\newtheorem{theorem}{Theorem}[section]
\newtheorem{lemma}{Lemma}[section]
\newtheorem{remark}{Remark}[section]
\newtheorem{assumption}{Assumption}[section]
\numberwithin{equation}{section}
\newcommand{\Rd}{\mathbb{R}^d}
\newcommand{\ve}{\varepsilon}
\newcommand{\psk}{\psi_{(\sigma^k)}}
\newcommand{\mypsi}{\psi_{(\sigma^i)}}
\newcommand{\pxsop}{\frac{\psi_{(\xi)}^2}{\psi}}
\newcommand{\pxsops}{\frac{\psi_{(\xi)}^2}{\psi^2}}
\newcommand{\pxtsops}{\frac{\psi_{(\xi_t)}^2}{\psi^2}}
\begin{document}

\title[Regularity of fully nonlinear elliptic PDEs in domains]{A probabilistic approach to interior regularity of fully nonlinear degenerate elliptic equations in smooth domains}
\author{Wei Zhou}
\address{127 Vincent Hall, 206 Church St. SE, Minneapolis, MN 55455}

\maketitle
\let\thefootnote\relax\footnotetext{AMS Subject Classification Numbers: Primary 60H30; Secondary 60J60, 35J60, 35J70, 35B65, 93E20, 49L20.}
\let\thefootnote\relax\footnotetext{Institutional Affiliation: School of Mathematics, University of Minnesota}
\let\thefootnote\relax\footnotetext{Mailing Address: 127 Vincent Hall, 206 Church St. SE, Minneapolis, MN 55455}
\let\thefootnote\relax\footnotetext{Email Address: zhoux123@umn.edu}
\let\thefootnote\relax\footnotetext{Phone: 1-612-625-3412}
\begin{abstract}
\noindent
We consider the value function of a stochastic optimal control of degenerate diffusion processes in a domain $D$. We study the smoothness of the value function, under the assumption of the non-degeneracy of the diffusion term along the normal to the boundary and an interior condition weaker than the non-degeneracy of the diffusion term. When the diffusion term, drift term, discount factor, running payoff and terminal payoff are all in the class of $C^{1,1}(\bar D)$, the value function turns out to be the unique solution in the class of $C_{loc}^{1,1}(D)\cap C^{0,1}(\bar D)$ to the associated degenerate Bellman equation with Dirichlet boundary data. Our approach is probabilistic.
\end{abstract}

\section{Introduction}

We consider the Dirichlet problem for the Bellman equation 
\begin{equation}
\left\{
\begin{array}{rcll}
\displaystyle\sup_{\alpha\in A}\big[L^\alpha v(x)-c(\alpha, x)v(x)+f(\alpha,x)\big]&=&0 &\text{in } D\\
v&=&g &\text{on }\partial D,
\end{array}
\right.  \label{1c}
\end{equation}
where $L^\alpha v(x):=a^{ij}(\alpha, x)v_{x^ix^j}(x)+ b^i(\alpha, x)v_{x^i}(x)$, and summation convention of repeated indices is understood. 
On the one hand, it is known that under appropriate conditions the Dirichlet problem for the fully nonlinear convex elliptic equation
\begin{equation}
\left\{
\begin{array}{rcll}
F\big(v_{x^ix^j}(x), v_{x^{i}}(x),v(x),x\big)&=&0&\text{in } D\\
v&=&g&\text{on }\partial D
\end{array}
\right.
\end{equation}
can be rewritten as a Bellman equation in the form of (\ref{1c}).
On the other hand, under suitable regularity assumptions on $a, b, c, f, g$ and $D$, the Bellman equation (\ref{1c}) is satisfied by the value function
\begin{equation}\label{1d}
v(x)=\sup_{\alpha\in\mathfrak{A}}v^\alpha(x),
\end{equation}
where
\begin{equation}
v^\alpha(x)=E\bigg[g\big(x^{\alpha,x}_{\tau^{\alpha,x}}\big)e^{-\phi^{\alpha,x}_{\tau^{\alpha,x}}}+\int_0^{\tau^{\alpha,x}}f^{\alpha_s}\big(x_s^{\alpha,x}\big)e^{-\phi_s^{\alpha,x}}ds\bigg]   ,
\end{equation}
$$\text{with }\phi_t^{\alpha,x}=\int_0^t c^{\alpha_s}(x_s^{\alpha,x})ds,$$
in a control problem associated with the family of It\^o equations
\begin{equation}
x_t^{\alpha,x}=x+\int_0^t\sigma^{\alpha_s}(x_s^{\alpha,x})dw_s+\int_0^tb^{\alpha_s}(x_s^{\alpha,x})ds,
\end{equation}
where $\tau^{\alpha,x}$ is the first exit time of $x_t^{\alpha,x}$ from $D$.

However, in general, $v$ defined by (\ref{1d}) is not sufficiently smooth, or even continuous, so $v$ in (\ref{1d}) is known as a probabilistic solution to (\ref{1c}). We are interested in understanding under what conditions, $v$ given by (\ref{1d}) is twice differentiable and is the unique solution of (\ref{1c}) in an appropriate sense. The main difficulties in dealing with this problem are the fully nonlinearity, the degeneracy of the operator, the infiniteness of the time horizon  and the non-vanishing boundary condition.

The results stated and proved here are closely related to those obtained by M.~V.~Safonov \cite{MR0445110} (1977), \cite{MR496596} (1978); P.-L.~Lions \cite{MR725360} (1983) and N.~V.~Krylov \cite{MR992979} (1989). In \cite{MR0445110} and \cite{MR496596}, the domain $D$ is two-dimensional, and the arguments are based on the fact that the controlled processes are in a plane region. In \cite{MR725360}, the regularity results are proved by a combination of probabilistic and PDE arguments, which heavily rely on the assumption that the discount coefficient $c^\alpha(x)$ is sufficiently large to bound first derivatives of $\sigma^\alpha(x)$ and $b^\alpha(x)$. In \cite{MR992979}, the boundary data $g$ is assumed to be of class $C^4$, and under certain assumptions, it is proved that $v$ has second derivatives bounded up to the boundary. The results are obtained in a purely probabilistic approach by introducing and using quasiderivatives and a reduction of controlled processes in a domain to controlled processes on a surface without boundary in the space having four more dimensions. 

In this article, under a more general setting, we give sufficient conditions under which the first and second derivatives of $v$ given by (\ref{1d}) exist almost everywhere in $D$, which implies the existence and uniqueness for the associated Dirichlet problem (\ref{1c}). Moreover, since we assume that the boundary data $g\in C^{k-1,1}(\bar D)$ when we investigate the existence of the $k$-th order derivatives of $v$, where $k=1,2$, the derivatives of $v$, if they do exist (a.e.), may not be bounded up to the boundary. Therefore, we also estimate the first and second derivatives. 

The main result is stated in Section 2, and the proof is given in Section 3. Our approach is  probabilistic by using quasiderivatives. However, to deal with the boundary, instead of adding four more dimensions, we construct two families of local supermartingales to bound the moments of quasiderivatives near the boundary and in the interior of the domain, respectively. For the background and motivations of quasiderivative method, we refer to \cite{MR2144644,quasi-linear} and the references therein.

To conclude this section, we introduce the notation: For $k=1,2$, let $C^k(\bar{D})$ be the space of $k$-times continuously differentiable functions in $\bar{D}$ with finite norm given by
\[ |g|_{1,D}=|g|_{0,D}+|g_x|_{0,D},\ \  |g|_{2,D}=|g|_{1,D}+|g_{xx}|_{0,D},\]
respectively, where 
\[|g|_{0,D}=\sup_{x\in D}|g(x)|,\]
$g_x$ is the gradient vector of $g$, and $g_{xx}$ is the Hessian matrix of $g$. For $\beta\in(0,1]$, the H\"older spaces $C^{k,\beta}(\bar D)$ are defined as the subspaces of $C^k(\bar D)$ consisting of functions with finite norm
$$|g|_{k,\beta,D}=|g|_{k,D}+[g]_{\beta,D},\ \ \mbox{ with }[g]_{\beta, D}=\sup_{x,y\in D}\frac{|g(x)-g(y)|}{|x-y|^\beta}.$$
$\Rd$ is the $d$-dimensional Euclidean space with $x = (x^1, x^2, . . . , x^d )$ representing a typical point in $\Rd$, and $(x, y) =\sum_{i=1}^d x^iy^i$ is the inner product for $x, y \in \Rd$. For $x,y,z\in\Rd$, set
\begin{align*}
u_{(y)}=&\sum_{i=1}^d u_{x^i}y^i,\ \  u_{(y)(z)}=\sum_{i,j=1}^d u_{x^i x^j}y^i z^j,
\end{align*}
$$u_{(y)}^2=(u_{(y)})^2.$$
For any matrix $\sigma=(\sigma^{ij})$,
$$\|\sigma\|^2:=\mathrm{tr}\sigma\sigma^*=\sum_{i,j}(\sigma^{ij})^2.$$
We also use the notation
\[s\wedge t=\min(s,t),\ \  s \vee t=\max(s,t).\]
Constants $K, M$ and $N$ appearing in inequalities are usually not indexed. They may differ even in the same chain of inequalities.

\section{Main results}

Assume that $(\Omega,\mathcal{F},P)$ is a complete probability space and $\{\mathcal{F}_t;t \ge 0\}$ an increasing filtration of $\sigma $-algebras $\mathcal{F}_t \subset \mathcal{F}$ which are complete with respect to $\mathcal{F},P$. Let $(w_t ,\mathcal{F}_t ;t \ge 0)$ be a $d_1$-dimensional Wiener process on $(\Omega,\mathcal{F},P)$.

Let $A$ be a separable metric space. Suppose that the following have been defined for each $\alpha \in A$ and $x \in \Rd$: a $d \times d_1$ matrix $\sigma^\alpha(x)$, a $d$-dimensional vector $b^\alpha(x)$ and real scalars $c^\alpha(x)\geq0$ and $f^\alpha(x)$. We assume that $\sigma$, $b$, $c$ and $f$ are Borel measurable on $A\times \Rd$, and $g(x)$ is a Borel measurable function on $\Rd$. We also assume that $\sigma^\alpha$ , $b^\alpha$, $c^\alpha$ and their first and second derivatives are all continuous in $x$ uniformly with respect to $\alpha$.

Let $D\in C^4$ be a bounded domain in $\Rd$, then there exists a function $\psi\in C^4$ satisfying
$$
\psi>0 \mbox{ in }D,\ \ \psi=0\mbox{ and } |\psi_x|\ge1 \mbox{ on }\partial D.
$$
Additionally, we assume that
$$
\sup_{\alpha\in A}L^\alpha\psi\le-1\mbox{ in }D,
$$
with
$$
L^\alpha:=(a^\alpha)^{ij}(x)\frac{\partial^2}{\partial x^i\partial x^j}+(b^\alpha)^i(x)\frac{\partial}{\partial x^i},
$$
where $a=1/2(\sigma\sigma^*)$. We also assume that 
\begin{equation}\label{tiao}
|(\sigma^\alpha)^{ij}|_{2,D}+|(b^\alpha)^i|_{2,D}+|c^\alpha|_{2,D}+|\psi|_{4,D}\le K_0,
\end{equation}
$$
\forall \alpha\in A, 1\le i\le d, 1\le j\le d_1,
$$
with $K_0\in[1,\infty)$, not depending on $\alpha$.

By $\mathfrak{A}$, we denote the set of all functions 
$\alpha_r(\omega)$ on $\Omega\times[0,\infty)$ which are $\mathcal{F}_r$-adapted and measurable in $(\omega, r)$ with values in $A$.

For $\alpha\in\mathfrak{A}$ and $x\in D$, we consider the It\^o equation
\begin{equation}\label{itox}
x_t^{\alpha,x}=x+\int_0^t\sigma^{\alpha_s}(x_s^{\alpha,x})dw_s+\int_0^tb^{\alpha_s}(x_s^{\alpha,x})ds.
\end{equation}
The solution of this equation is known to exist and to be unique by our assumptions on $\sigma^\alpha$ and $b^\alpha$.

Let $\tau^{\alpha,x}$ be the first exit time of $x_t^{\alpha,x}$ from $D$:
$$
\tau^{\alpha,x}=\inf\{t\ge0:x_t^{\alpha,x}\notin D\}.
$$

For any $t\ge 0$, we define
$$
\phi_t^{\alpha,x}=\int_0^tc^{\alpha_s}(x_s^{\alpha,x})ds.
$$

Set
\begin{equation}
v(x)=\sup_{\alpha\in\mathfrak{A}}v^\alpha(x),\label{v}
\end{equation}
with
\begin{equation}\label{vax}
v^\alpha(x)=E^\alpha_x\bigg[g\big(x_\tau\big)e^{-\phi_{\tau}}+\int_0^{\tau}f^{\alpha_s}\big(x_s\big)e^{-\phi_s}ds\bigg] , 
\end{equation}
where we use common abbreviated notation, according to which we put the indices $\alpha$ and $x$ beside the expectation sign instead of explicitly exhibiting them inside the expectation sign for every object that can carry all or part of them. Namely, 
$$\begin{gathered}
E^\alpha_x\bigg[g\big(x_\tau\big)e^{-\phi_{\tau}}+\int_0^{\tau}f^{\alpha_s}\big(x_s\big)e^{-\phi_s}ds\bigg]\\
=E\bigg[g\big(x^{\alpha,x}_{\tau^{\alpha,x}}\big)e^{-\phi^{\alpha,x}_{\tau^{\alpha,x}}}+\int_0^{\tau^{\alpha,x}}f^{\alpha_s}\big(x_s^{\alpha,x}\big)e^{-\phi^{\alpha,x}_s}ds\bigg] .
\end{gathered}$$

The value function $v(x)$ given by (\ref{v}) and (\ref{vax}) is the probabilistic solution of the Dirichlet problem for the Bellman equation:

\begin{equation}\label{bellman}
\left\{
\begin{array}{rcll}
\displaystyle\sup_{\alpha\in A}\big[L^\alpha v-c^\alpha v+f^\alpha\big]&=&0 &\text{in } D\\
v&=&g &\text{on }\partial D.
\end{array}
\right. 
\end{equation}

Define
\begin{equation}\label{condition}
\mu(x,\xi):=\inf_{\zeta: (\xi,\zeta)=1}\sup_{\alpha\in A}a^{ij}(\alpha,x)\zeta^i\zeta^j,
\end{equation}
\begin{equation}\label{conditions}
\mu(x):=\inf_{|\zeta|=1}\sup_{\alpha\in A}a^{ij}(\alpha,x)\zeta^i\zeta^j.
\end{equation}

The condition $\mu(x,\xi)>0$ means that $v_{(\xi)(\xi)}(x)$ is actually ``present" in the Bellman equation in (\ref{bellman}). More precisely, for any fixed $x\in D$ and $\xi\in \Rd\setminus\{ 0\}$, $\mu(x,\xi)>0$ if and only if there exists a control $\alpha\in A$ such that the corresponding diffusion matrix $a^\alpha(x)$ is non-degenerate in the direction $\xi$. For example, consider the linear equation
\begin{equation}\label{exa}
u_{x^1x^1}+2u_{x^1x^2}+u_{x^2x^2}=0.
\end{equation}
By (\ref{condition}), here
$$\mu(x,\xi)=\inf_{(\xi,\zeta)=1}(\zeta^1+\zeta^2)^2.$$
$\mu(x,\xi)>0$ if and only if $\xi\parallel\xi_0=(1,1)$. So only $u_{(\xi_0)(\xi_0)}$ is ``present'' in (\ref{exa}). In fact, the equation (\ref{exa}) can be rewritten as $$u_{(\xi_0)(\xi_0)}=0,$$ so that no other second-order derivatives is actually ``present'' in the equation, even though $u_{x^1x^1}$ and $u_{x^2x^2}$ exist explicitly in (\ref{exa}).

Also, it is not hard to see that
$$\mu(x)=\inf_{|\xi|=1}\mu(x,\xi).$$
Note that we have $\mu(x)>0$ at a point $x$ if and only if for any $\xi\ne0$, there exists a control $\alpha\in A$, such that the corresponding diffusion term $a^\alpha(x)$ is non-degenerate in the direct of $\xi$.

Let $\mathfrak{B}$ be the set of all skew-symmetric $d_1\times d_1$ matrices. For any positive constant $\lambda$, define
$$D_\lambda=\{x\in D: \psi(x)>\lambda\}.$$

\begin{assumption}\label{nondeg}
(uniform non-degeneracy along the normal to the boundary) There exists a positive constant $\delta_0$, such that
\begin{equation}
(a^\alpha n,n)\ge\delta_0 \mbox{ on }\partial D, \forall\alpha\in A,
\end{equation}
where $n$ is the unit normal vector.
\end{assumption}
\begin{assumption}\label{inequality}
(interior condition to control the moments of quasiderivatives, weaker than the non-degeneracy) There exist a function $\rho^\alpha(x): A\times D\rightarrow\Rd$, bounded on every set in the form of $A\times D_\lambda$ for all $\lambda>0$, a 
function $Q^\alpha(x,y): A\times D\times\Rd\rightarrow\mathfrak B$, bounded with respect to $(\alpha, x)$ on every set in the form of $A\times D_\lambda$ for all $\lambda>0, y\in\Rd$ and linear in $y$,  and a function $M^\alpha(x): A\times D\rightarrow\mathbb R$, bounded on every set in the form of $A\times D_\lambda$ for all $\lambda>0$, such that for any $\alpha\in A$, $x\in D$ and $|y|=1$,
\begin{equation}\label{ineq}
\begin{gathered}
\big\|\sigma^\alpha_{(y)}(x)+(\rho^\alpha(x),y)\sigma^\alpha(x)+\sigma^\alpha(x)Q^\alpha(x,y)\big\|^2+\\
\ 2\big(y,b^\alpha_{(y)}(x)+2(\rho^\alpha(x),y)b^\alpha(x)\big)\le c^\alpha(x)+M^\alpha(x)\big(a^\alpha(x)y,y\big).
\end{gathered}
\end{equation}

\end{assumption}

Our main result is the following:

\begin{theorem}\label{4c} 
Suppose that Assumptions \ref{nondeg} and \ref{inequality} hold. 
\begin{enumerate}
\item If for any $\alpha\in A$, $f^\alpha, g\in C^{0,1}(\bar{D})$, satisfying
$$\sup_{\alpha\in A}|f^\alpha|_{0,1,D}+|g|_{0,1,D}\le K_0,$$
then $v\in C^{0,1}(D)$, and for any $\xi\in \Rd$,
\begin{equation}
\big|v_{(\xi)}(x)\big|\le N\bigg(|\xi|+\frac{|\psi_{(\xi)}|}{\psi^{\frac{1}{2}}}\bigg), a.e. \mbox{ in }D,\label{4d}
\end{equation}
where the constant $N$ depends only on $d$, $d_1$ and $K_0$.
\item If for any $\alpha\in A$, $f^\alpha\in C^{0,1}(\bar{D}), g\in C^{1,1}(\bar{D})$, satisfying
$$\sup_{\alpha\in A}|f^\alpha|_{0,1,D}+|g|_{1,1,D}\le K_0,$$
and $f^\alpha+K_0|x|^2$ is convex,
then for any $\xi\in \Rd$, 
\begin{equation}
 v_{(\xi)(\xi)}(x)\ge -N\bigg(|\xi|^2+\frac{\psi_{(\xi)}^2}{\psi}\bigg), a.e. \mbox{ in }D,\label{4dd}
\end{equation}
\begin{equation}\label{4ddd}
v_{(\xi)(\xi)}(x)\le \mu(x,{\xi}/{|\xi|})^{-1}N\frac{|\xi|^2}{\psi}, a.e. \mbox{ in }D(\xi),
\end{equation}
where $D(\xi):=\{x\in D: \mu(x,\xi)>0\}$, and the constant $N$ depends only on $d$, $d_1$ and $K_0$. 

\item If $\mu(x)>0$ in $D$, then $v\in C_{loc}^{1,1}(D)$. In addition, $v$ given by (\ref{vax}) is the unique solution in $C_{loc}^{1,1}(D)\cap C^{0,1}(\bar D)$ of
\begin{equation}
\left\{
\begin{array}{rcll}
\displaystyle\sup_{\alpha\in A}\big[L^\alpha v(x)-c(\alpha, x)v(x)+f(\alpha,x)\big]&=&0 &\text{a.e. in } D\\
v&=&g &\text{on }\partial D.
\end{array}
\right.  \label{bellmanae}
\end{equation}
\end{enumerate}
\end{theorem}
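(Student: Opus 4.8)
The plan is to argue probabilistically throughout, via quasiderivatives of the controlled processes $x_t^{\alpha,x}$: every derivative bound for $v$ is reduced to a moment bound for quasiderivative processes, and those bounds are extracted from two families of local supermartingales -- an interior one built on the sign condition (\ref{ineq}) of Assumption \ref{inequality}, and a boundary one built on Assumption \ref{nondeg} and the barrier $\psi$. Preliminarily $v$ is bounded: $L^\alpha\psi\le-1$ makes $\psi(x_t^{\alpha,x})+t$ a supermartingale up to $\tau^{\alpha,x}$, so $E^\alpha_x\tau\le\psi(x)\le K_0$, $\tau<\infty$ a.s., and $|v|\le N$; the $C^2$-bounds on $\sigma,b,c$ in (\ref{tiao}) are what make the quasiderivative SDEs (whose coefficients involve $\sigma^\alpha_{(y)},b^\alpha_{(y)}$ and their derivatives) well posed and estimable. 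For assertion (1), fix $x_0\in D$, $\xi_0\in\Rd$, $\alpha\in\mathfrak A$, and use $\rho^\alpha,Q^\alpha$ from Assumption \ref{inequality} as ``gauges'' to define the first quasiderivative $\xi_t$ in the direction $\xi_0$ together with the auxiliary scalar process $\pi_t$ produced by differentiating $e^{-\phi_t}$ and the change of control; these solve a closed linear system driven by $w$, with $\xi_0, 0$ as initial data. The construction yields a representation, schematically $v^\alpha(x_0+\ve\xi_0)=v^{\alpha_\ve}(x_0)+\ve\,E[e^{-\phi_\tau}(g_{(\xi_\tau)}(x_\tau)+\pi_\tau g(x_\tau))+\int_0^\tau e^{-\phi_s}(f^{\alpha_s}_{(\xi_s)}(x_s)+\pi_s f^{\alpha_s}(x_s)+\cdots)\,ds]+o(\ve)$, with $\alpha_\ve$ an order-$\ve$ perturbation of $\alpha$ encoded by $\rho^\alpha,Q^\alpha$. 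Inequality (\ref{ineq}) is exactly the condition rendering the It\^o drift of $\pi_t^2e^{-2\phi_t}|\xi_t|^2$ (after multiplying by an exponential weight built from $M^\alpha$) nonpositive; since $\rho^\alpha,Q^\alpha,M^\alpha$ are bounded only on each $A\times D_\lambda$, this gives $E[\pi^2_{t\wedge\vartheta_\lambda}e^{-2\phi_{t\wedge\vartheta_\lambda}}|\xi_{t\wedge\vartheta_\lambda}|^2]\le N|\xi_0|^2$, $\vartheta_\lambda$ the exit time of $x_t$ from $D_\lambda$. On the complementary strip Assumption \ref{nondeg} forces $(a^\alpha\psi_x,\psi_x)$ bounded below near $\partial D$, and from this one builds a second local supermartingale, incorporating a power of $\psi(x_t)$, whose finiteness at $\tau$ yields moments of $|\xi_\tau|,\pi_\tau$ of size $O(|\xi_0|+|\psi_{(\xi_0)}(x_0)|\psi(x_0)^{-1/2})$. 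Inserting these and $|g_x|,|f_x|\le K_0$ into the representation, using $v^{\alpha_\ve}(x_0)\le v(x_0)$, running the symmetric expansion with $x_0$ and $x_0+\ve\xi_0$ interchanged, and taking $\sup_\alpha$ with near-optimal controls, one sandwiches the difference quotients of $v$, giving $v\in C^{0,1}(D)$ and (\ref{4d}).

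For assertion (2), run in addition the second quasiderivative $\eta_t$, a linear SDE forced by $\xi_t$; the second-order increment $v^\alpha(x_0+\ve\xi)+v^\alpha(x_0-\ve\xi)-2v^\alpha(x_0)$ has a representation in which the odd terms cancel and there remain $g_{(\xi_\tau)(\xi_\tau)}(x_\tau)$, $\int_0^\tau e^{-\phi_s}f^{\alpha_s}_{(\xi_s)(\xi_s)}(x_s)\,ds$, expressions quadratic in $(\xi_t,\pi_t)$, and a term linear in $\eta_\tau$, all of whose moments are controlled by the same two supermartingales (now one uses their exponential weights and the second derivatives of $\sigma,b,c$). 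Since $g\in C^{1,1}(\bar D)$ and $f^\alpha+K_0|x|^2$ is convex, $g_{(\xi)(\xi)}\ge-K_0|\xi|^2$ and $f^\alpha_{(\xi)(\xi)}\ge-2K_0|\xi|^2$ pointwise, so this increment is $\ge-N\ve^2(|\xi|^2+\psi_{(\xi)}^2/\psi)+o(\ve^2)$, i.e.\ $v^\alpha_{(\xi)(\xi)}(x_0)\ge-N(|\xi|^2+\psi_{(\xi)}^2/\psi)$ for each $\alpha$; since a supremum of functions obeying such a one-sided second-difference bound obeys it as well, (\ref{4dd}) follows for $v$, and in particular $v$ is locally semiconvex, hence twice differentiable a.e.\ (Aleksandrov). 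For the upper bound (\ref{4ddd}) we use the supersolution half of the dynamic programming principle, $v(x)\ge E^\alpha_x[v(x_\theta)e^{-\phi_\theta}+\int_0^\theta f^{\alpha_s}e^{-\phi_s}ds]$ for constant controls and small stopping times $\theta$: at a twice-differentiable point $x\in D$ this yields $a^{ij}(\alpha,x)v_{x^ix^j}(x)\le c^\alpha(x)v(x)-f^\alpha(x)-b^i(\alpha,x)v_{x^i}(x)$ for every $\alpha$, with right side of size $O(\psi(x)^{-1/2})$, hence $O(\psi(x)^{-1})$, by (\ref{4d}). Choosing, as in the definition (\ref{condition}) of $\mu(x,\xi)$, controls for which $a^\alpha(x)$ is non-degenerate in the direction $\xi/|\xi|$, and using (\ref{4dd}) to control the contributions of the complementary directions of $v_{xx}(x)$, one isolates $v_{(\xi)(\xi)}(x)$ and derives (\ref{4ddd}) on $D(\xi)$.

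For assertion (3), if $\mu(x)>0$ throughout $D$ then on any compact $K\subset D$ one has $\mu(x,\zeta)\ge\mu_K>0$ for all $|\zeta|=1$, so $K\subset D(\xi)$ for every $\xi\ne0$, and (\ref{4dd})--(\ref{4ddd}) (whose right sides are bounded on $K$) give $\|v_{xx}\|_{L^\infty(K)}\le N_K$; hence $v\in C^{1,1}_{loc}(D)$. For $v\in C^{0,1}(\bar D)$, subtract a $C^{1,1}(\bar D)$-extension $\tilde g$ of $g$: It\^o's formula applied to $\tilde g(x_t)e^{-\phi_t}$ rewrites $v^\alpha$ as $\tilde g(x)$ plus the value of a control problem with zero boundary data and the bounded running cost $L^\alpha\tilde g-c^\alpha\tilde g+f^\alpha$, whose value has gradient bounded up to $\partial D$ (bounded source, barrier $\psi$, Assumption \ref{nondeg}), so $v\in C^{0,1}(\bar D)$ with $v=g$ on $\partial D$. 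That $v$ solves (\ref{bellmanae}) a.e.\ is then standard: ``$\le 0$'' is the a.e.\ form of the dynamic programming inequality used above, and ``$\ge 0$'' comes from an $\ve$-optimal measurable selection $\bar\alpha(\cdot)$ and the associated (approximate) Markov control, via It\^o again. Finally, uniqueness within $C^{1,1}_{loc}(D)\cap C^{0,1}(\bar D)$: for any such solution $v$, apply It\^o to $v(x_t^{\alpha,x})e^{-\phi_t}$ on $[0,\tau\wedge\vartheta_{1/n}]$ and use (\ref{bellmanae}) to get $v(x)\ge E[v(x_{\tau\wedge\vartheta_{1/n}})e^{-\phi_{\tau\wedge\vartheta_{1/n}}}]+E[\int_0^{\tau\wedge\vartheta_{1/n}}f^{\alpha_s}e^{-\phi_s}ds]$; letting $n\to\infty$ (using $\vartheta_{1/n}\uparrow\tau<\infty$, path-continuity, $v\in C(\bar D)$, $E\tau<\infty$) gives $v(x)\ge v^\alpha(x)$ for all $\alpha\in\mathfrak A$, hence $v\ge\sup_\alpha v^\alpha$; running the same computation with the $\ve$-optimal control gives the reverse inequality, so $v$ coincides with the value function (\ref{vax}), which is therefore the unique solution.

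I expect the principal obstacle to be the construction and verification of the two families of local supermartingales. Choosing the gauges $\rho^\alpha,Q^\alpha$ and the weights so that the It\^o drift of the interior process becomes nonpositive is precisely what Assumption \ref{inequality} is tailored for (it has to be applied along the random trajectory with $y=\xi_t/|\xi_t|$), and dovetailing this with the boundary supermartingale built from $\psi$ and Assumption \ref{nondeg} so that together they bound the relevant moments of $\xi_t,\eta_t,\pi_t$ with the sharp $\psi$-weights, uniformly over $\alpha\in\mathfrak A$, is the technical heart of the argument. Secondary difficulties are the linear-algebra step isolating $v_{(\xi)(\xi)}$ in (\ref{4ddd}) from the elliptic combination using (\ref{4dd}), and the measurable-selection argument behind the ``$\ge 0$'' half of (\ref{bellmanae}).
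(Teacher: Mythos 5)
Your treatment of assertions (1) and (2) follows essentially the same route as the paper: difference quotients of $v$ via the dynamic programming principle, first and second quasiderivatives whose moments are controlled by two families of local supermartingales (an interior one on $D_{\lambda^2}$ driven by Assumption \ref{inequality} with $r_t=(\rho(x_t),\xi_t)$, $P_t=Q(x_t,\xi_t)$, $\pi_t=\tfrac{M}{2}\sigma^*\xi_t$, and a boundary one on the strip $D_\delta^\lambda$ built from $\psi$ and Assumption \ref{nondeg}), the convexity of $f^\alpha+K_0|x|^2$ and $g\in C^{1,1}$ for the one-sided bound (\ref{4dd}), and then the elliptic inequality $\mathrm{tr}(a^\alpha v_{xx})\le c^\alpha v-f^\alpha-b^iv_{x^i}$ combined with the semiconvexity bound to isolate $v_{(\xi)(\xi)}$ and obtain (\ref{4ddd}) (the paper makes your ``linear-algebra step'' precise by setting $V=v_{xx}+(N/\psi+1)I$ and using $\mathrm{tr}(a^\alpha V)\ge(V\xi,\xi)(a^\alpha\zeta,\zeta)$ with $\zeta=\sqrt{V}\,|\sqrt{V}\xi|^{-2}\sqrt{V}\xi$, so that $(\xi,\zeta)=1$ and $\mu(x,\xi)$ enters by its definition). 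Two sketch-level imprecisions worth noting: the interior supermartingale is $e^{-\phi_t}|\xi_t|^2$ (and its square root), not $\pi_t^2e^{-2\phi_t}|\xi_t|^2$; and the passage from the two subdomain supermartingale estimates to (\ref{4d}) is not a direct ``insertion of moments'' but a pair of self-improving inequalities of the form $|v_{(\xi)}|/\sqrt{\mathrm B_i(x,\xi)}\le\sup_{\partial}|v_{(\zeta)}|/\sqrt{\mathrm B_i(\cdot,\zeta)}+N$ on $D_\delta^\lambda$ and $D_{\lambda^2}$ that must be combined across the overlapping boundaries $\{\psi=\lambda\}$, $\{\psi=\lambda^2\}$, together with the normal-derivative bound of Lemma \ref{lemma4} at $\partial D$; you correctly identify this dovetailing as the technical heart, but it is where all the work is.

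For assertion (3) your route genuinely diverges from the paper's, and this is where there is a gap. The direction $\tilde v\ge v^\alpha$ for every $\alpha\in\mathfrak A$ (hence $\tilde v\ge v$) by It\^o--Krylov on $[0,\tau\wedge\vartheta_{1/n}]$ is fine. The reverse direction, ``running the same computation with the $\ve$-optimal control,'' does not go through as stated: for a given solution $\tilde v\in C^{1,1}_{loc}(D)\cap C^{0,1}(\bar D)$, a measurable selection only produces a feedback map $x\mapsto\bar\alpha(x)$ with $L^{\bar\alpha(x)}\tilde v-c^{\bar\alpha(x)}\tilde v+f^{\bar\alpha(x)}\ge-\ve$, and the It\^o equation with the merely measurable, degenerate coefficients $\sigma^{\bar\alpha(x)}(x)$, $b^{\bar\alpha(x)}(x)$ need not admit a solution, nor does $\bar\alpha(x_t)$ automatically define an element of $\mathfrak A$. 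This is precisely the classical obstruction to the verification proof of uniqueness for Bellman equations. The paper avoids it entirely: it quotes Krylov's theorem for the fact that the value function solves (\ref{bellmanae}), and proves uniqueness by a comparison argument, introducing $\Psi(x,t)=\ve(1+\psi(x))\Lambda e^{-\delta t}$ and $V_i(x,t)=v_i(x)e^{-\ve t}$, checking that $F[V_1-\Psi]\ge0$ and $F[V_2+\Psi]\le0$ for the parabolic Bellman operator $F$ with appropriate boundary and terminal inequalities, and invoking the known comparison principle to get $V_1-V_2\le2\Psi$ before letting $\ve\downarrow0$. To repair your argument you would either need to reproduce that comparison machinery or justify the $\ve$-optimal control step by an approximation with piecewise-constant (in time) controls; as written, the uniqueness half of (3) is not proved.
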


We emphasize that the constants $N$ in (\ref{4d}), (\ref{4dd}) and (\ref{4ddd}) are independent of $\rho^\alpha, Q^\alpha$ and $M^\alpha$ in (\ref{ineq}).

\begin{remark}
The author doesn't know whether the estimates (\ref{4d}), (\ref{4dd}) and (\ref{4ddd}) are sharp.
\end{remark}

\begin{remark}
Refer to Remark 3.2 in \cite{quasi-linear} to see why Assumption \ref{inequality} is necessary under Assupmtion \ref{nondeg} and how to take advantage of the parameters $\rho^\alpha, Q^\alpha$ and $M^\alpha$ in (\ref{ineq}).
\end{remark}

\section{Auxiliary Convergence Results}

Let $U$ be a connected open subset in $\Rd$. Assume that, for any $\alpha\in \mathfrak A,\omega\in\Omega, t\ge0$, and $x\in U$, we are given a $d\times d_1$ matrix $\kappa_t^\alpha(x)$ and a $d$-dimensional vector $\nu_t^\alpha(x)$. We assume that $\kappa_t^\alpha$ and $\nu_t^\alpha$ are continuous in $x$ for any $\alpha,\omega,t$, measurable in $(\omega,t)$ for any $\alpha,x$, and $\mathcal{F}_t$-measurable in $\omega$ for any $\alpha,t,x$. 
Assume that for any $\alpha\in\mathfrak{A}$, the It\^o equation 
\begin{equation}\label{itozeta}
d\zeta^{\alpha,\zeta}_t=\kappa_t^{\alpha}(\zeta^{\alpha,\zeta}_t)dw_t+\nu_t^{\alpha}(\zeta^{\alpha,\zeta}_t)dt
\end{equation}
has a unique solution.

We suppose that for an $\epsilon_0\in(0,1]$ and for each $\epsilon\in[0,\epsilon_0]$, we are given
$$\kappa_t^\alpha(\epsilon)=\kappa_t^\alpha(x,\epsilon),\qquad\nu_t^\alpha(\epsilon)=\nu_t^\alpha(x,\epsilon)$$
having the same meaning and satisfying the same assumptions as those of $\kappa_t^\alpha$ and $\nu_t^\alpha$. Assume that for any $\alpha\in\mathfrak{A}$, the It\^o equation (\ref{itozeta}) corresponding to $\kappa_t^\alpha(\epsilon)$ and $\nu_t^\alpha(\epsilon)$ with initial condition $\zeta(\epsilon)\in U$ 
\begin{equation}\label{itozetae}
d\zeta^{\alpha,\zeta(\epsilon)}_t(\epsilon)=\kappa_t^\alpha(\zeta^{\alpha,\zeta(\epsilon)}_t(\epsilon),\epsilon)dw_t+\nu_t^\alpha(\zeta^{\alpha,\zeta(\epsilon)}_t(\epsilon),\epsilon)dt
\end{equation}
has a unique solution denoted by $\zeta^{\alpha,\zeta(\epsilon)}_t(\epsilon)$. 

\begin{lemma}\label{momentlemma} Let $q\in[2,\infty)$, $\theta\in(0,1)$, $M\in[0,\infty)$ be constants and $M^\alpha_t$ be a $\mathcal{F}_t$-adapted nonnegative process for any $\alpha\in \mathfrak{A}$.
\begin{enumerate}
\item If for any $\alpha\in \mathfrak A,t\ge0,x\in U$,
\begin{equation}\label{condone}
\|\kappa_t^\alpha(x)\|+|\nu_t^\alpha(x)|\le M|x|+M^\alpha_t,
\end{equation}
then for any bounded stopping times $\gamma^\alpha\le\tau^{\alpha,\zeta}_U$, $\forall \alpha$
\begin{equation}\label{ieone}
\begin{gathered}
\sup_{\alpha\in\mathfrak A}E^\alpha_\zeta\sup_{t\le\gamma}e^{-Nt}|\zeta_t|^q\\
\le |\zeta|^q+(2q-1)\sup_{\alpha\in\mathfrak A}E^\alpha\int_0^\gamma M_t^qe^{-Nt}dt,
\end{gathered}
\end{equation}
\begin{equation}\label{ietwo}
\begin{gathered}
\sup_{\alpha\in\mathfrak A}E^\alpha_\zeta\sup_{t\le\gamma}e^{-Nt}|\zeta_t|^{q\theta}\\
\le \frac{2-\theta}{1-\theta}\bigg(|\zeta|^{q\theta}+(2q-1)^\theta\sup_{\alpha\in\mathfrak A}E^\alpha\Big(\int_0^\gamma M_t^qe^{-Nt}dt\Big)^\theta\bigg),
\end{gathered}
\end{equation}
where $N=N(q,M)$ is a sufficiently large constant.
\item If for any $\alpha\in \mathfrak A,t\ge0,x\in U$, and some $\epsilon\in [0,\epsilon_0]$,
\begin{equation}\label{condtwo}
\|\kappa_t^\alpha(x)-\kappa_t^\alpha(y,\epsilon)\|+|\nu_t^\alpha(x)-\nu_t^\alpha(y,\epsilon)|\le M|x-y|+\epsilon M_t^\alpha,
\end{equation}
then for any bounded stopping times $\gamma^\alpha\le\tau^{\alpha,\zeta}_U\wedge\tau^{\alpha,\zeta(\epsilon)}_U(\epsilon)$, $\forall \alpha$
\begin{equation}\label{iethree}
\begin{gathered}
\sup_{\alpha\in\mathfrak A}E\sup_{t\le\gamma^\alpha}e^{-Nt}|\zeta_t^{\alpha,\zeta(\epsilon)}(\epsilon)-\zeta_t^{\alpha,\zeta}|^q\\
\le |\zeta(\epsilon)-\zeta|^q+\epsilon^q(2q-1)\sup_{\alpha\in\mathfrak A}E^\alpha\int_0^\gamma M_t^qe^{-Nt}dt,
\end{gathered}
\end{equation}
\begin{equation}\label{iefour}
\begin{gathered}
\sup_{\alpha\in\mathfrak A}E\sup_{t\le\gamma^\alpha}e^{-Nt}|\zeta_t^{\alpha,\zeta(\epsilon)}(\epsilon)-\zeta_t^{\alpha,\zeta}|^{q\theta}\\
\le\frac{2-\theta}{1-\theta}\bigg( |\zeta(\epsilon)-\zeta|^{q\theta}+\epsilon^{q\theta}(2q-1)^\theta\sup_{\alpha\in\mathfrak A}E^\alpha\Big(\int_0^\gamma M_t^qe^{-Nt}dt\Big)^\theta\bigg),
\end{gathered}
\end{equation}
where $N=N(q,M)$ is a sufficiently large constant.
\end{enumerate}
\end{lemma}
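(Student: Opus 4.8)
The plan is to read off all four estimates from It\^o's formula applied to $e^{-Nt}|\cdot|^q$, to deduce part (2) from part (1) by passing to the difference of the two processes, and to deduce the fractional-exponent bounds $(\ref{ietwo})$, $(\ref{iefour})$ from the integer-exponent ones $(\ref{ieone})$, $(\ref{iethree})$ by a domination inequality of Lenglart type. As a preliminary reduction one localizes: replacing $\gamma$ by $\gamma\wedge\inf\{t:|\zeta_t|>n\}\wedge\inf\{t:\int_0^t(M^\alpha_s)^q\,ds>n\}$ makes every process below bounded, so the stochastic integrals are true martingales and all expectations are finite; the general case follows by letting $n\to\infty$, the two sides converging monotonically. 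Thus I assume henceforth that $|\zeta_t|$ and $\int_0^\gamma(M^\alpha_s)^q\,ds$ are bounded.

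\emph{Part (1).} Since $q\ge2$, $x\mapsto|x|^q$ is $C^2$ on $\Rd$, and It\^o's formula applied to $e^{-Nt}|\zeta_t^{\alpha,\zeta}|^q$ gives a stochastic differential $e^{-Nt}q|\zeta_t|^{q-2}(\zeta_t,\kappa^\alpha_t(\zeta_t)\,dw_t)+e^{-Nt}\Lambda_t\,dt$ in which, after the elementary bound $|\kappa^\alpha_t(\zeta_t)^{*}\zeta_t|^{2}\le|\zeta_t|^{2}\,\|\kappa^\alpha_t(\zeta_t)\|^{2}$, the drift satisfies $\Lambda_t\le-N|\zeta_t|^q+q|\zeta_t|^{q-1}|\nu^\alpha_t(\zeta_t)|+\tfrac{q(q-1)}{2}|\zeta_t|^{q-2}\|\kappa^\alpha_t(\zeta_t)\|^{2}$. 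Inserting $(\ref{condone})$ and splitting the mixed terms $|\zeta_t|^{q-1}M^\alpha_t$, $|\zeta_t|^{q-2}(M^\alpha_t)^2$ (and the ones arising from the $M|\zeta_t|$ part) by Young's inequality — with weights chosen so that everything free of $M^\alpha_t$ is swallowed by $-N|\zeta_t|^q$ once $N=N(q,M)$ is large and the surviving coefficient of $(M^\alpha_t)^q$ is exactly $2q-1$ — one obtains $\Lambda_t\le-\lambda|\zeta_t|^q+(2q-1)(M^\alpha_t)^q$ for some $\lambda=\lambda(q,M)>0$. Taking expectations at the bounded stopping time $\gamma$ and discarding the nonpositive $-\lambda$-term already yields the expectation form of $(\ref{ieone})$; the supremum over $t\le\gamma$ inside the expectation is recovered by combining this with a Burkholder--Davis--Gundy estimate of the martingale part, the retained integral $\lambda\int_0^\gamma e^{-Ns}|\zeta_s|^q\,ds$ — itself controlled by the right-hand side of $(\ref{ieone})$ once $N$ is enlarged — being used to absorb the stochastic-integral contribution. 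Finally, since the expectation form of $(\ref{ieone})$ holds along every bounded stopping time $\tau\le\gamma$, the process $e^{-Nt}|\zeta_t|^q$ is dominated, in the sense of Lenglart, by the increasing process $|\zeta|^q+(2q-1)\int_0^te^{-Ns}(M^\alpha_s)^q\,ds$, and Lenglart's inequality delivers $(\ref{ietwo})$ with the constant $\tfrac{2-\theta}{1-\theta}$, the two terms separating by the subadditivity of $r\mapsto r^\theta$.

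\emph{Part (2).} Here $\xi_t:=\zeta_t^{\alpha,\zeta(\epsilon)}(\epsilon)-\zeta_t^{\alpha,\zeta}$ solves an It\^o equation whose diffusion and drift coefficients at time $t$ are $\kappa^\alpha_t(\zeta_t^{\alpha,\zeta(\epsilon)}(\epsilon),\epsilon)-\kappa^\alpha_t(\zeta_t^{\alpha,\zeta})$ and $\nu^\alpha_t(\zeta_t^{\alpha,\zeta(\epsilon)}(\epsilon),\epsilon)-\nu^\alpha_t(\zeta_t^{\alpha,\zeta})$; for $t\le\gamma^\alpha\le\tau^{\alpha,\zeta}_U\wedge\tau^{\alpha,\zeta(\epsilon)}_U(\epsilon)$ both arguments lie in $U$, so by $(\ref{condtwo})$ the relevant norms of these coefficients are $\le M|\xi_t|+\epsilon M^\alpha_t$ — precisely $(\ref{condone})$ with $M^\alpha_t$ replaced by $\epsilon M^\alpha_t$ and $\zeta_t$ by $\xi_t$. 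The computation in part (1) used only such a pointwise bound on the coefficients (never that they are deterministic functions of the current state), so it applies verbatim to $\xi_t$ with initial value $\zeta(\epsilon)-\zeta$ and driving amplitude $\epsilon M^\alpha_t$, which gives $(\ref{iethree})$ and, via Lenglart's inequality once more, $(\ref{iefour})$. In all of this the supremum over $\alpha\in\mathfrak A$ is harmless, since $N$, the Young weights and the maximal-inequality constants depend only on $q$ and $M$.

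The step I expect to be the real obstacle is entirely quantitative: extracting the constant $2q-1$ (rather than an unspecified $N(q,M)$) from the Young-inequality step while still retaining a strictly negative coefficient $-\lambda|\zeta_t|^q$ to use afterwards, and then propagating $2q-1$ — and the coefficient $1$ on $|\zeta|^q$ — through the supremum inside the expectation in $(\ref{ieone})$, $(\ref{iethree})$ without the factor-$2$ loss that a crude Burkholder--Davis--Gundy absorption would incur. This is what makes the bookkeeping with the retained integral term delicate, and it is the same mechanism that produces the precise constant $\tfrac{2-\theta}{1-\theta}$ in $(\ref{ietwo})$ and $(\ref{iefour})$.
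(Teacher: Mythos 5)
Your architecture coincides with the paper's at almost every point: the same localization, the same application of It\^o's formula to $e^{-Nt}|\zeta_t|^q$, Young's inequality calibrated so that the coefficient of $(M^\alpha_t)^q$ comes out as $2q-1$ while the remaining multiples of $|\zeta_t|^q$ are absorbed by taking $N=N(q,M)$ large (the paper's three Young estimates contribute $1+2+2(q-2)=2q-1$), the reduction of part (2) to part (1) by running the identical computation on the difference process with $M^\alpha_t$ replaced by $\epsilon M^\alpha_t$, and a Lenglart domination argument for (\ref{ietwo}) and (\ref{iefour}) — the paper gets these by citing Lemma 7.3(ii) of \cite{MR1944761}, which is exactly the Lenglart bound with constant $\frac{2-\theta}{1-\theta}$, so that part of your plan is the paper's proof.

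The genuine gap is exactly where you flag it, and it is not just delicate bookkeeping: your route to (\ref{ieone}) and (\ref{iethree}) — Burkholder--Davis--Gundy on the martingale part followed by absorbing a piece of $E\sup_{t\le\gamma}e^{-Nt}|\zeta_t|^q$ back into the left-hand side — cannot deliver the stated coefficients. The quadratic variation of the martingale part involves $\int_0^\gamma e^{-2Ns}|\zeta_s|^{2q}\,ds$, which you can only split as $\sup_s(e^{-Ns}|\zeta_s|^q)\cdot\int_0^\gamma e^{-Ns}|\zeta_s|^q\,ds$; after Young's inequality with a parameter $\epsilon$ and absorption, the right-hand side acquires a factor of the form $(1-C\epsilon)^{-1}\bigl(1+C'/(\epsilon\lambda)\bigr)$, which is strictly larger than $1$ for every admissible fixed $\epsilon$. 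Hence BDG yields (\ref{ieone}) only with some constant $C(q,M)>1$ multiplying $|\zeta|^q$, not the coefficient $1$ that the lemma asserts. The paper avoids BDG entirely: having shown that $e^{-Nt}|\zeta_t|^q-|\zeta|^q-(2q-1)\int_0^te^{-Ns}(M^\alpha_s)^q\,ds$ has nonpositive drift, i.e.\ is a local supermartingale, it invokes Lemma 7.3(i) of \cite{MR1944761}, a maximal inequality designed for nonnegative processes dominated in this sense by an increasing process, which passes the supremum inside the expectation without loss of constant. You should replace the BDG step by an appeal to that lemma (or prove its content directly); with that substitution the remainder of your argument, including the treatment of part (2) and the Lenglart step, is correct and is the paper's proof.
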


\begin{remark}
Observe that $q\theta$ covers $(0,\infty)$.
\end{remark}

\begin{proof}
It suffices to prove the uncontrolled version of (\ref{ieone}), (\ref{ietwo}), (\ref{iethree}) and (\ref{iefour}), so we drop the index $\alpha$ in what follows for simplicity of notation. We also abbreviate $\zeta_t^{\alpha,\zeta}$ to $\zeta_t$ and $\zeta_t^{\alpha,\zeta(\epsilon)}(\epsilon)$ to $\zeta_t(\epsilon)$. 

Also, choosing a localizing sequence of stopping times $\gamma_n\uparrow\infty$ such that $\int_0^{t\wedge\gamma_n}M_s^qe^{-Ns}ds$ are bounded for every n, we see, in view of the Monotone Convergence Theorem, that it will suffice to consider the case in which $\int_0^{t}M_s^qe^{-Ns}ds$ are bounded with respect to $(\omega,t)$.

By It\^o's formula, we have
\begin{align*}
de^{-Nt}|\zeta_t|^q=&e^{-Nt}\bigg[q|\zeta_t|^{q-2}(\zeta_t,\nu_t(\zeta_t))+\frac{q}{2}|\zeta_t|^{q-2}\|\kappa_t(\zeta_t)\|^2\\
&+\frac{q(q-2)}{2}|\zeta_t|^{q-4}|\kappa_t^*(\zeta_t)\zeta_t|^2-N|\zeta_t|^q\bigg]dt+dm_t,
\end{align*}
where $m_t$ is a local martingale starting from zero. From (\ref{condone}) we have,
\begin{equation}\label{39}
\|\kappa_t(\zeta_t)\|+|\nu_t(\zeta_t)|\le M|\zeta_t|+M_t.
\end{equation}
By Young's inequality
\begin{align*}
q|\zeta_t|^{q-2}(\zeta_t,\nu_t(\zeta_t))&\le(qM+q-1)|\zeta_t|^q+M_t^q\\
\frac{q}{2}|\zeta_t|^{q-2}\|\kappa_t(\zeta_t)\|^2&\le q|\zeta_t|^{q-2}(M^2|\zeta_t|^2+M_t^2)\le(qM^2+q-2)|\zeta_t|^q+2M_t^q\\
\frac{q(q-2)}{2}|\zeta_t|^{q-4}|\kappa_t^*(\zeta_t)\zeta_t|^2&\le(q-2)\big[(qM^2+q-2)|\zeta_t|^q+2M_t^q\big]
\end{align*}
So for sufficiently large constant $N=N(q,M)$, we have
$$e^{-Nt}|\zeta_t|^q\le|\zeta|^q+(2q-1)\int_0^tM_t^qe^{-Nt}dt.$$
which implies that
$$E\sup_{t\le\gamma}e^{-Nt}|\zeta_t|^q\le|\zeta|^q+(2q-1)E\int_0^\gamma M_t^qe^{-Nt}dt.$$
Due to Lemma 7.3(ii) in \cite{MR1944761}, we conclude that
\begin{align*}
E\sup_{t\le\gamma}e^{-Nt}|\zeta_t|^{q\theta}\le& \frac{2-\theta}{1-\theta}E\bigg(|\zeta|^q+(2q-1)\int_0^\gamma M_t^qe^{-Nt}dt\bigg)^\theta\\
\le&\frac{2-\theta}{1-\theta}\bigg(|\zeta|^{q\theta}+(2q-1)^\theta E\Big(\int_0^\gamma M_t^qe^{-Nt}dt\Big)^\theta\bigg).
\end{align*}
Similarly, by It\^o's formula,
\begin{align*}
&d\Big(e^{-Nt}|\zeta_t(\epsilon)-\zeta_t|^q\Big)\\
=&e^{-Nt}\bigg[q|\zeta_t(\epsilon)-\zeta_t|^{q-2}\Big(\zeta_t(\epsilon)-\zeta_t,\nu_t(\zeta_t(\epsilon),\epsilon)-\nu_t(\zeta_t)\Big)\\
&+\frac{q}{2}|\zeta_t(\epsilon)-\zeta_t|^{q-2}\|\kappa_t(\zeta_t(\epsilon),\epsilon)-\kappa_t(\zeta_t)\|^2\\
&+\frac{q(q-2)}{2}|\zeta_t(\epsilon)-\zeta_t|^{q-4}\Big|(\kappa_t^*(\zeta_t(\epsilon),\epsilon)-\kappa_t^*(\zeta_t))(\zeta_t(\epsilon)-\zeta_t)\Big|^2\\
&-N|\zeta_t(\epsilon)-\zeta_t|^q\bigg]dt+dm_t,
\end{align*}
where $m_t$ is a local martingale starting at zero. By (\ref{condtwo}), we have
$$\|\kappa_t(\zeta_t(\epsilon),\epsilon)-\kappa_t(\zeta_t)\|+|\nu_t(\zeta_t(\epsilon),\epsilon)-\nu_t(\zeta_t)|\le M|\zeta_t(\epsilon)-\zeta_t|+\epsilon M_t,$$
which can play the same role as (\ref{39}). So (\ref{iethree}) and (\ref{iefour}) can be proved by mimicking the argument for proving (\ref{ieone}) and (\ref{ietwo}).

\end{proof}

Next, we introduce the quasiderivatives to be used in the proof of the main theorem and apply Lemmas \ref{momentlemma} to estimate moments of these quasiderivatives. 

For any $\alpha\in\mathfrak A$, let $r_t^\alpha,\hat r_t^\alpha,\pi_t^\alpha,\hat\pi_t^\alpha,P_t^\alpha,\hat P_t^\alpha$ be jointly measurable adapted processes with values in $\mathbb R$, $\mathbb R$, $\mathbb R^{d_1}$, $\mathbb R^{d_1}$, $\mathrm{Skew}(d_1,\mathbb R)$, $\mathrm{Skew}(d_1,\mathbb R)$, respectively, where $\mathrm{Skew}(d_1,\mathbb R)$ denotes the set of all $d_1\times d_1$ skew-symmetric real matrices. Let $\epsilon$ be a small positive constant. For each $\alpha\in\mathfrak A$, $x,y,z\in D$, $\xi,\eta\in \Rd$, we consider the It\^o equation (\ref{itox}) and the following four other It\^o equations:
\allowdisplaybreaks\begin{align}
\label{itoy}
dy_t^{\alpha,y}( \epsilon)=&\sqrt{1+2 \epsilon r_t^{\alpha}}\sigma^{\alpha_t}(y_t^{\alpha,y}( \epsilon))e^{ \epsilon P_t^{\alpha}}dw_t\\
\nonumber+&\Big[(1+2 \epsilon r_t^{\alpha})b^{\alpha_t}(y_t^{\alpha,y}( \epsilon))-\sqrt{1+2 \epsilon r_t^{\alpha}}\sigma^{\alpha_t}(y_t^{\alpha,y}( \epsilon))e^{ \epsilon P_t^{\alpha}} \epsilon\pi_t^{\alpha}\Big]dt,\\
\label{itoz}
dz_t^{\alpha,z}(\epsilon)=&\sqrt{1+2 \epsilon r_t^{\alpha}+ \epsilon^2\hat r_t^{\alpha}}\sigma^{\alpha_t}(z_t^{\alpha,z}(\epsilon))e^{ \epsilon P_t^{\alpha}}e^{\frac{\epsilon^2}{2}  \hat P_t^{\alpha}}dw_t\\
\nonumber+&\Big[(1+2 \epsilon r_t^{\alpha}+ \epsilon^2\hat r_t^{\alpha})b^{\alpha_t}(z_t^{\alpha,z}(\epsilon))\\
\nonumber&-\sqrt{1+2 \epsilon r_t^{\alpha}+ \epsilon^2\hat r_t^{\alpha}}\sigma^{\alpha_t}(z_t^{\alpha,z}(\epsilon))e^{ \epsilon P_t^{\alpha}}e^{\frac{\epsilon^2}{2} \hat P_t^{\alpha}}( \epsilon\pi_t^{\alpha}+\frac{\epsilon^2}{2} \hat\pi_t^{\alpha})\Big]dt,\\
\label{itoxi}
d\xi_t^{\alpha,\xi}=&\Big[\sigma^{\alpha_t}_{(\xi_t^{\alpha,\xi})}+r^\alpha_t\sigma^{\alpha_t}+\sigma^{\alpha_t} P^\alpha_t\Big]dw_t\\
\nonumber+ &\Big[b^{\alpha_t}_{(\xi_t^{\alpha,\xi})}+2r^\alpha_tb^{\alpha_t}-\sigma^{\alpha_t} \pi^\alpha_t\Big]dt,\\
\label{itoeta}
d\eta_t^{\alpha,\eta}=&\Big[\sigma^{\alpha_t}_{(\eta_t^{\alpha,\eta})}+\hat{r}^\alpha_t\sigma^{\alpha_t}+\sigma^{\alpha_t}\hat{P}^\alpha_t +\sigma^{\alpha_t}_{(\xi_t^{\alpha,\xi})(\xi_t^{\alpha,\xi})}+  2r^\alpha_t\sigma^{\alpha_t}_{(\xi_t^{\alpha,\xi})}\\
\nonumber&+2\sigma^{\alpha_t}_{(\xi_t^{\alpha,\xi})} P^\alpha_t+2r^\alpha_t\sigma^{\alpha_t} P^\alpha_t-(r^\alpha_t)^2\sigma^{\alpha_t}+\sigma^{\alpha_t} (P^\alpha_t)^2\Big]dw_t\\
\nonumber+  &\Big[b^{\alpha_t}_{(\eta_t^{\alpha,\eta})}+2\hat{r}^\alpha_t b^{\alpha_t}-\sigma^{\alpha_t}\hat{\pi}^\alpha_t+b^{\alpha_t}_{(\xi_t^{\alpha,\xi})(\xi_t^{\alpha,\xi})}+4r^\alpha_t b^{\alpha_t}_{(\xi_t^{\alpha,\xi})}\\
\nonumber&-2\sigma^{\alpha_t}_{(\xi_t^{\alpha,\xi})}\pi^\alpha_t-2r^\alpha_t\sigma^{\alpha_t}\pi^\alpha_t-2\sigma^{\alpha_t} P^\alpha_t \pi^\alpha_t\Big]dt,
\end{align}
where $\sigma^{\alpha}$ and $b^{\alpha}$ satisfy (\ref{tiao}) and we drop the arguments $x_t^{\alpha,x}$ in $\sigma^{\alpha_t}$ and $b^{\alpha_t}$ and their derivatives in (\ref{itoxi}) and (\ref{itoeta}). 

Let $\bar\tau_D^{\alpha,y}(\epsilon)$ be the first exit time of $y_t^{\alpha,y}(\epsilon)$ from $D$, and $\hat\tau_D^{\alpha,z}(\epsilon)$ be the first exit time of $z_t^{\alpha,z}(\epsilon)$ from $D$.

By Theorem 3.2.1 in \cite{MR2144644} we know that if
\begin{equation}\label{rpip1}
\begin{gathered}
\int_0^T(|r^\alpha_t|^2+|\pi^\alpha_t|^2+|P^\alpha_t|^2)dt<\infty, \\
\forall T\in[0,\infty), \forall\alpha\in\mathfrak A,
\end{gathered}
\end{equation}
then (\ref{itoy}) and (\ref{itoxi}) have unique solutions on $[0,\bar\tau_D^{\alpha,y}(\epsilon))$ and $[0,\tau_D^{\alpha,x})$, respectively. 

Similarly, it is shown in Theorem 2.1 in \cite{quasi-linear} that if
\begin{equation}\label{rpip2}
\begin{gathered}
\int_0^T(|\hat r^\alpha_t|^2+|\hat\pi^\alpha_t|^2+|\hat P^\alpha_t|^2+|r^\alpha_t|^4+|\pi^\alpha_t|^4+|P^\alpha_t|^4)dt<\infty,\\
 \forall T\in[0,\infty), \forall\alpha\in\mathfrak A,
\end{gathered}
\end{equation}
then (\ref{itoz}) and (\ref{itoeta}) have unique solutions on $[0,\hat\tau_D^{\alpha,z}(\epsilon))$ and $[0,\tau_D^{\alpha,x})$, respectively. 

In (\ref{itoy}) and (\ref{itoz}), notice that when $\epsilon=0$, we have $y_t^{\alpha,y}( 0)$ and $z_t^{\alpha,z}(0)$, which are nothing but $x_t^{\alpha,y}$ and $x_t^{\alpha,z}$. Therefore, $y_t^{\alpha,y}(\epsilon)$ and $z_t^{\alpha,z}(\epsilon)$ are perturbations of $x_t^{\alpha,x}$. In Theorems \ref{thm1} and \ref{thm2} we will prove that under suitable conditions, $\xi_t^{\alpha,\xi}$ and $\eta_t^{\alpha,\eta}$, given by (\ref{itoxi}) and (\ref{itoeta}), respectively,  are the first derivative of $y_t^{\alpha,x+\epsilon\xi}(\epsilon)$ and the second derivative of $z_t^{\alpha, x+\epsilon\xi+\epsilon^2\eta/2}(\epsilon)$ in some sense (see (\ref{res1b2}) and (\ref{res2b3})), respectively.

The auxiliary processes $r_t^\alpha$ and $\hat r_t^\alpha$ come from random time change. The processes $\pi_t^\alpha$ and $\hat\pi_t^\alpha$ are due to Girsanov's theorem on changing the probability space, and the processes $P_t^\alpha$ and $\hat P_t^\alpha$ are based on changing the Wiener process based on Levy's theorem. As discussed in Section 2 of \cite{quasi-linear}, thanks to the presence of these auxiliary processes, the quasiderivatives $\xi_t^{\alpha, \xi}$ and $\eta_t^{\alpha,\eta}$ enjoy certain freedom. It turns out that, heuristically, we can steer the quasiderivatives so that they are tangent to the boundary when $x_t^{\alpha,x}$ hit it. As a result, the directional derivatives of $v$ along the quasiderivatives become the derivatives of the boundary data $g$, and estimating the derivatives of $v$ is reduced to estimating the moments of the quasiderivatives.

\begin{theorem}\label{thm1} Given constants $p\in (0,\infty)$, $p'\in[0,p)$, $T\in [1,\infty)$, $x\in D$, $\xi\in\Rd$. Suppose (\ref{rpip1}) is satisfied. Assume that there exists a constant $K\in[1,\infty)$ and for any $\alpha\in\mathfrak A$, an adapted nonnegative process $K_t^\alpha$, such that
\begin{equation}\label{cond1}
|r_t^\alpha|+|\pi_t^\alpha|+|P_t^\alpha|\le K|\xi_t^{\alpha,\xi}|+K_t^\alpha, \forall\alpha.
\end{equation}
\begin{enumerate}
\item Given stopping times $\gamma^\alpha\le\tau_D^{\alpha,x}$, $\alpha\in\mathfrak A$, if
\begin{equation}\label{cond1a}
\sup_{\alpha\in\mathfrak A}E^\alpha\int_0^{\gamma\wedge T}K_t^{2\vee p}dt<\infty,
\end{equation}
then we have
\begin{equation}\label{res1a}
\sup_{\alpha\in\mathfrak A}E^\alpha_\xi\sup_{t\le\gamma\wedge T}|\xi_t|^p<\infty.
\end{equation}
\item Let the constant $\epsilon_0$ be sufficiently small so that $B(x,\epsilon_0|\xi|)\subset D$. For any $\epsilon\in[0,\epsilon_0]$, given stopping times $\gamma^\alpha(\epsilon)\le\tau_D^{\alpha,x}\wedge\bar\tau_D^{\alpha,x+\epsilon\xi}(\epsilon)$, $\alpha\in\mathfrak A$, if
\begin{equation}\label{cond1b}
\sup_{\epsilon\in[0,\epsilon_0]}\sup_{\alpha\in\mathfrak A}E^\alpha\int_0^{\gamma(\epsilon)\wedge T}K_t^{2(2\vee p)}dt<\infty,
\end{equation}
then we have
\begin{equation}\label{res1b1}
\lim_{\epsilon\downarrow0}\sup_{\alpha\in\mathfrak A}E\sup_{t\le\gamma^\alpha(\epsilon)\wedge T}\frac{|y_t^{\alpha,x+\epsilon\xi}(\epsilon)-x_t^{\alpha,x}|^p}{\epsilon^{p'}}=0,
\end{equation}
\begin{equation}\label{res1b2}
\lim_{\epsilon\downarrow0}\sup_{\alpha\in\mathfrak A}E\sup_{t\le\gamma^\alpha(\epsilon)\wedge T}|\frac{y_t^{\alpha,x+\epsilon\xi}(\epsilon)-x_t^{\alpha,x}}{\epsilon}-\xi_t^{\alpha,\xi}|^{p/2}=0.
\end{equation}
\end{enumerate}
\end{theorem}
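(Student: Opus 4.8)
Both parts will be deduced from the moment estimates of Lemma \ref{momentlemma}, applied first to the linear quasiderivative equation (\ref{itoxi}) and then to the difference–quotient process $\zeta_t^\epsilon:=\epsilon^{-1}(y_t^{\alpha,x+\epsilon\xi}(\epsilon)-x_t^{\alpha,x})$, which starts from $\zeta_0^\epsilon=\xi$ and, after substituting $y_t^{\alpha,x+\epsilon\xi}(\epsilon)=x_t^{\alpha,x}+\epsilon\zeta_t^\epsilon$, solves the It\^o equation obtained by subtracting (\ref{itox}) from (\ref{itoy}) and dividing by $\epsilon$. The recurring tools are: (\ref{tiao}), which makes $\|\sigma^\alpha_{(y)}\|$, $|b^\alpha_{(y)}|$ and their first differences bounded by $N(d,d_1,K_0)|y|$; (\ref{cond1}), which lets us replace $|r_t^\alpha|+|\pi_t^\alpha|+|P_t^\alpha|$ by $K|\xi_t^{\alpha,\xi}|+K_t^\alpha$ wherever it occurs; and the elementary inequalities $|\sqrt{1+2\epsilon r}-1|\le2\epsilon|r|$, $\sqrt{1+2\epsilon r}\le1+\sqrt{2\epsilon|r|}$, $\|e^{\epsilon P}-I\|\le\epsilon\|P\|$ and $\|e^{\epsilon P}-I-\epsilon P\|\le\tfrac12\epsilon^2\|P\|^2$ (the last two because $P$ is skew-symmetric, so $e^{\epsilon P}$ is orthogonal).

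Part (1) is the easiest. Along its solution the coefficients of (\ref{itoxi}) satisfy, by (\ref{tiao}) and (\ref{cond1}), a bound $\|\text{diffusion coeff.}\|+|\text{drift coeff.}|\le M|\xi_t^{\alpha,\xi}|+MK_t^\alpha$ with $M=M(d,d_1,K_0,K)$ --- exactly the analogue of inequality (\ref{39}) in the proof of Lemma \ref{momentlemma}. Running the It\^o-formula computation of that proof with $q=2\vee p$ and $M_t=MK_t^\alpha$ gives $\sup_\alpha E_\xi^\alpha\sup_{t\le\gamma\wedge T}|\xi_t|^{2\vee p}\le e^{NT}(|\xi|^{2\vee p}+N\sup_\alpha E^\alpha\int_0^{\gamma\wedge T}(K_t^\alpha)^{2\vee p}dt)$, finite by (\ref{cond1a}); for $p<2$ one then applies Jensen's inequality to the concave map $s\mapsto s^{p/2}$ with $s=\sup|\xi_t|^{2}$ to obtain (\ref{res1a}). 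Below I use part (1) with exponent $2(2\vee p)$, for which the hypothesis (\ref{cond1a}) becomes precisely (\ref{cond1b}).

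For part (2) I first establish $\sup_{\epsilon\in(0,\epsilon_0]}\sup_\alpha E\sup_{t\le\gamma(\epsilon)\wedge T}|\zeta_t^\epsilon|^{2\vee p}<\infty$. By the Lipschitz/boundedness bounds for $\sigma,b$ from (\ref{tiao}), the elementary inequalities above, (\ref{cond1}), and $\sqrt{ab}\le\tfrac12(a+b)$, the coefficients of the $\zeta^\epsilon$-equation obey, along the solution, $\|\text{diffusion coeff.}\|+|\text{drift coeff.}|\le M|\zeta_t^\epsilon|+M_t^\alpha$ with $M_t^\alpha=N(|\xi_t^{\alpha,\xi}|+K_t^\alpha+|\xi_t^{\alpha,\xi}|^2+(K_t^\alpha)^2)$. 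Mimicking the proof of Lemma \ref{momentlemma} again, with $q=2\vee p$, the claim reduces to the $(\epsilon,\alpha)$-uniform finiteness of $E\int_0^{\gamma(\epsilon)\wedge T}(M_t^\alpha)^{2\vee p}dt$, which follows from (\ref{cond1b}) for the $K^\alpha$-terms and from part (1) with exponent $2(2\vee p)$ for $E\sup_{t\le\gamma(\epsilon)\wedge T}|\xi_t^{\alpha,\xi}|^{2(2\vee p)}$. Then (\ref{res1b1}) is immediate, its left-hand side being $\epsilon^{p-p'}\sup_\alpha E\sup_{t\le\gamma^\alpha(\epsilon)\wedge T}|\zeta_t^\epsilon|^p\le N\epsilon^{p-p'}\to0$.

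For (\ref{res1b2}) set $\chi_t^\epsilon:=\zeta_t^\epsilon-\xi_t^{\alpha,\xi}$; then $\chi_0^\epsilon=0$ and $\chi^\epsilon$ solves the difference of the $\zeta^\epsilon$-equation and (\ref{itoxi}). Split each coefficient difference into (coeff.\ of $\zeta^\epsilon$-eqn at $\zeta_t^\epsilon$)$-$(coeff.\ of (\ref{itoxi}) at $\zeta_t^\epsilon$) plus (coeff.\ of (\ref{itoxi}) at $\zeta_t^\epsilon$)$-$(coeff.\ of (\ref{itoxi}) at $\xi_t^{\alpha,\xi}$): the second piece equals $\sigma^{\alpha_t}_{(\chi_t^\epsilon)}(x_t^{\alpha,x})$, resp.\ $b^{\alpha_t}_{(\chi_t^\epsilon)}(x_t^{\alpha,x})$, of norm $\le N|\chi_t^\epsilon|$; the first piece is the first-order Taylor remainder of the $\zeta^\epsilon$-coefficients about those of (\ref{itoxi}), and a short computation --- Taylor's theorem (with (\ref{tiao})) on the set $\{\epsilon v_t\le1\}$, where $v_t:=1+|\zeta_t^\epsilon|+|r_t^\alpha|+|\pi_t^\alpha|+\|P_t^\alpha\|$, together with the crude bounds above and $v_t>1/\epsilon$ on the complement --- bounds it by $\epsilon\mathcal E_t^\alpha$ with $\mathcal E_t^\alpha:=N(1+|\zeta_t^\epsilon|^2+|\xi_t^{\alpha,\xi}|^2+(K_t^\alpha)^2)$, a process \emph{not} depending on $\chi^\epsilon$. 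Mimicking Lemma \ref{momentlemma}(2) (both the $q$-th moment step and the concavity step), with $q=2,\ \theta=p/4$ when $p<4$ and $q=p,\ \theta=1/2$ when $p\ge4$ (so $q\ge2$, $\theta\in(0,1)$, $q\theta=p/2$, $2q\le2(2\vee p)$), one obtains
\[
\sup_\alpha E\sup_{t\le\gamma^\alpha(\epsilon)\wedge T}|\chi_t^\epsilon|^{p/2}\ \le\ N\epsilon^{p/2}\,\sup_\alpha E\Big(\int_0^{\gamma(\epsilon)\wedge T}(\mathcal E_t^\alpha)^q\,dt\Big)^{\theta}.
\]
Here the exponent $\theta<1$ is essential: H\"older in $t$, subadditivity of $s\mapsto s^\theta$, and Jensen bound the last expectation by $N(E\sup|\zeta_t^\epsilon|^{2q\theta}+E\sup|\xi_t^{\alpha,\xi}|^{2q\theta}+(E\int_0^{\gamma(\epsilon)\wedge T}(K_t^\alpha)^{2q}dt)^{\theta'}+1)=N(E\sup|\zeta_t^\epsilon|^{p}+E\sup|\xi_t^{\alpha,\xi}|^{p}+\cdots)$, which is finite and $\epsilon$-uniform by the $2\vee p$-moment bound on $\zeta^\epsilon$ proved above, by part (1), and by (\ref{cond1b}); hence the right-hand side tends to $0$. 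The chief obstacle is exactly this step: the Taylor remainder is genuinely \emph{quadratic} in the auxiliary processes $r^\alpha,\pi^\alpha,P^\alpha$ --- which (\ref{cond1}) controls only by $K|\xi^{\alpha,\xi}|+K^\alpha$ --- and in the state $\zeta^\epsilon$, which we control only by its own $2\vee p$-moment; this forces the doubled integrability assumption (\ref{cond1b}) and makes a direct $L^{p/2}$-estimate for $\chi^\epsilon$ fail, so one must route through the concavity (``$\theta$'') step of Lemma \ref{momentlemma}. A lesser nuisance is that $r_t^\alpha$ is only square-integrable in $t$, hence $\sqrt{1+2\epsilon r_t^\alpha}$ is unbounded, which is absorbed by the elementary inequalities and the case split at $\{\epsilon v_t\le1\}$.
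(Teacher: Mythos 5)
Your proposal is correct and follows essentially the same route as the paper: both parts reduce to Lemma \ref{momentlemma}, with (\ref{res1b1}) coming from the comparison of $y_t(\epsilon)$ with $x_t$ and (\ref{res1b2}) from comparing the difference quotient $(y_t^{\alpha,x+\epsilon\xi}(\epsilon)-x_t^{\alpha,x})/\epsilon$ with $\xi_t^{\alpha,\xi}$, bounding the coefficient discrepancies by $M|{\cdot}|+\epsilon M_t$ with $M_t$ quadratic in $r,\pi,P$ and the state, exactly as in the paper. Your explicit choice of $(q,\theta)$ with $q\theta=p/2$ in the concavity step of Lemma \ref{momentlemma}(2) spells out what the paper compresses into ``mimicking the proof of (\ref{res1b1})'' and correctly accounts for the drop to the exponent $p/2$ and for the role of hypothesis (\ref{cond1b}).
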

\begin{proof}
In the proof, we drop the superscripts $\alpha$, $\alpha_t$, etc., when this will not cause confusion.

To prove (1) we consider the It\^o equation (\ref{itozeta}) in which $\zeta_t^{\alpha,\zeta}=\xi_t^{\alpha,\xi}$. By conditions (\ref{tiao}) and (\ref{cond1}), we have
$$\big\|\sigma_{(\xi_t)}+r_t\sigma+\sigma P_t\big\|+\big|b_{(\xi_t^{\alpha,\xi})}+2r_tb-\sigma \pi_t\big|\le M|\xi_t|+M_t, \forall\alpha,$$
where $M=N(K,K_0), M_t^\alpha=N(K_0)K_t^\alpha$.
Applying Lemma \ref{momentlemma}(1), we have
\begin{equation*}
\sup_{\alpha\in\mathfrak A}E^\alpha_\xi\sup_{t\le\gamma\wedge T}|\xi_t|^p\le\left\{
\begin{array}{l}
e^{NT}(|\xi|^p+(2p-1)\sup_{\alpha\in\mathfrak A}E^\alpha\int_0^{\gamma\wedge T}M_t^pdt)\mbox{ if }p\ge2\\
e^{NT}\frac{4-p}{2-p}(|\xi|^p+3^{\frac{p}{2}}(\sup_{\alpha\in\mathfrak A}E^\alpha\int_0^{\gamma\wedge T}M_t^2dt)^{\frac{p}{2}})\mbox{ if }p<2.
\end{array}
\right. 
\end{equation*}
To prove (2) we first consider the It\^o equations (\ref{itozeta}) and (\ref{itozetae}) in which 
$$\zeta_t^{\alpha,\zeta}=x_t^{\alpha,x},\qquad\zeta_t^{\alpha,\zeta(\epsilon)}(\epsilon)=y_t^{\alpha,x+\epsilon\xi}(\epsilon).$$
Notice that
\begin{align*}
\|\kappa_t(y,\epsilon)-\kappa_t(x)\|=&\|\sqrt{1+2 \epsilon r_t}\sigma(y)e^{ \epsilon P_t}-\sigma(x)\|\\
\le&|\sqrt{1+2 \epsilon r_t}-1|\|\sigma(y)e^{ \epsilon P_t}\|+\|\sigma(y)\| \| e^{\epsilon P_t} -I_{d_1\times d_1}\|\\
&+\|\sigma(y)-\sigma(x)\|\\
\le&2\epsilon |r_t| K_0+K_0\epsilon e^{\epsilon' P_t}+K_0|y-x|\\
\le& M|y-x|+\epsilon M_t,
\end{align*}
where $\epsilon'\in[0,\epsilon]$ is due to Taylor's theorem with Lagrange remainder. Similarly,
\begin{align*}
|\nu_t(y,\epsilon)-\nu_t(x)|=&|(1+2 \epsilon r_t)b(y)-\sqrt{1+2 \epsilon r_t}\sigma(y)e^{ \epsilon P_t} \epsilon\pi_t-b(x)|\\
\le&2\epsilon|r_t|K_0+(1+\epsilon|r_t|)K_0\epsilon|\pi_t|+K_0|y-x|\\
\le&M|y-x|+\epsilon M_t,
\end{align*}
where $M=K_0, M_t^\alpha=N(K,K_0)(|\xi_t^{\alpha,\xi}|^2+(K_t^\alpha)^2\vee 1)$.
Applying Lemma \ref{momentlemma}(2), we have
$$\sup_{\alpha\in\mathfrak A}E\sup_{t\le\gamma^\alpha(\epsilon)\wedge T}|y_t^{\alpha,x+\epsilon\xi}(\epsilon)-x_t^{\alpha,x}|^p$$
\begin{equation*}
\le\left\{
\begin{array}{l}
\epsilon^pe^{NT}(|\xi|^p+(2p-1)\sup_{\alpha\in\mathfrak A}E^\alpha\int_0^{\gamma(\epsilon)\wedge T}M_t^pdt)\mbox{ if }p\ge2\\
\epsilon^pe^{NT}\frac{4-p}{2-p}(|\xi|^p+3^{\frac{p}{2}}(\sup_{\alpha\in\mathfrak A}E^\alpha\int_0^{\gamma(\epsilon)\wedge T}M_t^2dt)^{\frac{p}{2}})\mbox{ if }p<2.
\end{array}
\right. 
\end{equation*}
Due to (\ref{cond1b}) and (\ref{res1a}), we have
$$\sup_{[0,\epsilon_0]}\sup_{\alpha\in\mathfrak A}E^\alpha\int_0^{\gamma(\epsilon)\wedge T}M_t^{2\vee p}dt<\infty,$$
which completes the proof of (\ref{res1b1}).

Next, we first consider the It\^o equations (\ref{itozeta}) and (\ref{itozetae}) in which 
$$\zeta_t^{\alpha,\zeta}=\xi_t^{\alpha,\xi},\qquad\zeta_t^{\alpha,\zeta(\epsilon)}(\epsilon)=\xi_t^{\alpha,\xi}(\epsilon):=\frac{y_t^{\alpha,x+\epsilon\xi}(\epsilon)-x_t^{\alpha,x}}{\epsilon}.$$
Observe that, by mean value theorem
\begin{align*}
\|\frac{\sigma(y_t(\epsilon))-\sigma(x_t)}{\epsilon}-\sigma_{(\xi_t)}(x_t)\|=&\|\sigma_{(\xi_t(\epsilon))}(y^*_t(\epsilon))-\sigma_{(\xi_t)}(x_t)\|\\
=&\|\sigma_{(\xi_t(\epsilon))}(y^*_t(\epsilon))-\sigma_{(\xi_t(\epsilon))}(x_t)\|+\|\sigma_{(\xi_t(\epsilon))}(x_t)-\sigma_{(\xi_t)}(x_t)\|\\
\le &|\xi_t(\epsilon)|\|\sigma_x(y_t^*(\epsilon))-\sigma_x(x_t)\|I_{|y_t(\epsilon)-x_t|\le\delta}\\
&+|\xi_t(\epsilon)|I_{|y_t(\epsilon)-x_t|>\delta}+K_0|\xi_t(\epsilon)-\xi_t|,\\
|\frac{b(y_t(\epsilon))-b(x_t)}{\epsilon}-b_{(\xi_t)}(x_t)|\le &|\xi_t(\epsilon)|\|b_x(y_t^*(\epsilon))-b_x(x_t)\|I_{|y_t(\epsilon)-x_t|\le\delta}\\
&+|\xi_t(\epsilon)|I_{|y_t(\epsilon)-x_t|>\delta}+K_0|\xi_t(\epsilon)-\xi_t|,\\
|\frac{\sqrt{1+2\epsilon r_t}-1}{\epsilon}-r_t|=&|r_t(\frac{2}{\sqrt{1+2\epsilon r_t}+1}-1)|\\
=&|\frac{-2\epsilon r_t^2}{(1+\sqrt{1+2\epsilon r_t})^2}|\le2\epsilon |r_t|^2,\\
\|\frac{e^{\epsilon P_t}-1}{\epsilon}-P_t\|=&\frac{\epsilon}{2}\|P_t^2e^{\epsilon'P_t}\|\le\frac{\epsilon}{2}\|P_t\|^2.
\end{align*}
The equation (\ref{res1b2}) can be proved by mimicking the proof of (\ref{res1b1}).
\end{proof}
\begin{theorem}\label{thm2}
Given constants $p\in (0,\infty)$, $p'\in[0,p)$, $T\in [1,\infty)$, $x\in D$, $\xi\in\Rd$, $\eta\in\Rd$. Suppose (\ref{rpip2}) is satisfied. Assume that there exists a constant $K\in[1,\infty)$ and for any $\alpha\in\mathfrak A$, an adapted nonnegative process $K_t^\alpha$, such that 
\begin{equation}\label{cond2}
|\hat r_t^\alpha|+|\hat \pi_t^\alpha|+|\hat P_t^\alpha|+|r_t^\alpha|^2+|\pi^\alpha_t|^2+|P_t^\alpha|^2
\le K(|\eta_t^{\alpha,\eta}|+|\xi_t^{\alpha,\xi}|^2)+K_t^\alpha, \forall\alpha.
\end{equation}
\begin{enumerate}
\item Given stopping times $\gamma^\alpha\le\tau_D^{\alpha,x}$, $\alpha\in\mathfrak A$, if (\ref{cond1a}) holds, then we have (\ref{res1a}) and
\begin{equation}\label{res2a}
\sup_{\alpha\in\mathfrak A}E^\alpha_\eta\sup_{t\le\gamma\wedge T}|\eta_t|^p<\infty.
\end{equation}
\item Let the constant $\epsilon_0$ be sufficiently small so that $B(x,\epsilon_0|\xi|)\subset D$. For any $\epsilon\in[0,\epsilon_0]$, let 
$$x(\epsilon)=x+\epsilon\xi+\frac{\epsilon^2}{2}\eta.$$ 
If (\ref{cond1b}) holds for given stopping times $\gamma_2^\alpha(\epsilon)$ satisfying
$$\gamma_2^\alpha(\epsilon)\le\tau_D^{\alpha,x}\wedge\hat\tau_D^{\alpha,x(\epsilon)}(\epsilon), \alpha\in\mathfrak A,$$ 
then we have
\begin{equation}\label{res2b1}
\lim_{\epsilon\downarrow0}\sup_{\alpha\in\mathfrak A}E\sup_{t\le\gamma_2^\alpha(\epsilon)\wedge T}\frac{|z_t^{\alpha,x(\epsilon)}(\epsilon)-x_t^{\alpha,x}|^{p}}{\epsilon^{p'}}=0,
\end{equation}
\begin{equation}\label{res2b1'}
\lim_{\epsilon\downarrow0}\sup_{\alpha\in\mathfrak A}E\sup_{t\le\gamma_2^\alpha(-\epsilon)\wedge T}\frac{|z_t^{\alpha,x(-\epsilon)}(-\epsilon)-x_t^{\alpha,x}|^{p}}{\epsilon^{p'}}=0,
\end{equation}
\begin{equation}\label{res2b2}
\lim_{\epsilon\downarrow0}\sup_{\alpha\in\mathfrak A}E\sup_{t\le\gamma_2^\alpha(\epsilon)\wedge T}|\frac{z_t^{\alpha,x(\epsilon)}(\epsilon)-x_t^{\alpha,x}}{\epsilon}-\xi_t^{\alpha,\xi}|^p=0.
\end{equation}
If (\ref{cond1b}) holds for given stopping times $\gamma_3^\alpha(\epsilon)$ satisfying
$$\gamma_3^\alpha(\epsilon)\le\tau_D^{\alpha,x}\wedge\hat\tau_D^{\alpha,x(\epsilon)}(\epsilon)\wedge\hat\tau_D^{\alpha,x(-\epsilon)}(-\epsilon), \alpha\in\mathfrak A,$$ 
then we have
\begin{equation}\label{res2b3}
\lim_{\epsilon\downarrow0}\sup_{\alpha\in\mathfrak A}E\sup_{t\le\gamma_3^\alpha(\epsilon)\wedge T}\bigg|\frac{z_t^{\alpha,x(\epsilon)}(\epsilon)-2x_t^{\alpha,x}+z_t^{\alpha,x(-\epsilon)}(-\epsilon)}{\epsilon^2}-\eta_t^{\alpha,\eta}\bigg|^{p/2}=0.
\end{equation}
\end{enumerate}
\end{theorem}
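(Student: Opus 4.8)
\emph{Overview.} The scheme follows that of Theorem~\ref{thm1}, with one genuinely new feature in part~(1): since (\ref{cond2}) controls the auxiliary processes in terms of $|\eta_t^{\alpha,\eta}|$ and $|\xi_t^{\alpha,\xi}|^2$ (not merely $|\xi_t^{\alpha,\xi}|$, as in (\ref{cond1})), the moments of $\xi_t^{\alpha,\xi}$ and of $\eta_t^{\alpha,\eta}$ cannot be estimated one after the other, so I would estimate a single enlarged process. Part~(2) is then the analogue of Theorem~\ref{thm1}(2): one Taylor--expands the coefficients of (\ref{itoz}) in $\epsilon$, matches the limits against the coefficients of (\ref{itox}), (\ref{itoxi}) and (\ref{itoeta}), and concludes with Lemma~\ref{momentlemma}(2).

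\emph{Part (1).} Work with the continuous adapted process
$$\Theta_t^\alpha:=\big(\xi_t^{\alpha,\xi},\ \xi_t^{\alpha,\xi}\otimes\xi_t^{\alpha,\xi},\ \eta_t^{\alpha,\eta}\big),$$
well defined on $[0,\tau_D^{\alpha,x})$ by (\ref{rpip2}) and the existence results quoted above, which by It\^o's formula applied to (\ref{itoxi}) and (\ref{itoeta}) solves an It\^o equation of the form (\ref{itozeta}). Using (\ref{tiao}) for $\sigma$, $b$ and their first two derivatives, using (\ref{cond2}) in the form $|r_t|^2+|\pi_t|^2+|P_t|^2\le K(|\eta_t|+|\xi_t|^2)+K_t^\alpha$ to dominate every quadratic cross--term (e.g.\ $|r_t||\xi_t|\le\tfrac12|r_t|^2+\tfrac12|\xi_t|^2$, $|r_t||P_t|\le\tfrac12(|r_t|^2+|P_t|^2)$, $|\eta_t|^{1/2}\le\tfrac12(|\eta_t|+1)$), and using $\|\xi_t\otimes\xi_t\|=|\xi_t|^2$, one checks that the realized coefficients of this equation obey the linear--growth bound (\ref{condone}) with $M=N(K,K_0)$ and $M_t^\alpha=N(K,K_0)(K_t^\alpha+1)$, while $|\Theta_t^\alpha|$ dominates each of $|\xi_t^{\alpha,\xi}|$, $|\xi_t^{\alpha,\xi}|^2$ and $|\eta_t^{\alpha,\eta}|$. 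Lemma~\ref{momentlemma}(1) with $q=2\vee p$, the finiteness of $T$, and (\ref{cond1a}) then give $\sup_{\alpha}E^\alpha\sup_{t\le\gamma\wedge T}|\Theta_t^\alpha|^{2\vee p}<\infty$, hence (\ref{res1a}), (\ref{res2a}), and in addition $\sup_{\alpha}E^\alpha\sup_{t\le\gamma\wedge T}|\xi_t^{\alpha,\xi}|^{2(2\vee p)}<\infty$, which will be used in part~(2).

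\emph{Part (2), the first--order statements.} The limits (\ref{res2b1}), (\ref{res2b1'}) follow from Lemma~\ref{momentlemma}(2) with $\zeta_t=x_t^{\alpha,x}$ and $\zeta_t(\epsilon)=z_t^{\alpha,x(\pm\epsilon)}(\pm\epsilon)$: one verifies (\ref{condtwo}) exactly as in Theorem~\ref{thm1}(2), now also expanding the extra factors $\sqrt{1\pm2\epsilon r_t+\epsilon^2\hat r_t}$, $e^{\pm\epsilon P_t}$ and $e^{\frac{\epsilon^2}{2}\hat P_t}$ by Taylor's formula with Lagrange remainder and bounding the remainders via (\ref{cond2}) and part~(1); since $|x(\pm\epsilon)-x|\le N\epsilon$ one gets the rate $o(\epsilon^{p'})$. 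For (\ref{res2b2}) one compares, again by Lemma~\ref{momentlemma}(2), the process $\epsilon^{-1}\big(z_t^{\alpha,x(\epsilon)}(\epsilon)-x_t^{\alpha,x}\big)$ with $\xi_t^{\alpha,\xi}$: a first--order expansion shows the two It\^o equations differ by a term Lipschitz in the state with constant $K_0$ plus an error of order $\epsilon$ whose moments are finite by part~(1) and by the doubled integrability in (\ref{cond1b}) (this is where (\ref{cond1b}) enters), and the initial discrepancy is $\epsilon^{-1}(x(\epsilon)-x)-\xi=\tfrac{\epsilon}{2}\eta\to0$; the same argument with $-\epsilon$ gives convergence of $-\epsilon^{-1}\big(z_t^{\alpha,x(-\epsilon)}(-\epsilon)-x_t^{\alpha,x}\big)$ to $\xi_t^{\alpha,\xi}$.

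\emph{Part (2), the main obstacle.} For (\ref{res2b3}) put $D_t^\alpha(\epsilon):=\epsilon^{-2}\big(z_t^{\alpha,x(\epsilon)}(\epsilon)-2x_t^{\alpha,x}+z_t^{\alpha,x(-\epsilon)}(-\epsilon)\big)$, so that $D_0^\alpha(\epsilon)=\eta$ identically, and $w_\pm:=\pm\epsilon^{-1}\big(z_t^{\alpha,x(\pm\epsilon)}(\pm\epsilon)-x_t^{\alpha,x}\big)$, so that $z_t^{\alpha,x(\pm\epsilon)}(\pm\epsilon)-x_t^{\alpha,x}=(\pm\epsilon)w_\pm$, $w_+-w_-=\epsilon D_t^\alpha(\epsilon)$, and $w_\pm\to\xi_t^{\alpha,\xi}$ by (\ref{res2b2}) and its $-\epsilon$ version. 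Inserting into $dD_t^\alpha(\epsilon)=\epsilon^{-2}\big(dz_t^{\alpha,x(\epsilon)}(\epsilon)-2\,dx_t^{\alpha,x}+dz_t^{\alpha,x(-\epsilon)}(-\epsilon)\big)$ the expansions $\sqrt{1\pm2\epsilon r_t+\epsilon^2\hat r_t}=1\pm\epsilon r_t+\tfrac{\epsilon^2}{2}(\hat r_t-r_t^2)+o(\epsilon^2)$, $e^{\pm\epsilon P_t}=I\pm\epsilon P_t+\tfrac{\epsilon^2}{2}P_t^2+o(\epsilon^2)$, $e^{\frac{\epsilon^2}{2}\hat P_t}=I+\tfrac{\epsilon^2}{2}\hat P_t+o(\epsilon^2)$, and $\sigma(z_t^{\alpha,x(\pm\epsilon)}(\pm\epsilon))=\sigma(x_t)\pm\epsilon\sigma_{(w_\pm)}(x_t)+\tfrac{\epsilon^2}{2}\sigma_{(w_\pm)(w_\pm)}(\tilde z_\pm)$ (Taylor with Lagrange remainder, $\tilde z_\pm$ between $x_t$ and $z_t^{\alpha,x(\pm\epsilon)}(\pm\epsilon)$) and the like for $b$, a careful bookkeeping shows that in the central difference all net--odd--order contributions vanish, the first--order pieces combine --- through $w_+-w_-=\epsilon D_t^\alpha(\epsilon)$ --- into $\epsilon^2\sigma_{(D_t^\alpha(\epsilon))}(x_t)$ and $\epsilon^2 b_{(D_t^\alpha(\epsilon))}(x_t)$, and, after $w_\pm\to\xi_t$, $\tilde z_\pm\to x_t$ and the uniform--in--$\alpha$ continuity of $\sigma_{xx},b_{xx}$, the remaining $\epsilon^2$--coefficients add up to exactly the drift and diffusion in (\ref{itoeta}); this is precisely why (\ref{itoeta}) is written that way. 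Consequently the It\^o equation for $D_t^\alpha(\epsilon)$ differs from (\ref{itoeta}) by a term Lipschitz in the state with constant $K_0$ plus a remainder tending to $0$, and Lemma~\ref{momentlemma}(2) --- using for the remainder terms, which are Lipschitz only with a state--dependent constant, the indicator--splitting device of Theorem~\ref{thm1}(2) together with the moment bounds of part~(1), (\ref{res2b1}) and (\ref{res2b1'}) --- yields (\ref{res2b3}). The hard part is exactly this: arranging the second--order expansion so that the surviving $\epsilon^2$--terms reassemble into (\ref{itoeta}), and controlling the numerous remainders uniformly in $\alpha$ and $\epsilon$, for which the doubled integrability (\ref{cond1b}) and part~(1) are essential.
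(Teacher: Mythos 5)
Your proposal is correct and, for part (2), follows the paper's own route: the same central-difference/Taylor bookkeeping in which the first-order pieces recombine into $\sigma_{(\eta_t(\epsilon))}(x_t)$ and $b_{(\eta_t(\epsilon))}(x_t)$ while the Lagrange remainders at intermediate points converge to the $(\xi)(\xi)$-terms of (\ref{itoeta}) (the paper displays precisely this identity for $\sigma$ and then says ``mimic the proof of (\ref{res1b2})''), followed by Lemma \ref{momentlemma}(2) with the indicator-splitting device and the doubled integrability (\ref{cond1b}). The only genuine divergence is in part (1): the paper estimates the two quasiderivatives sequentially --- it first asserts $\sup_\alpha E\sup_{t\le\gamma\wedge T}|\xi_t|^{2p}<\infty$ from (\ref{cond2}) and (\ref{cond1a}) and then treats $\eta_t$ by ``mimicking the proof of (\ref{res1a})'' with $|\xi_t|^2+K_t^\alpha$ playing the role of the inhomogeneous term --- whereas you apply Lemma \ref{momentlemma}(1) once to the enlarged process $(\xi_t,\xi_t\otimes\xi_t,\eta_t)$. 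Your version is arguably the more careful one: under (\ref{cond2}) alone, taking square roots gives $|r_t|+|\pi_t|+|P_t|\le N(|\xi_t|+|\eta_t|^{1/2}+(K_t^\alpha)^{1/2})$, so the purely-$\xi$ first step of the sequential argument is not literally an application of Theorem \ref{thm1}(1) (its ``constant'' term involves $\eta_t$, whose moments are not yet known); the joint process absorbs this coupling cleanly via $|\eta_t|^{1/2}\le\tfrac12(|\eta_t|+1)$, at the cost of a slightly longer verification of (\ref{condone}) for the $\xi\otimes\xi$ block. Both routes yield the same conclusions, and the remaining details of your part (2) (initial discrepancies $\tfrac{\epsilon}{2}\eta$ and $D_0^\alpha(\epsilon)=\eta$, the expansions of $\sqrt{1\pm2\epsilon r_t+\epsilon^2\hat r_t}$, $e^{\pm\epsilon P_t}$, $e^{\epsilon^2\hat P_t/2}$) match (\ref{itoeta}) term by term.
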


\begin{proof}
Again, we drop superscripts $\alpha$, $\alpha_t$, etc., when this will cause no confusion.

The inequality (\ref{res2a}) can be proved by observing that (\ref{cond2}) and (\ref{cond1a}) imply that
$$\sup_{\alpha\in\mathfrak A}E^\alpha_\xi\sup_{t\le\gamma\wedge T}|\xi_t|^{2p}<\infty$$
and then mimicking the proof of (\ref{res1a}).

The equations (\ref{res2b1}) and (\ref{res2b2}) are obtained by repeating the proof of (\ref{res1b1}) and (\ref{res1b2}). The equation (\ref{res2b1'}) is obvious once we get (\ref{res2b1}).

To proof (\ref{res2b3}), we observe that, for example, 
\begin{align*}
&\frac{\sigma(z_t(\epsilon))-2\sigma(x_t)+\sigma(z_t(-\epsilon))}{\epsilon^2}\\
=&\frac{1}{\epsilon^2}[\sigma_{(z_t(\epsilon)-x_t)}(x_t)+\frac{1}{2}\sigma_{(z_t(\epsilon)-x_t)(z_t(\epsilon)-x_t)}( z^*_t(\epsilon))\\
&+\sigma_{(z_t(-\epsilon)-x_t)}(x_t)+\frac{1}{2}\sigma_{(z_t(-\epsilon)-x_t)(z_t(-\epsilon)-x_t)}(z^*_t(-\epsilon))]\\
=&\sigma_{(\eta_t(\epsilon))}+\frac{1}{2}[\sigma_{(\xi_t(\epsilon))(\xi_t(\epsilon))}( z^*_t(\epsilon))+\sigma_{(\xi_t(-\epsilon))(\xi_t(-\epsilon))}( z^*_t(-\epsilon))],
\end{align*}
where
$$\eta_t(\epsilon)=\frac{z_t(\epsilon)-2x_t+z_t(-\epsilon)}{\epsilon^2},\qquad\xi_t(\epsilon)=\frac{z_t(\epsilon)-x_t}{\epsilon},$$
$z^*_t(\epsilon)$ is a point on the straight line segment with endpoints $x_t$ and $z_t(\epsilon)$, and $z^*_t(-\epsilon)$ is a point on the straight line segment with endpoints $x_t$ and $z_t(-\epsilon)$.

It follows that
\begin{align*}
&\|\frac{\sigma(z_t(\epsilon))-2\sigma(x_t)+\sigma(z_t(-\epsilon))}{\epsilon^2}-\sigma_{(\eta_t)}(x_t)-\sigma_{(\xi_t)(\xi_t)}(x_t)\|\\
\le&K_0|\frac{z_t(\epsilon)-2x_t+z_t(-\epsilon)}{\epsilon^2}-\eta_t|\\
&+\frac{1}{2}|\frac{z_t(\epsilon)-x_t}{\epsilon}|^2\Big(\|\sigma_{xx}( z^*_t(\epsilon))-\sigma_{xx}(x_t)\|+\|\sigma_{xx}( z^*_t(-\epsilon))-\sigma_{xx}(x_t)\|\Big)I_{|z_t(\epsilon)-x_t|\le\delta}\\
&+\|\sigma\|_{2,D}|\frac{z_t(\epsilon)-x_t}{\epsilon}|^2\Big(I_{|z_t(\epsilon)-x_t|>\delta}+I_{|z_t(-\epsilon)-x_t|>\delta}\Big)\\
&+K_0|\frac{z_t(\epsilon)-x_t}{\epsilon}-\xi_t|^2+K_0|\frac{z_t(-\epsilon)-x_t}{\epsilon}-\xi_t|^2.
\end{align*}
It remains to mimic the proof of (\ref{res1b2}).
\end{proof}

We end up this section by showing a convergence result about the stopping times which will be applied in the next section.

\begin{theorem}
Let $\delta$ be a positive constant such that $D_\delta=\{x\in D:\psi>\delta\}$ is nonempty, and $\delta_1,\delta_2$ be positive constants satisfying $\delta_1<\delta_2$. Let $D_{\delta_1}^{\delta_2}=\{x\in D:\delta_1<\psi<\delta_2\}$. Then for any $x\in D$, if (\ref{res1b1}) holds with 
$$\gamma^\alpha(\epsilon)=\tau_D^{\alpha,x}\wedge\bar \tau_D^{\alpha,x+\epsilon\xi}(\epsilon),$$
 for $p=1,p'=0$ and $\forall T\in [1,\infty)$, then we have
\begin{equation}\label{stopping1}
\lim_{\epsilon\downarrow0}\sup_{\alpha\in\mathfrak A}E(\tau^{\alpha,x}_D-\tau^{\alpha,x}_D\wedge\bar\tau^{\alpha,x+\epsilon\xi}_D(\epsilon))=0.
\end{equation}
For any $x\in D$, if (\ref{res2b1}) and (\ref{res2b1'}) hold with 
$$\gamma^\alpha(\epsilon)=\tau_D^{\alpha,x}\wedge\hat \tau_D^{\alpha,x(\epsilon)}(\epsilon) \mbox{ and }\gamma^\alpha(-\epsilon)=\tau_D^{\alpha,x}\wedge\hat \tau_D^{\alpha,x(-\epsilon)}(-\epsilon),$$
 respectively, for $p=1,p'=0$ and $\forall T\in [1,\infty)$, then we have
\begin{equation}\label{stopping2}
\lim_{\epsilon\downarrow0}\sup_{\alpha\in\mathfrak A}E(\tau^{\alpha,x}_D-\tau^{\alpha,x}_D\wedge\hat\tau^{\alpha,x(\epsilon)}_D(\epsilon)\wedge\hat\tau^{\alpha,x(-\epsilon)}_D(\epsilon))=0.
\end{equation}
The statement still holds when replacing $D$ by $D_\delta$ or $D_{\delta_1}^{\delta_2}$, provided that $\delta_2$ is sufficiently small.
\end{theorem}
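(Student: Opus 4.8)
The plan is to reduce each limit to the $L^1$-convergence of processes already furnished by (\ref{res1b1}), (\ref{res2b1}), (\ref{res2b1'}), using the function $\psi$ to absorb the excess exit time. The starting point is that, since $\sup_\alpha L^\alpha\psi\le-1$ on $D$, It\^o's formula makes $\psi(x^{\alpha,x}_{t\wedge\tau})+(t\wedge\tau)$ (with $\tau=\tau^{\alpha,x}_D$) a local supermartingale bounded below, hence a nonnegative supermartingale; optional stopping then gives, for every stopping time $\sigma\le\tau^{\alpha,x}_D$,
\[
E(\tau^{\alpha,x}_D-\sigma)\le E\,\psi(x^{\alpha,x}_\sigma),
\]
the passage to unbounded $\sigma$ being routine since $0\le\psi\le R_0:=\sup_D\psi<\infty$. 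Taking $\sigma=0$ yields $\sup_\alpha E\,\tau^{\alpha,x}_D\le\psi(x)\le R_0$, so $\tau^{\alpha,x}_D<\infty$ a.s.\ and $\sup_\alpha P(\tau^{\alpha,x}_D>T)\le R_0/T$ for every $T>0$.

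For (\ref{stopping1}), fix $x\in D$, abbreviate $\tau=\tau^{\alpha,x}_D$, $\bar\tau=\bar\tau^{\alpha,x+\epsilon\xi}_D(\epsilon)$, and apply the displayed bound with $\sigma=\gamma^\alpha(\epsilon)=\tau\wedge\bar\tau$, so $E(\tau-\tau\wedge\bar\tau)\le E\,\psi(x^{\alpha,x}_{\tau\wedge\bar\tau})$. I would then bound $\psi(x^{\alpha,x}_{\tau\wedge\bar\tau})$ pathwise: it is $0$ on $\{\tau\le\bar\tau\}$ since $\psi(x^{\alpha,x}_\tau)=0$; on $\{\bar\tau<\tau\}$ the perturbed process is on $\partial D$ at time $\bar\tau$, so $\psi(y^{\alpha,x+\epsilon\xi}_{\bar\tau}(\epsilon))=0$ and hence $\psi(x^{\alpha,x}_{\bar\tau})\le N|x^{\alpha,x}_{\bar\tau}-y^{\alpha,x+\epsilon\xi}_{\bar\tau}(\epsilon)|$ with $N=N(d,d_1,K_0)$. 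Fixing $T\in[1,\infty)$ and splitting $\{\bar\tau<\tau\}$ into $\{\bar\tau\le T\}$ and $\{\bar\tau>T\}$: on the former $\bar\tau=\gamma^\alpha(\epsilon)\wedge T$, so the right side is at most $\sup_{t\le\gamma^\alpha(\epsilon)\wedge T}|x^{\alpha,x}_t-y^{\alpha,x+\epsilon\xi}_t(\epsilon)|$; on the latter $\tau>T$, and the right side is at most $N\,\mathrm{diam}(\bar D)$. This yields
\[
\sup_{\alpha\in\mathfrak A}E(\tau^{\alpha,x}_D-\tau^{\alpha,x}_D\wedge\bar\tau^{\alpha,x+\epsilon\xi}_D(\epsilon))\le N\sup_{\alpha\in\mathfrak A}E\sup_{t\le\gamma^\alpha(\epsilon)\wedge T}|x^{\alpha,x}_t-y^{\alpha,x+\epsilon\xi}_t(\epsilon)|+\frac{N\,\mathrm{diam}(\bar D)\,R_0}{T};
\]
sending $T\to\infty$, then $\epsilon\downarrow0$ and invoking (\ref{res1b1}) with $p=1$, $p'=0$, gives (\ref{stopping1}).

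For (\ref{stopping2}) I would run the identical argument with $\sigma=\tau\wedge\hat\tau^{\alpha,x(\epsilon)}_D(\epsilon)\wedge\hat\tau^{\alpha,x(-\epsilon)}_D(-\epsilon)$, distinguishing on $\{\sigma<\tau\}$ whether the minimum is attained by $\hat\tau^{\alpha,x(\epsilon)}_D(\epsilon)$ or by $\hat\tau^{\alpha,x(-\epsilon)}_D(-\epsilon)$ and bounding $\psi(x^{\alpha,x}_\sigma)$ by $N|x^{\alpha,x}_\sigma-z^{\alpha,x(\epsilon)}_\sigma(\epsilon)|$, respectively $N|x^{\alpha,x}_\sigma-z^{\alpha,x(-\epsilon)}_\sigma(-\epsilon)|$, then using (\ref{res2b1}), respectively (\ref{res2b1'}), with $p=1$, $p'=0$; the $\{\tau>T\}$ tail is treated exactly as before. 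For the subdomains: when $D$ is replaced by $D_\delta=\{\psi>\delta\}$ the whole argument goes through with $\psi$ replaced by $\psi-\delta$, which is positive in $D_\delta$, vanishes on $\partial D_\delta\subseteq\{\psi=\delta\}$, and still satisfies $\sup_\alpha L^\alpha(\psi-\delta)\le-1$. When $D$ is replaced by $D_{\delta_1}^{\delta_2}=\{\delta_1<\psi<\delta_2\}$ I would use $u:=(\psi-\delta_1)(\delta_2-\psi)$, positive in $D_{\delta_1}^{\delta_2}$ and vanishing on its boundary, for which a direct computation gives
\[
L^\alpha u=(\delta_1+\delta_2-2\psi)\,L^\alpha\psi-2\,(a^\alpha\psi_x,\psi_x).
\]
On $D_{\delta_1}^{\delta_2}$ the first term has absolute value $\le\delta_2\sup_{\alpha,D}|L^\alpha\psi|\le N\delta_2$, while Assumption \ref{nondeg} together with $|\psi_x|\ge1$ on $\partial D$ and the uniform-in-$\alpha$ continuity of $a^\alpha$ forces $(a^\alpha\psi_x,\psi_x)\ge\delta_0/2$ on all of $\{\psi<\delta_2\}$ once $\delta_2$ is small; hence $\sup_\alpha L^\alpha u\le N\delta_2-\delta_0\le-\delta_0/2<0$ there, so $(2/\delta_0)u(x^{\alpha,x}_{t\wedge\tau})+(t\wedge\tau)$ is the required supermartingale. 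The rest is as above, using that the exit times from $D_\delta$ and $D_{\delta_1}^{\delta_2}$ are dominated by those from $D$, so that the needed versions of (\ref{res1b1}), (\ref{res2b1}), (\ref{res2b1'}) for these subdomains follow from those for $D$ by monotonicity of the suprema in the stopping times.

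The main obstacle is conceptual: exit times are genuinely discontinuous functionals of the path, so path convergence alone cannot yield exit-time convergence, and only the one-sided inequality has a chance. The crux is the Lyapunov bound $E(\tau-\sigma)\le E\,\psi(x_\sigma)$, which reduces the excess time to $E\,\psi(x^{\alpha,x}_{\tau\wedge\bar\tau})$; this is small precisely because $\psi$ vanishes at the perturbed process's exit point, so the quantity is controlled by the $L^1$-small distance between the two processes, the remaining truncation at $T$ needing only the uniform first-moment bound on the exit times. The one step requiring real care beyond bookkeeping is verifying that $u=(\psi-\delta_1)(\delta_2-\psi)$ is an honest Lyapunov function on the annular region, which is where Assumption \ref{nondeg} enters and why "$\delta_2$ sufficiently small" is imposed.
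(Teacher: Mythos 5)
Your argument is correct and is essentially the paper's own proof: the same Lyapunov bound $E(\tau-\gamma)\le E\,\psi(x_\gamma)$ coming from $\sup_\alpha L^\alpha\psi\le-1$, the same identification $\psi(x_{\bar\tau})=\psi(x_{\bar\tau})-\psi(y_{\bar\tau}(\epsilon))$ on $\{\bar\tau<\tau\}$ reducing everything to (\ref{res1b1}) (resp.\ (\ref{res2b1}), (\ref{res2b1'}) after splitting on which exit time realizes the minimum), the same tail bound $P(\tau>T)\le N/T$, and the same barriers $\psi-\delta$ and $(\psi-\delta_1)(\delta_2-\psi)$ normalized via Assumption \ref{nondeg} for the subdomains. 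The one slip is the order of limits at the end of the first part: since (\ref{res1b1}) is assumed only for each fixed $T$, you must take $\varlimsup_{\epsilon\downarrow0}$ first, obtaining a bound of the form $N/T$ valid for every $T$, and only then let $T\uparrow\infty$; as written (``sending $T\to\infty$, then $\epsilon\downarrow0$'') the first term of your displayed bound is not controlled.
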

\begin{proof}
We drop the subscript $D$ and the argument $\epsilon$ for simplicity of notation. Notice that, for any $\alpha\in\mathfrak A$,
\allowdisplaybreaks\begin{align*}
E\big(\tau^{\alpha,x}-\gamma^\alpha\big)=&E\int_{\gamma^\alpha}^{\tau^{\alpha,x}}1dt\\
\le&-E\int_{\gamma^\alpha}^{\tau^{\alpha,x}}L^\alpha\psi(x_t^{\alpha,x})dt\\
=&-E\Big(\psi\big(x_{\tau^{\alpha,x}}^{\alpha,x}\big)-\psi\big(x_{\gamma^\alpha}^{\alpha,x}\big)\Big)I_{\gamma^\alpha<\tau^{\alpha,x}}\\
=&E\psi\big(x_{\bar\tau^{\alpha,x+ \epsilon\xi}}^{\alpha,x}\big)
I_{\bar\tau^{\alpha,x+ \epsilon\xi}<\tau^{\alpha,x}}\\
=&E\Big(\psi\big(x_{\bar\tau^{\alpha,x+ \epsilon\xi}}^{\alpha,x}\big)-\psi\big(y_{\bar\tau^{\alpha,x+ \epsilon\xi}}^{\alpha,x+ \epsilon\xi}\big)\Big)
I_{\bar\tau^{\alpha,x+ \epsilon\xi}<\tau^{\alpha,x}}\\
\le&E\Big(\psi\big(x_{\bar\tau^{\alpha,x+ \epsilon\xi}}^{\alpha,x}\big)-\psi\big(y_{\bar\tau^{\alpha,x+ \epsilon\xi}}^{\alpha,x+ \epsilon\xi}\big)\Big)
I_{\bar\tau^{\alpha,x+ \epsilon\xi}<\tau^{\alpha,x}\le T}+2K_0P^\alpha_x(\tau>T).
\end{align*}
Due to (\ref{res1b1}), we have
\begin{align*}
&\varlimsup_{ \epsilon\downarrow0}\Big(\sup_\alpha E\Big(\psi\big(x_{\bar\tau^{\alpha,x+ \epsilon\xi}}^{\alpha,x}\big)-\psi\big(y_{\bar\tau^{\alpha,x+ \epsilon\xi}}^{\alpha,x+ \epsilon\xi}\big)\Big)
I_{\bar\tau^{\alpha,x+ \epsilon\xi}<\tau^{\alpha,x}\le T}\Big)\\
\le&\sup_D|\psi_x|\cdot\varlimsup_{ \epsilon\downarrow0}\Big(\sup_\alpha E\Big|x_{\bar\tau^{\alpha,x+ \epsilon\xi}}^{\alpha,x}-y_{\bar\tau^{\alpha,x+ \epsilon\xi}}^{\alpha,x+ \epsilon\xi}\Big|
I_{\bar\tau^{\alpha,x+ \epsilon\xi}<\tau^{\alpha,x}\le T}\Big)\\
=&0.
\end{align*}
Also, notice that for any $\alpha\in\mathfrak A, T\in[1,\infty)$,
$$P^\alpha_x(\tau>T)\le\frac{1}{T}E_x^\alpha\tau\le\frac{1}{T}E_x^\alpha\int_0^{\tau}\Big(-L^\alpha\psi(x_t)\Big)dt=\frac{1}{T}\Big(\psi(x)-\psi(x_{\tau^{\alpha,x}}^{\alpha,x})\Big)\le\frac{K_0}{T}.$$
It turns out that
$$\varlimsup_{\epsilon\downarrow0}\sup_{\alpha\in\mathfrak A}E(\tau^{\alpha,x}_D-\tau^{\alpha,x}_D\wedge\bar\tau^{\alpha,x+\epsilon\xi}_D(\epsilon))\le\frac{2K_0^2}{T}\rightarrow0, \mbox{ as }T\uparrow \infty.$$

To prove (\ref{stopping2}), we just need to notice that for any stopping times $\tau,\gamma_1,\gamma_2$
$$\tau-\tau\wedge\gamma_1\wedge\gamma_2=(\tau-\tau\wedge\gamma_1)I_{\gamma_1<\gamma_2}+(\tau-\tau\wedge\gamma_2)I_{\gamma_1\ge\gamma_2}.$$

By noticing that
$$\psi-\delta=0 \mbox{ on } \partial D_\delta,\qquad \psi-\delta>0,\ \sup_{\alpha\in\mathfrak A}L^\alpha(\psi-\delta)=\sup_{\alpha\in\mathfrak A}L^\alpha\psi\le-1\mbox{ in }D_\delta,$$
we see that the statement is true in the subdomain $D_\delta$.

Similarly, notice that
$$(\psi-\delta_1)(\delta_2-\psi)=0 \mbox{ on } \partial D_{\delta_1}^{\delta_2},\qquad (\psi-\delta_1)(\delta_2-\psi)>0\mbox{ in }D_{\delta_1}^{\delta_2},$$
\begin{align*}
L^\alpha((\psi-\delta_1)(\delta_2-\psi))=&(\delta_1+\delta_2-2\psi)L^\alpha\psi-2(a^\alpha\psi_x,\psi_x)\\
\le&(\delta_1+\delta_2)|L^\alpha\psi|-2|\psi_x^*\sigma^\alpha|^2\mbox{ in }D_{\delta_1}^{\delta_2},\forall\alpha\in\mathfrak A.
\end{align*}
On $\partial D$ it holds that $\psi_x=|\psi_x|n$, where $n(x)$ is the unit inward normal vector at $x\in\partial D$. So due to Assumption \ref{nondeg} and the compactness of $\partial D$,
$$|\psi_x^*\sigma^\alpha|^2=2|\psi_x|^2(a^\alpha n,n)\ge 2|\psi_x|^2\delta_0\ge2\delta_0' \mbox{ on }\partial D,$$
where $\delta_0'$ is a positive constant. By continuity
$$|\psi_x^*\sigma^\alpha|^2\ge\delta_0'\mbox{ in }D_{\delta_1}^{\delta_2},$$
if $\delta_1$ and $\delta_2$ are sufficiently small. It turns out that
$$\sup_{\alpha\in\mathfrak A}L^\alpha\frac{(\psi-\delta_1)(\delta_2-\psi)}{\delta_0'}\le-1,$$
when $\delta_1$ and $\delta_2$ are sufficiently small.
So the statement is still true in the subdomain $D_{\delta_1}^{\delta_2}$ when $\delta_1,\delta_2$ are sufficiently small.

\end{proof}

\section{Proof of Theorem \ref{4c}}

Before proving the main theorem, we state two remarks and one lemma. Remarks \ref{lemma2} and \ref{lemma3} are about two reductions of the problem, and Lemma \ref{lemma4} will be used when estimating the second derivatives. They are nonlinear counterparts of Remarks 3.3 and 3.4 and Lemma 3.2 in \cite{quasi-linear}, and there is no essential change when extending them from linear case to nonlinear case.

\begin{remark}\label{lemma2}
Without loss of generality, we may assume that $c^\alpha\ge1, \forall \alpha\in\mathfrak A$, and replace inequality (\ref{ineq}) by
\begin{equation}
\begin{gathered}
\big\|\sigma^\alpha_{(y)}(x)+(\rho^\alpha(x),y)\sigma^\alpha(x)+\sigma^\alpha(x)Q^\alpha(x,y)\big\|^2+\\
\ 2\big(y,b^\alpha_{(y)}(x)+2(\rho^\alpha(x),y)b^\alpha(x)\big)\le c^\alpha(x)-1+M^\alpha(x)\big(a^\alpha(x)y,y\big).
\end{gathered}
\end{equation}
\end{remark}

\begin{remark}\label{lemma3}
Without loss of generality, we may assume that $v\in C^1({\bar D})$ and $f^\alpha,g\in C^1(\bar D)$ when investigating first derivatives of $v$, and $v\in C^2({\bar D})$ and $f^\alpha,g\in C^2(\bar D)$ when investigating second derivatives of $v$. 
\end{remark}

\begin{lemma}\label{lemma4}
If $f^\alpha, g\in C^2(\bar D)$, and $v\in C^1(\bar D)$, then for any $y\in\partial D$ we have
\begin{equation}
|v_{(n)}(y)|\le K(|g|_{2,D}+\sup_{\alpha\in A}|f^\alpha|_{0,D})\label{normal},
\end{equation}
where $n$ is the unit inward normal on $\partial D$ and the constant $K$ depends only on $K_0$.
\end{lemma}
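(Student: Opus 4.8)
\textbf{Proof proposal for Lemma \ref{lemma4}.}

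The plan is to control the interior normal derivative of $v$ at a boundary point $y$ by a standard barrier argument adapted to the stochastic optimal control setting. Since $v\le g$ on $\partial D$ and $v$ is continuous up to $\bar D$ (by Remark \ref{lemma3} we may assume $v\in C^1(\bar D)$), the sign of $v_{(n)}(y)$ is governed by how $v$ compares, near $y$, to functions that solve or supersolve the Bellman inequality. First I would produce an \emph{upper} barrier: a smooth function $w^+$ on $\bar D$ with $w^+\ge g$ on $\partial D$, $w^+(y)=g(y)$, and $\sup_{\alpha\in A}[L^\alpha w^+-c^\alpha w^++f^\alpha]\le0$ in $D$. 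The natural candidate is $w^+=g-C\psi$ (extending $g$ to a $C^2(\bar D)$ function, which Remark \ref{lemma3} permits), with $C$ a large constant: then $w^+\ge g$ in $D$ since $\psi>0$, $w^+=g$ on $\partial D$, and because $g\in C^2(\bar D)$ with $|g|_{2,D}\le K$, $c^\alpha\ge0$, $\sup_\alpha L^\alpha\psi\le-1$, and the coefficients are bounded by $K_0$, we get
\[
\sup_{\alpha\in A}[L^\alpha w^+-c^\alpha w^++f^\alpha]\le K K_0+\sup_\alpha|f^\alpha|_{0,D}-C\,(\inf_\alpha(-L^\alpha\psi))\le KK_0+\sup_\alpha|f^\alpha|_{0,D}-C\le0
\]
once $C=N(K_0)(|g|_{2,D}+\sup_\alpha|f^\alpha|_{0,D})$. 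By the probabilistic representation (\ref{vax}) and Dynkin/It\^o's formula applied to $w^+(x_t^{\alpha,x})e^{-\phi_t^{\alpha,x}}$ up to $\tau^{\alpha,x}$, together with $w^+=g$ on $\partial D$, one gets $v^\alpha(x)\le w^+(x)$ for every $\alpha$, hence $v\le w^+$ in $\bar D$; since $v(y)=w^+(y)=g(y)$, it follows that $v_{(n)}(y)\ge w^+_{(n)}(y)=g_{(n)}(y)-C\psi_{(n)}(y)\ge -N(K_0)(|g|_{2,D}+\sup_\alpha|f^\alpha|_{0,D})$, using $|\psi|_{2,D},|g|_{1,D}\le K_0$.

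For the matching \emph{lower} bound on $v_{(n)}(y)$ I would exhibit a \emph{lower} barrier $w^-$ with $w^-\le g$ on $\partial D$, $w^-(y)=g(y)$, and $v\ge w^-$ in $\bar D$. Here one cannot simply use $w^-=g+C\psi$ as a subsolution of the fully nonlinear equation in general, but it suffices to use a single admissible control: fix any $\alpha_0\in A$ (constant control) and note $v\ge v^{\alpha_0}$, and $v^{\alpha_0}$ is the solution of the \emph{linear} Dirichlet problem $L^{\alpha_0}v^{\alpha_0}-c^{\alpha_0}v^{\alpha_0}+f^{\alpha_0}=0$ in $D$, $v^{\alpha_0}=g$ on $\partial D$. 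For this linear problem a classical barrier $w^-=g+C\psi$ with $C=N(K_0)(|g|_{2,D}+\sup_\alpha|f^\alpha|_{0,D})$ is a subsolution: $L^{\alpha_0}w^--c^{\alpha_0}w^-+f^{\alpha_0}\ge -KK_0 - \sup_\alpha|f^\alpha|_{0,D}+C\ge0$, again because $-L^{\alpha_0}\psi\ge1$; the maximum principle for the linear operator gives $v^{\alpha_0}\ge w^-$, so $v\ge w^-$ in $\bar D$, $v(y)=w^-(y)=g(y)$, and therefore $v_{(n)}(y)\le w^-_{(n)}(y)=g_{(n)}(y)+C\psi_{(n)}(y)\le N(K_0)(|g|_{2,D}+\sup_\alpha|f^\alpha|_{0,D})$. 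Combining the two one-sided estimates yields (\ref{normal}).

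The only delicate point is the justification that the Dynkin-formula comparison is valid all the way up to the exit time despite the infinite horizon and the mere $C^{0,1}(\bar D)$ (rather than $C^2$) regularity of $g$ a priori — but Remark \ref{lemma3} lets us assume $v,f^\alpha,g\in C^2(\bar D)$ here, and the finiteness of $E_x^\alpha\tau^{\alpha,x}\le\psi(x)\le K_0$ (shown in the previous section via $-L^\alpha\psi\ge1$) together with $c^\alpha\ge0$ controls all the integrals and the discounting, so the stopped process is a genuine supermartingale and the limit $t\to\tau^{\alpha,x}$ passes through by dominated convergence. So the main obstacle is essentially bookkeeping: choosing the constant $C$ uniformly in $\alpha$ (which works because all coefficient norms are bounded by $K_0$ and $\sup_\alpha(-L^\alpha\psi)\ge1$), and confirming that the barrier inequalities are uniform in $\alpha$ in both the supremum (upper barrier) and single-control (lower barrier) arguments.
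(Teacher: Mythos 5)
Your overall strategy is the right one and is essentially the argument the paper relies on (the lemma is quoted from Lemma 3.2 of \cite{quasi-linear}, whose proof is exactly this global barrier comparison with $g\pm C\psi$ and the probabilistic representation). However, as written your proof contains a systematic sign confusion that makes several intermediate steps false. Since $\psi>0$ in $D$ and $\psi=0$ on $\partial D$, the function $g-C\psi$ lies \emph{below} $g$ in $D$ (your claim ``$w^+\ge g$ in $D$ since $\psi>0$'' is backwards), and since $L^\alpha\psi\le-1$ we have $L^\alpha(g-C\psi)=L^\alpha g-CL^\alpha\psi\ge L^\alpha g+C$, so $g-C\psi$ is a \emph{sub}solution, not a supersolution: your displayed inequality $\sup_\alpha[L^\alpha w^+-c^\alpha w^++f^\alpha]\le\cdots-C$ is only valid for $g+C\psi$. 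The roles are exactly reversed: $w^+=g+C\psi$ is the upper barrier (supersolution uniformly in $\alpha$, hence $v\le w^+$ via Dynkin's formula applied to every $\alpha\in\mathfrak A$), and $w^-=g-C\psi$ is the lower barrier (subsolution for each fixed $\alpha_0$, hence $v\ge v^{\alpha_0}\ge w^-$). Relatedly, your remark that the sup-type nonlinearity causes trouble is attached to the wrong side: a subsolution of $\sup_\alpha F^\alpha=0$ only needs one good $\alpha$, so the lower barrier is the easy one; it is the upper barrier that requires the inequality uniformly in $\alpha$, which holds here because $C$ depends only on $K_0$.

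You then compound this with a second reversed inequality at the derivative step: if $u\le w$ in $\bar D$ with $u(y)=w(y)$ at $y\in\partial D$ and $n$ is the \emph{inward} normal, then $(u(y+tn)-u(y))/t\le(w(y+tn)-w(y))/t$ for small $t>0$, so $u_{(n)}(y)\le w_{(n)}(y)$ — the opposite of what you assert in both paragraphs. These two errors cancel, which is why your final one-sided bounds $v_{(n)}(y)\ge g_{(n)}(y)-C\psi_{(n)}(y)\ge-N(K_0)(|g|_{2,D}+\sup_\alpha|f^\alpha|_{0,D})$ and $v_{(n)}(y)\le g_{(n)}(y)+C\psi_{(n)}(y)\le N(K_0)(|g|_{2,D}+\sup_\alpha|f^\alpha|_{0,D})$ come out correct, but the chain of reasoning leading to them is not valid as stated. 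Once the two barriers are relabeled and the comparison of normal derivatives is taken in the correct direction, the proof is complete and matches the intended argument; the remaining points you raise (uniformity of $C$ in $\alpha$, $E^\alpha_x\tau\le\psi(x)\le K_0$ justifying Dynkin's formula up to $\tau$) are handled correctly.
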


Let $\delta$ and $\lambda$ be constants satisfying $0<\delta<\lambda^2<\lambda<1$ and that the three sets defined below are nonempty:
\begin{align*}
D_\delta&:=\{x\in D:\delta<\psi(x)\}\\
D_\delta^\lambda&:=\{x\in D:\delta<\psi(x)<\lambda\}\\
D_{\lambda^2}&:=\{x\in D:\lambda^2<\psi(x)\}
\end{align*}
For each $\alpha\in\mathfrak A$, we use the same quasiderivatives $\xi_t^{\alpha,\xi}, \eta_t^{\alpha,\eta}$ and barrier functions $\mathrm B_1(x,\xi), \mathrm B_2(x,\xi)$ constructed in \cite{quasi-linear}. See Remark 3.5 in \cite{quasi-linear} for the motivation of $\mathrm B_1(x,\xi)$ and $\mathrm B_2(x,\xi)$.

Their properties are collected in the following two lemmas.

\begin{lemma} \label{3l1}
In $D_\delta^\lambda$, introduce
$$
\varphi(x)=\lambda^2+\psi(1-\frac{1}{4\lambda}\psi), \ \ \mathrm{B}_1(x,\xi)=\big[\lambda+\sqrt{\psi}(1+\sqrt{\psi})\big]|\xi|^2+K_1\varphi^\frac{3}{2}\pxsop,
$$
where $K_1\in[1,\infty)$ is a constant only depending on $K_0$.

For each $\alpha$, we define the first and second quasiderivatives by (\ref{itoxi}) and (\ref{itoeta}), in which 
\allowdisplaybreaks\begin{align*}
&r(x,\xi):=\rho(x,\xi)+\frac{\psi_{(\xi)}}{\psi},\ \ r_t:=r(x_t,\xi_t),\\
&\mbox{ with }\rho(x,\xi):=-\frac{1}{\Upsilon}\sum_{k=1}^{d_1}\psk(\psk)_{(\xi)},\ \ \Upsilon:=\sum_{k=1}^{d_1}\psk^2;\\
&\hat{r}(x,\xi):=\pxsops,\ \ \hat{r}_t:=\hat{r}(x_t,\xi_t);\\
&\pi^k(x,\xi):=\frac{2\psk\psi_{(\xi)}}{\varphi\psi},\ \  k=1,...,d_1,\ \ \pi_t:=\pi(x_t,\xi_t);\\
&P^{ik}(x,\xi):=\frac{1}{\Upsilon}\big[\psk(\mypsi)_{(\xi)}-\mypsi(\psk)_{(\xi)}\big],\ \  i,k=1,...,d_1,\ \ P_t:=P(x_t,\xi_t); \\
&\hat{\pi}_t^k=\hat{P}_t^{ik}=0,\ \ \forall i,k=1,...d_1,\forall t\in[0,\infty).
\end{align*}
where we drop the superscript $\alpha$ or $\alpha_t$ without confusion. Then (\ref{res1a}), (\ref{res1b1}), (\ref{res1b2}), (\ref{res2a}), (\ref{res2b1}), (\ref{res2b1'}), (\ref{res2b2}) and (\ref{res2b3}) all hold for any constants $p\in (0,\infty)$, $p'\in[0,p)$, $T\in [1,\infty)$, $x\in D_\delta^\lambda$, $\xi,\eta\in\Rd$ and stopping times 
$$\gamma^\alpha\le\tau^{\alpha,x}_{D_\delta^\lambda},\ \gamma^\alpha(\epsilon)\le\tau^{\alpha,x}_{D_\delta^\lambda}\wedge\bar\tau^{\alpha,x+\epsilon\xi}_{D_\delta^\lambda}(\epsilon),\ \gamma_2^\alpha(\epsilon)\le\tau_{D_\delta^\lambda}^{\alpha,x}\wedge\hat\tau_{D_\delta^\lambda}^{\alpha,x(\epsilon)}(\epsilon),$$ 
$$\gamma_3^\alpha(\epsilon)\le\tau_{D_\delta^\lambda}^{\alpha,x}\wedge\hat\tau_{D_\delta^\lambda}^{\alpha,x(\epsilon)}(\epsilon)\wedge\hat\tau_{D_\delta^\lambda}^{\alpha,x(-\epsilon)}(-\epsilon),$$ where $x(\epsilon)=x+\epsilon\xi+\frac{\epsilon^2}{2}\eta$.
 
When $\lambda$ is sufficiently small, for $x\in D^\lambda_\delta$, $\xi\in\Rd$ and $\eta=0$, we have
\begin{enumerate}
\item For each $\alpha\in\mathfrak A$, $\mathrm{B}_1(x^{\alpha,x}_t,\xi^{\alpha,\xi}_t)$ and $\sqrt{\mathrm{B}_1(x^{\alpha,x}_t,\xi^{\alpha,\xi}_t)}$ are local supermartingales on $[0,\tau_1^\delta]$, where $\tau_1^\delta=\tau^{\alpha,x}_{D^\lambda_\delta}$;
\item $\displaystyle{\sup_{\alpha\in\mathfrak A}E^\alpha_{x,\xi}\int_0^{\tau_1^\delta}|\xi_t|^2+\pxtsops dt\le N\mathrm{B}_1(x,\xi)}$;
\item $\displaystyle{\sup_{\alpha\in\mathfrak A}E^\alpha_\xi\sup_{t\le\tau_1^\delta}|\xi_t|^2\le N\mathrm{B}_1(x,\xi)}$;
\item $\displaystyle{\sup_{\alpha\in\mathfrak A}E^\alpha_0|\eta_{\tau_1^\delta}|\le \sup_{\alpha\in\mathfrak A}E^\alpha_0\sup_{t\le\tau_1^\delta}|\eta_t|\le N\mathrm{B}_1(x,\xi)}$;
\item $\displaystyle{\sup_{\alpha\in\mathfrak A}E^\alpha_0\Big(\int_0^{\tau_1^\delta}|\eta_t|^2 dt\Big)^{\frac{1}{2}}\le N\mathrm{B}_1(x,\xi)}$;
\end{enumerate}
where $N$ is a constant depending on $K_0$ and $\epsilon$.
\end{lemma}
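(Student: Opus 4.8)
The plan is to treat two groups of assertions in turn. The convergence statements (\ref{res1a})--(\ref{res2b3}) follow from Theorems \ref{thm1} and \ref{thm2} once their hypotheses are verified for the particular processes $r_t,\hat r_t,\pi_t,P_t$ (with $\hat\pi_t=\hat P_t=0$) above. To this end, first fix $\lambda$ small enough that $\Upsilon=|\psi_x^*\sigma^\alpha|^2=2(a^\alpha\psi_x,\psi_x)$ is bounded below by a positive constant on $D_\delta^\lambda$, uniformly in $\alpha$: on $\partial D$ one has $\psi_x=|\psi_x|n$ with $|\psi_x|\ge1$, so $\Upsilon\ge2\delta_0$ there by Assumption \ref{nondeg}, and $\Upsilon$ is continuous in $x$ uniformly in $\alpha$ by (\ref{tiao}), whence by compactness the bound persists on $\{0\le\psi<\lambda\}\supset D_\delta^\lambda$ for $\lambda$ small, exactly as in the stopping-time theorem at the end of Section 3. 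On $D_\delta^\lambda$ one also has $\psi>\delta$ and $\varphi\ge\lambda^2$, and it follows from $\psi\in C^4$ and (\ref{tiao}) that $\rho(x,\xi),\psi_{(\xi)}/\psi,\pi(x,\xi),P(x,\xi)$ are bounded by $N(K_0,\delta,\lambda)|\xi|$ on $D_\delta^\lambda$ and $\hat r(x,\xi)$ by $N(K_0,\delta,\lambda)|\xi|^2$, all of them linear (resp.\ quadratic) in $\xi$; in particular the coefficients of (\ref{itoxi}) are linear in $\xi_t$ with coefficients bounded on $D_\delta^\lambda$ uniformly in $\alpha$, so the system $(x_t,\xi_t,\eta_t)$ has a unique non-exploding solution on $[0,\tau^{\alpha,x}_{D_\delta^\lambda})$ and (\ref{rpip1}), (\ref{rpip2}) hold. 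Taking $K_t^\alpha\equiv0$ and $K=N(K_0,\delta,\lambda)$ makes (\ref{cond1}), (\ref{cond2}) hold and (\ref{cond1a}), (\ref{cond1b}) automatic, and Theorems \ref{thm1} and \ref{thm2} then give (\ref{res1a})--(\ref{res2b3}) for each of the listed stopping times.

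For the supermartingale property (1), apply It\^o's formula to $\mathrm{B}_1(x_t,\xi_t)$ along (\ref{itox}) and (\ref{itoxi}), writing $d\mathrm{B}_1(x_t,\xi_t)=\Lambda_t\,dt+dm_t$ with $m_t$ a local martingale. After a long but mechanical expansion, $\Lambda_t$ is a sum of the $L^\alpha$-action on the weights $\lambda+\sqrt\psi(1+\sqrt\psi)$, $K_1\varphi^{3/2}$ and on the factor $\psi_{(\xi)}^2/\psi$, of the quantity $2\big(\xi_t,b^{\alpha_t}_{(\xi_t)}+2r_tb^{\alpha_t}-\sigma^{\alpha_t}\pi_t\big)+\big\|\sigma^{\alpha_t}_{(\xi_t)}+r_t\sigma^{\alpha_t}+\sigma^{\alpha_t}P_t\big\|^2$ carried by the $|\xi_t|^2$ term, and of cross terms. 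Writing $r_t=(\rho^{\alpha_t}(x_t),\xi_t)+\psi_{(\xi_t)}/\psi$, the ``$(\rho,\xi_t)$-part'' of that quantity is precisely the left-hand side of (\ref{ineq}) with $y=\xi_t$ and $Q^{\alpha_t}(x_t,\xi_t)=P_t$, hence $\le|\xi_t|^2(c^{\alpha_t}-1)+M^{\alpha_t}(a^{\alpha_t}\xi_t,\xi_t)$ by Assumption \ref{inequality} in the form of Remark \ref{lemma2}. The point is that $\rho,\pi,P$ are chosen exactly as in \cite{quasi-linear} so that the term $\sum_k\psk(\psk)_{(\xi)}$ and all cross terms built from $\psi_{(\xi_t)}/\psi$ cancel and a negative term of order $\psi_{(\xi_t)}^2/\psi^2$ is produced; the $c^{\alpha_t}|\xi_t|^2$ above is killed by the $-c^{\alpha_t}v$ of the equation after the reductions, while $M^{\alpha_t}(a^{\alpha_t}\xi_t,\xi_t)$ and the positive leftover $\propto\|\sigma^{\alpha_t}\|^2\psi_{(\xi_t)}^2/\psi^2$ are absorbed using $L^\alpha\psi\le-1$ and the lower bound on $(a^\alpha\psi_x,\psi_x)$ near $\partial D$ — the place where the smallness of $\lambda$ enters — together with $K_1=K_1(K_0)$ large. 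This gives, on $D_\delta^\lambda$ and uniformly in $\alpha$, $\Lambda_t\le-N^{-1}\big(|\xi_t|^2+\psi_{(\xi_t)}^2/\psi^2\big)\le0$, hence the local supermartingale property of $\mathrm{B}_1(x_t,\xi_t)$ on $[0,\tau_1^\delta]$; It\^o applied to $\sqrt{\mathrm{B}_1(x_t,\xi_t)}$ (positive for $\xi\ne0$, trivial otherwise), using $\Lambda_t\le0$ and the nonnegativity of the quadratic-variation correction, gives the same for $\sqrt{\mathrm{B}_1(x_t,\xi_t)}$.

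The bounds (2)--(5) then follow routinely. Localizing $\mathrm{B}_1(x_{t\wedge\tau_1^\delta},\xi_{t\wedge\tau_1^\delta})+N^{-1}\int_0^{t\wedge\tau_1^\delta}(|\xi_s|^2+\psi_{(\xi_s)}^2/\psi^2)\,ds$ and passing to the limit with Fatou gives (2), with a $\delta$-free constant since the pointwise bound on $\Lambda_t$ is $\delta$-free. Since $\mathrm{B}_1(x,\xi)\ge\lambda|\xi|^2$, the supermartingale property of $\mathrm{B}_1(x_t,\xi_t)$ together with Burkholder--Davis--Gundy applied to $m_t$, whose quadratic variation is controlled by the integral in (2), gives (3) (after a routine absorption argument on $[0,\tau_1^\delta\wedge T]$ followed by $T\to\infty$). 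Finally, with $\eta_0=0$ and $\hat\pi_t=\hat P_t=0$, equation (\ref{itoeta}) is linear in $\eta_t$ with a forcing that is a degree-$\le2$ polynomial in $\xi_t$ with coefficients bounded on $\overline{D_\delta^\lambda}$; It\^o's formula for $|\eta_t|$ (after an $e^{-Nt}$ discounting as in Lemma \ref{momentlemma}, or directly, using $\sup_\alpha E\tau_1^\delta\le K_0$), combined with (2), (3) and the $\sqrt{\mathrm{B}_1}$-supermartingale property, gives $\sup_\alpha E\sup_{t\le\tau_1^\delta}|\eta_t|\le N\mathrm{B}_1(x,\xi)$, i.e.\ (4), and the same argument with BDG on the martingale part of $|\eta_t|^2$ gives (5). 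All constants depend only on $K_0$ (through the now-fixed $\lambda$, hence on $\delta_0$) and on $\epsilon$ via the quasiderivative construction.

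The main obstacle is the drift computation in the second step: showing that, after inserting the tailored $\rho,\pi,P$ and using Assumption \ref{inequality}, $\Lambda_t\le-N^{-1}(|\xi_t|^2+\psi_{(\xi_t)}^2/\psi^2)$ on $D_\delta^\lambda$ for $K_1$ large and $\lambda$ small. This is the nonlinear counterpart of the computation in \cite{quasi-linear}; the genuinely new element is tracking uniformity in $\alpha$ — which is exactly what the uniform bound (\ref{tiao}) and the uniform Assumption \ref{inequality} provide — together with, in (4)--(5), arranging the $\eta$-estimates so that the barrier $\mathrm{B}_1$, rather than $\mathrm{B}_1^2$, controls the first moments of $\eta$.
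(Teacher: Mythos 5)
Your reduction of the convergence statements to Theorems \ref{thm1} and \ref{thm2} is exactly the paper's argument: the only thing to check is (\ref{cond1}) and (\ref{cond2}) with $K_t^\alpha=0$, which follows because $\Upsilon=\sum_k\psi_{(\sigma^k)}^2=2(a^\alpha\psi_x,\psi_x)$ is bounded below on $\{0\le\psi<\lambda\}$ uniformly in $\alpha$ by Assumption \ref{nondeg}, compactness and (\ref{tiao}). For properties (1)--(5) the paper simply cites Lemma 3.3 of \cite{quasi-linear}, observing that the constant there is uniform in $\alpha$; you instead sketch the drift computation, and it is there that your argument goes wrong.

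The gap is that you invoke Assumption \ref{inequality} to control the quantity
$2\big(\xi_t,b_{(\xi_t)}+2r_tb-\sigma\pi_t\big)+\big\|\sigma_{(\xi_t)}+r_t\sigma+\sigma P_t\big\|^2$
in the boundary layer $D_\delta^\lambda$. This cannot work, for three reasons. First, the $\rho(x,\xi)=-\Upsilon^{-1}\sum_k\psi_{(\sigma^k)}(\psi_{(\sigma^k)})_{(\xi)}$ of Lemma \ref{3l1} is a specific linear functional of $\xi$ built from $\psi$ and $\sigma$; it is not the $(\rho^\alpha(x),\xi)$ of Assumption \ref{inequality}, and $P_t$ here is not $Q^\alpha(x_t,\xi_t)$, so the left-hand side of (\ref{ineq}) is simply not what appears in the expansion. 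Second, even if one forced the substitution, the resulting bound $c^{\alpha}|\xi_t|^2+M^{\alpha}(a^{\alpha}\xi_t,\xi_t)$ cannot be absorbed: $\mathrm{B}_1(x_t,\xi_t)$ carries no discount factor $e^{-\phi_t}$ (contrast Lemma \ref{3l2}, where $e^{-\phi_t}\mathrm{B}_2$ is the supermartingale precisely so that $-c^\alpha$ appears in the drift), so there is nothing to ``kill'' $c^{\alpha}|\xi_t|^2$ — your remark that it is killed by the $-c^\alpha v$ of the equation has no meaning inside a drift computation for $\mathrm{B}_1(x_t,\xi_t)$, which does not involve $v$. Third, $\rho^\alpha,Q^\alpha,M^\alpha$ are only assumed bounded on sets $A\times D_\mu$, with bounds that may blow up as one approaches $\partial D$; using them in $D_\delta^\lambda$ would make $N$ depend on $\delta$ and on $M^\alpha$, contradicting both the statement of Lemma \ref{3l1} and the explicit remark after Theorem \ref{4c} that the constants are independent of $\rho^\alpha,Q^\alpha,M^\alpha$. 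The actual mechanism in the boundary layer is entirely different and uses only (\ref{tiao}), $\sup_\alpha L^\alpha\psi\le-1$ and Assumption \ref{nondeg}: the weight $\lambda+\sqrt\psi(1+\sqrt\psi)$ produces, via $L^\alpha\sqrt\psi=\tfrac12\psi^{-1/2}L^\alpha\psi-\tfrac14\psi^{-3/2}(a^\alpha\psi_x,\psi_x)$, a large negative drift of order $\psi^{-3/2}$ near $\partial D$ (this is where the smallness of $\lambda$ enters), while the tailored $\rho,\pi,P$ make the singular $\psi_{(\xi_t)}/\psi$ contributions organize into the $K_1\varphi^{3/2}\psi_{(\xi)}^2/\psi$ term of the barrier. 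Your treatment of (2)--(5) given (1) is plausible, but as written the proof of (1) — which you correctly identify as the main obstacle — would not go through.
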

\begin{proof}
Notice that $\sup_{\alpha\in A}|\Upsilon^\alpha|_{0,D_\delta^\lambda}$ is bounded from below by a positive constant due to Assumption \ref{nondeg}, so conditions (\ref{cond1}) and (\ref{cond2}) hold with $K_t^\alpha=0$.

The properties (1)-(5) are nothing but Lemma 3.3 in \cite{quasi-linear} because the constant $N$ there doesn't depend on $\alpha.$
\end{proof}

\begin{lemma} \label{3l2}
In $D_{\lambda^2}$, introduce
$$
\mathrm{B}_2(x,\xi)=\lambda^\frac{3}{4}|\xi|^2.
$$

For each $\alpha\in\mathfrak A$, we define the first and second quasiderivatives by (\ref{itoxi}) and (\ref{itoeta}), in which
\allowdisplaybreaks\begin{align*}
&r(x,y):=(\rho(x),y),\ \ r_t:=r(x_t,\xi_t),\ \ \hat{r}_t:=r(x_t,\eta_t),\\
&\pi(x,y):=\frac{M(x)}{2}\sigma^*(x)y,\ \ \pi_t:=\pi(x_t,\xi_t),\ \ \hat{\pi}_t:=\pi(x_t,\eta_t),\\
&P(x,y):=Q(x,y),\ \ P_t:=P(x_t,\xi_t),\ \ \hat{P}_t:=P(x_t,\eta_t).
\end{align*}
where $\rho(x)$, $M(x)$ and $Q(x,y)$ are defined in the statement of the main theorem and satisfy the inequality (\ref{ineq}), and again, we drop the superscript $\alpha$ or $\alpha_t$ without confusion. Then (\ref{res1a}), (\ref{res1b1}), (\ref{res1b2}), (\ref{res2a}), (\ref{res2b1}), (\ref{res2b1'}), (\ref{res2b2}) and (\ref{res2b3}) all hold for any constants $p\in (0,\infty)$, $p'\in[0,p)$, $T\in [1,\infty)$, $x\in D_\delta^\lambda$, $\xi,\zeta\in\Rd$ and stopping times 
$$\gamma^\alpha\le\tau^{\alpha,x}_{D_{\lambda^2}}, \ \gamma^\alpha(\epsilon)\le\tau^{\alpha,x}_{D_{\lambda^2}}\wedge\bar\tau^{\alpha,x+\lambda\xi}_{D_{\lambda^2}}, \ \gamma_2^\alpha(\epsilon)\le\tau_{D_{\lambda^2}}^{\alpha,x}\wedge\hat\tau_{D_{\lambda^2}}^{\alpha,x+\epsilon\xi+\frac{\epsilon^2}{2}\eta}(\epsilon),$$ 
$$\gamma_3^\alpha(\epsilon)\le\tau_{D_{\lambda^2}}^{\alpha,x}\wedge\hat\tau_{D_{\lambda^2}}^{\alpha,x+\epsilon\xi+\frac{\epsilon^2}{2}\eta}(\epsilon)\wedge\hat\tau_{D_{\lambda^2}}^{\alpha,x-\epsilon\xi+\frac{\epsilon^2}{2}\eta}(-\epsilon),$$ where $x(\epsilon)=x+\epsilon\xi+\frac{\epsilon^2}{2}\eta$.

Furthermore, for $x\in D_{\lambda^2}$, $\xi\in\Rd$ and $\eta=0$, we have
\begin{enumerate}
\item $e^{-\phi^{\alpha,x}_t}\mathrm{B}_2(x^{\alpha,x}_t,\xi^{\alpha,\xi}_t)$ and $\sqrt{e^{-\phi^{\alpha,x}_t}\mathrm{B}_2(x^{\alpha,x}_t,\xi^{\alpha,\xi}_t)}$ are local supermartingales on $[0,\tau_2)$, where $\tau_2=\tau_{D_{\epsilon^2}}^{\alpha,x}$.
\item $\displaystyle{\sup_{\alpha\in\mathfrak A}E^\alpha_{x,\xi}\int_0^{\tau_2} e^{-\phi_t}|\xi_t|^2 dt\le N\mathrm{B}_2(x,\xi)}$
\item $\displaystyle{\sup_{\alpha\in\mathfrak A}E^\alpha_{x,\xi}\sup_{t\le\tau_2}e^{-\phi_t}|\xi_t|^2\le N\mathrm{B}_2(x,\xi)}$
\item $\displaystyle{\sup_{\alpha\in\mathfrak A}E^\alpha_{x,0}e^{-\phi_{\tau_2}}|\eta_{\tau_2}|\le \sup_{\alpha\in\mathfrak A}E^\alpha_{x,0}\sup_{t\le\tau_2}e^{-\phi_t}|\eta_t|\le N\mathrm{B}_2(x,\xi)}$
\item $\displaystyle{\sup_{\alpha\in\mathfrak A}E^\alpha_{x,0}\Big(\int_0^{\tau_2} e^{-2\phi_t}|\eta_t|^2 dt\Big)^\frac{1}{2}\le N\mathrm{B}_2(x,\xi)}$
\item The above inequalities are still all true if we replace $\phi^{\alpha,x}_t$ by $\phi^{\alpha,x}_t-\frac{1}{2}t$. More precisely, we have
$$\sup_{\alpha\in\mathfrak A}E^\alpha_{x,\xi}\int_0^{\tau_2} e^{-\phi_t+\frac{1}{2}t}|\xi_t|^2 dt\le N\mathrm{B}_2(x,\xi),\ \sup_{\alpha\in\mathfrak A}E^\alpha_{\xi,0}\sup_{t\le\tau_2}e^{-\phi_t+\frac{1}{2}t}|\xi_t|^2\le N\mathrm{B}_2(x,\xi)$$
$$\sup_{\alpha\in\mathfrak A}E^\alpha_{x,0}\Big(\int_0^{\tau_2} e^{-2\phi_t+t}|\eta_t|^2 dt\Big)^\frac{1}{2}\le N\mathrm{B}_2(x,\xi),\ \sup_{\alpha\in\mathfrak A}E^\alpha_{x,0}\sup_{t\le\tau_2}e^{-\phi_t+\frac{1}{2}t}|\eta_t|\le N\mathrm{B}_2(x,\xi)$$
\end{enumerate}
where $N$ is constant depending on $K_0$ and $\lambda$.
\end{lemma}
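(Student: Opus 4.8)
The plan is to read off the convergence statements (\ref{res1a})--(\ref{res2b3}) from Theorems \ref{thm1} and \ref{thm2}, and then to establish the supermartingale properties (1)--(6) from a single It\^o computation that turns inequality (\ref{ineq}) of Assumption \ref{inequality} into the sign of a drift. Throughout I drop the superscripts $\alpha,\alpha_t$ as in the earlier proofs, and I keep track of the fact that every constant produced depends only on $K_0$, $\lambda$ and the $\alpha$-uniform bounds on $\rho$, $M$, $Q$, so that all suprema over $\alpha\in\mathfrak A$ pass through harmlessly.

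First I would check that the chosen auxiliary processes fall under Theorems \ref{thm1} and \ref{thm2}. Since $\rho(x)$, $M(x)$ and $Q(x,y)$ are bounded on $A\times D_{\lambda^2}$ and $Q$ is linear in $y$, the coefficients $r_t=(\rho(x_t),\xi_t)$, $\pi_t=\tfrac12M(x_t)\sigma^*(x_t)\xi_t$, $P_t=Q(x_t,\xi_t)$ and their hatted analogues (obtained by replacing $\xi_t$ by $\eta_t$) satisfy (\ref{rpip1}) and (\ref{rpip2}), and obey (\ref{cond1}) and (\ref{cond2}) with a constant $K=K(K_0,\lambda)$ and $K_t^\alpha\equiv0$; hence (\ref{cond1a}) and (\ref{cond1b}) hold trivially, and Theorems \ref{thm1}--\ref{thm2} yield (\ref{res1a})--(\ref{res2b3}) for the parameters and stopping times listed in the statement, each of the latter being dominated by $\tau^{\alpha,x}_{D_{\lambda^2}}$ (or its perturbed versions). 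In particular $\sup_\alpha E^\alpha_\xi\sup_{t\le\tau_2}|\xi_t|^q<\infty$ and $\sup_\alpha E^\alpha_\eta\sup_{t\le\tau_2}|\eta_t|^q<\infty$ for every $q\in(0,\infty)$, where $\tau_2=\tau^{\alpha,x}_{D_{\lambda^2}}$; these bounds justify the integrability and localization in the steps below.

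The heart of the matter is property (1). By It\^o's formula along (\ref{itox}) and (\ref{itoxi}), the drift coefficient of $e^{-\phi_t}\mathrm{B}_2(x_t,\xi_t)=\lambda^{3/4}e^{-\phi_t}|\xi_t|^2$ is
$$
\lambda^{3/4}e^{-\phi_t}\Big[\big\|\sigma_{(\xi_t)}+r_t\sigma+\sigma P_t\big\|^2+2\big(\xi_t,b_{(\xi_t)}+2r_tb-\sigma\pi_t\big)-c_t|\xi_t|^2\Big].
$$
Inserting $r_t,\pi_t,P_t$, using $\sigma\pi_t=M(x_t)a(x_t)\xi_t$ (recall $a=\tfrac12\sigma\sigma^*$), and then invoking (\ref{ineq}) --- in the sharpened form with $c^\alpha-1$ from Remark \ref{lemma2}, applied with $y=\xi_t/|\xi_t|$ and multiplied through by $|\xi_t|^2$ --- the bracket is bounded by $-|\xi_t|^2-M(x_t)(a(x_t)\xi_t,\xi_t)$. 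Since $(a^\alpha y,y)\ge0$ and since (\ref{ineq}) stays valid upon replacing $M^\alpha$ by $\max(M^\alpha,0)$, we may assume $M\ge0$, so the bracket is $\le-|\xi_t|^2\le0$; hence $e^{-\phi_t}\mathrm{B}_2(x_t,\xi_t)$ is a local supermartingale, and in fact $d\big(e^{-\phi_t}\mathrm{B}_2(x_t,\xi_t)\big)\le-\lambda^{3/4}e^{-\phi_t}|\xi_t|^2\,dt+dm_t$ with $m_t$ a local martingale started at zero. For the square root, $s\mapsto\sqrt s$ being concave and increasing on $[0,\infty)$, $\sqrt{e^{-\phi_t}\mathrm{B}_2(x_t,\xi_t)}$ is again a local supermartingale (the concavity correction is a subtracted increasing process), its integrability being supplied by the moment bounds above. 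Property (6) is the identical computation with $\phi_t$ replaced by $\phi_t-\tfrac12 t$: the coefficient of $|\xi_t|^2$ in the bracket becomes $(c_t-1)+(\tfrac12-c_t)=-\tfrac12$, so the bracket is $\le-\tfrac12|\xi_t|^2\le0$ and the same conclusions follow with $\lambda$-dependent constants.

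Finally, (2)--(5) follow from property (1) and the sharpened drift bound by routine stochastic calculus. For (2), optional stopping of $e^{-\phi_t}\mathrm{B}_2(x_t,\xi_t)$ at $\tau_2\wedge T\wedge\sigma_n$, with $\sigma_n\uparrow\infty$ reducing $m_t$, gives $\lambda^{3/4}E^\alpha_{x,\xi}\int_0^{\tau_2\wedge T\wedge\sigma_n}e^{-\phi_t}|\xi_t|^2\,dt\le\mathrm{B}_2(x,\xi)$; letting $n,T\to\infty$ --- legitimate because $E^\alpha_x\tau_2\le\psi(x)-\lambda^2\le K_0$ uniformly in $\alpha$, $\psi-\lambda^2$ being a barrier for $D_{\lambda^2}$ --- yields (2) with $N=\lambda^{-3/4}$. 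For (3), from $e^{-\phi_t}\mathrm{B}_2(x_t,\xi_t)\le\mathrm{B}_2(x,\xi)+m_t$, the Burkholder--Davis--Gundy inequality, the bound $\langle m\rangle_t\le N\int_0^te^{-2\phi_s}|\xi_s|^4\,ds$ (from (\ref{cond1}) and (\ref{tiao})), and Cauchy--Schwarz, one gets $E\sup_{t\le\tau_2}e^{-\phi_t}|\xi_t|^2\le N\mathrm{B}_2(x,\xi)+N\big(E\sup_{t\le\tau_2}e^{-\phi_t}|\xi_t|^2\big)^{1/2}\big(E\int_0^{\tau_2}e^{-\phi_t}|\xi_t|^2\,dt\big)^{1/2}$, which absorbs via (2) (first on $\tau_2\wedge T\wedge\sigma_n$, then passing to the limit). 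Properties (4)--(5) for $\eta_t$ go the same way: (\ref{itoeta}) with the chosen coefficients has drift and diffusion bounded by $N(|\eta_t|+|\xi_t|^2)$ by (\ref{cond2}), so applying It\^o to $e^{-\phi_t}|\eta_t|$ and $e^{-2\phi_t}|\eta_t|^2$ and feeding in (2)--(3) together with the supermartingale $e^{-\phi_t}\mathrm{B}_2$ (which dominates $e^{-\phi_t}|\xi_t|^2$ with room to spare) closes the estimates; this is exactly Lemma 3.1 of \cite{quasi-linear}, whose constant $N$ depends only on $K_0$ and $\lambda$. The only genuinely delicate step is the drift computation in property (1) --- arranging the terms so that the combination produced by (\ref{itoxi}) is precisely the left-hand side of (\ref{ineq}); once that identification is made, everything else is bookkeeping and standard martingale estimates.
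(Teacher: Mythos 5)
Your proposal is correct and follows essentially the same route as the paper: the paper's proof consists of observing that $\rho$, $M$, $Q$ are bounded on $A\times D_{\lambda^2}$ so that (\ref{cond1}) and (\ref{cond2}) hold with $K_t^\alpha\equiv 0$ (whence Theorems \ref{thm1} and \ref{thm2} apply), and then citing the corresponding lemma of \cite{quasi-linear} for properties (1)--(6), whose constant is independent of $\alpha$. Your additional It\^o drift computation identifying the bracket with the left-hand side of (\ref{ineq}) is exactly the content of that cited lemma, so you have merely unpacked the citation rather than taken a different path.
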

\begin{proof}
The same as the proof of Lemma \ref{3l1}.
\end{proof}

We split the proof of Theorem \ref{4c} into three parts. Note that in the proof, for simplicity of notation, we may drop the superscripts such as $\alpha$ when it will cause no confusion.

\begin{proof}[Proof of (\ref{4d})] 
First, we fix an $x\in D_\delta^\lambda$ and a $\xi\in\Rd\setminus\{{0}\}$. Choose $ \epsilon_0>0$ sufficiently small, so that $B(x,\epsilon_0|\xi|):=\{y:|y-x|\le \epsilon_0|\xi|\}\subset D_\delta^\lambda$. For any $ \epsilon\in(0, \epsilon_0)$, by Bellman's principle (Theorem 1.1 in \cite{MR2303952}, in which $Q$ is defined by $D\times [-1,T+1]$, where $T$ is an arbitrary positive constant), we have, 
\allowdisplaybreaks\begin{align*}
\frac{v(x+ \epsilon\xi)-v(x)}{ \epsilon}=&\frac{1}{ \epsilon}\bigg\{\sup_{\alpha\in \mathfrak{A}}E^\alpha_{x+ \epsilon\xi}\Big[v(x_{\gamma})e^{-\phi_{\gamma}}+\int_0^{\gamma}f^{\alpha_s}(x_s)e^{-\phi_s}ds\Big]\\
&-\sup_{\alpha\in \mathfrak{A}}E^\alpha_x\Big[v(x_{\gamma})e^{-\phi_{\gamma}}+\int_0^{\gamma}f^{\alpha_s}(x_s)e^{-\phi_s}ds\Big]\bigg\},
\end{align*}
where the stopping time $\gamma^\alpha\le\tau_{D_\delta^\lambda}^{\alpha,x+ \epsilon\xi}\wedge\tau_{D_\delta^\lambda}^{\alpha,x}\wedge T$.

By Theorem 2.1 in \cite{MR631436} and Lemmas 2.1 and 2.2 in \cite{MR637615}, 
\begin{equation*}
\begin{gathered}
\sup_{\alpha\in \mathfrak{A}}E^\alpha_{x+ \epsilon\xi}\Big[v(x_{\gamma})e^{-\phi_{\gamma }}+\int_0^{\gamma }f^{\alpha_s}(x_s)e^{-\phi_s}ds\Big]\\
=\sup_{\alpha\in\mathfrak{A}}E^\alpha_{x+\epsilon\xi}\Big[v(y_{\gamma}( \epsilon))p_{\gamma}( \epsilon) e^{-{\phi}_{\gamma}( \epsilon)}+\int_0^{\gamma}(1+2\epsilon r_s)f^{\alpha_s}(y_s( \epsilon))p_s( \epsilon)e^{-\phi_s( \epsilon)}ds\Big],
\end{gathered}
\end{equation*}
in which $y_t^{\alpha,y}( \epsilon)$ is the solution to the It\^o equation (\ref{itoy}),
$$
\phi_t^{\alpha,y}( \epsilon):=\int_0^t(1+2 \epsilon r_s^{\alpha})c^{\alpha_s}(y_s^{\alpha,y}( \epsilon))ds,
$$
\begin{equation}\label{pt}
p_t^\alpha( \epsilon):=\exp\bigg(\int_0^t \epsilon\pi^{\alpha}_sdw_s-\frac{1}{2}\int_0^t| \epsilon\pi^{\alpha}_s|^2ds\bigg).
\end{equation}
with $\alpha\in\mathfrak A$, $r_s^{\alpha}, \pi_s^{\alpha}, P_s^{\alpha}$ defined in Lemma \ref{3l1}, and $\gamma^\alpha\le\bar\tau_{D_\delta^\lambda}^{\alpha,x+ \epsilon\xi}\wedge\tau_{D_\delta^\lambda}^{\alpha,x}\wedge T$.

Let
\begin{align*}
&q_t^\alpha(\epsilon)=\int_0^{t}(1+2\epsilon r^\alpha_s)f^{\alpha_s}(y_s( \epsilon))p_s( \epsilon)e^{-\phi_s( \epsilon)}ds,\\
&\bar y_t^{\alpha,y}(\epsilon)=(y_t^{\alpha,y}(\epsilon), -\phi_t^\alpha(\epsilon),p_t^\alpha(\epsilon),q_t^\alpha(\epsilon)),\\
&\bar x_t^{\alpha,x}=(x_t^{\alpha,x},-\phi_t^\alpha(0),p_t^\alpha(0),q_t^\alpha(0)).
\end{align*}

For any $\bar x=(x,x^{d+1},x^{d+2},x^{d+3})\in D\times\mathbb{R}^-\times\mathbb R^+\times\mathbb R$, introduce
\begin{equation}\label{V}
V(\bar x)=v(x)\exp(x^{d+1})x^{d+2}+x^{d+3}.
\end{equation}
Then we have
$$\frac{v(x+ \epsilon\xi)-v(x)}{ \epsilon}=\frac{1}{ \epsilon}\Big(\sup_{\alpha\in\mathfrak{A}}E^\alpha_{x+\epsilon\xi}V(\bar y_\gamma(\epsilon))-\sup_{\alpha\in\mathfrak{A}}E^\alpha_xV(\bar x_\gamma)\Big),$$
in which we let
$$\gamma=\gamma^\alpha(\epsilon,n,T)=\bar\tau_{D_\delta^\lambda}^{\alpha,x+ \epsilon\xi}\wedge\tau_{D_\delta^\lambda}^{\alpha,x}\wedge\kappa_n^\alpha\wedge T,$$
where
$$\kappa_n^\alpha=\inf\{t\ge0:|\xi_t^{\alpha,\xi}|\ge n\}.$$

Since the difference of two supremums is less than the supremum of the differences, and the supremum of a sum is less than the sum of the supremums, we have
\begin{align*}
\frac{v(x+ \epsilon\xi)-v(x)}{ \epsilon}\le&\sup_{\alpha\in\mathfrak{A}}E\frac{V(\bar y^{\alpha,x+\epsilon\xi}_{\gamma^\alpha}(\epsilon))-V(\bar x^{\alpha,x}_{\gamma^\alpha})}{ \epsilon}\\
\le&\sup_{\alpha\in\mathfrak{A}}E\frac{V(\bar y^{\alpha,x+\epsilon\xi}_{\gamma^\alpha}(\epsilon))-V(\bar x^{\alpha,x}_{\gamma^\alpha})}{ \epsilon}-V_{(\bar\xi_{\gamma^\alpha}^{\alpha,\xi})}(\bar x_{\gamma^\alpha}^{\alpha,x})+\sup_{\alpha\in\mathfrak{A}}EV_{(\bar\xi_{\gamma^\alpha}^{\alpha,\xi})}(\bar x_{\gamma^\alpha}^{\alpha,x})\\
:=&I_1(\epsilon,n,T)+I_2(\epsilon,n,T),
\end{align*}
where 
\begin{equation}\label{xibar}
\bar\xi_t^{\alpha,\xi}=(\xi_t^{\alpha,\xi},\xi_t^{d+1,\alpha},\xi_t^{d+2,\alpha},\xi_t^{d+3,\alpha}),
\end{equation}
with
\begin{align*}
&\xi_t^{d+1,\alpha}:=-\int_0^t\Big[c^{\alpha_s}_{(\xi^{\alpha,\xi}_s)}(x^{\alpha,x}_s)+2r^\alpha_sc^{\alpha_s}(x^{\alpha,x}_s)\Big] ds,\\
&\xi_t^{d+2,\alpha}:=\xi^{0,\alpha}_t=\int_0^t\pi^\alpha_sdw_s,\\
&\xi_t^{d+3,\alpha}:=\int_0^te^{-\phi_s^{\alpha,x}}\Big[f^{\alpha_s}_{(\xi_s^{\alpha,\xi})}(x_s^{\alpha,x})+\big(2r^\alpha_s+\xi_s^{d+1,\alpha}+\xi_s^{d+2,\alpha}\big)f^{\alpha_s}(x_s^{\alpha,x})\Big]ds.
\end{align*}

We claim that 
\begin{equation}\label{I1}
\lim_{\epsilon\downarrow0}I_1(\epsilon,n,T)=0.
\end{equation}
To show it, bearing in mind that for any $h^\alpha(x)\in C^1(\bar D_\delta)$, whose derivatives are uniformly continuous in $\alpha$, 
we have, for any $x,y\in D_\delta$ and $\xi\in \Rd$, $r\in\Rd$ and $n\in\mathbb N$,
\begin{align}
&|\frac{(1+2\epsilon r)h^\alpha(y)-h^\alpha(x)}{\epsilon}-h^\alpha_{(\xi)}(x)-2rh^\alpha(x)|\label{h}\\
\nonumber=&|h^\alpha_{(\frac{y-x}{\epsilon})}(y^*)-h^\alpha_{(\xi)}(x)+2r(h^\alpha(y)-h^\alpha(x))|\\
\nonumber\le&|(h^\alpha_x(y^*)-h^\alpha_x(x),\frac{y-x}{\epsilon})|+|(h^\alpha_x(x),\frac{y-x}{\epsilon}-\xi)|+2K_0(\epsilon r^2+\frac{|y-x|^2}{\epsilon}),
\end{align}
where $y^*$ is a point on the line segment with ending points $x$ and $y$.

First, by Theorem \ref{thm1}, for any contants $p$ and $p'$ satisfying $0\le p'<p<\infty$, we have
\begin{equation}
\sup_{\alpha\in\mathfrak A}E^\alpha_\xi\sup_{t\le\gamma}|\xi_t|^p<\infty,
\end{equation}
\begin{equation}
\lim_{\epsilon\downarrow0}\sup_{\alpha\in\mathfrak A}E^\alpha\sup_{t\le\gamma}\frac{|y_t^{x+\epsilon\xi}(\epsilon)-x_t^{x}|^p}{\epsilon^{p'}}=0,
\end{equation}
\begin{equation}\label{d}
\lim_{\epsilon\downarrow0}\sup_{\alpha\in\mathfrak A}E^\alpha\sup_{t\le\gamma}|\frac{y_t^{x+\epsilon\xi}(\epsilon)-x_t^{x}}{\epsilon}-\xi_t^{\xi}|^p=0.
\end{equation}

Second, apply (\ref{h}) to $c^\alpha(x)$ we get
\begin{equation}\label{d+1}
\varlimsup_{\epsilon\downarrow0}\sup_{\alpha\in\mathfrak A}E^\alpha\sup_{t\le\gamma}|\frac{\phi_t(0)-\phi_t(\epsilon)}{\epsilon}-\xi_t^{d+1}|^p=0.
\end{equation}

Third, we notice that
$$\frac{p_t(\epsilon)-p_t(0)}{\epsilon}=\frac{p_t(\epsilon)-1}{\epsilon}=\int_0^tp_s(\epsilon)\pi_sdw_s.$$
Recall that $\gamma^\alpha\le \kappa^\alpha_n\wedge T$. It follows that
\begin{align*}
E^\alpha\sup_{t\le\gamma}|\frac{p_t(\epsilon)-1}{\epsilon}-\xi_t^{d+2}|^p\le&N(p)E^\alpha\Big(\int_0^\gamma(p_t(\epsilon)-1)^2|\pi_t|^2dt\Big)^{p/2}\\
\le&\epsilon^pN(p) E^\alpha\Big(\sup_{t\le\gamma}\Big|\frac{p_t(\epsilon)-1}{\epsilon}\Big|^{2p}+\int_0^\gamma|\pi_t|^{2p}dt\Big)\\
\le&\epsilon^p N(p)E^\alpha\Big(\int_0^\gamma p_t^{2p}(\epsilon)|\pi_t|^{2p}dt+\int_0^\gamma|\pi_t|^{2p}dt\Big).
\end{align*}
Hence
\begin{equation}\label{d+2}
\lim_{\epsilon\downarrow0}\sup_{\alpha\in\mathfrak A}E^\alpha\sup_{t\le\gamma}|\frac{p_t(\epsilon)-p_t(0)}{\epsilon}-\xi_t^{d+2}|^p=0.
\end{equation}

Fourth, bearing in mind that
\begin{align*}
&|\frac{f(\epsilon)g(\epsilon)-fg}{\epsilon}-f'g-fg'|\\
\le&|\frac{f(\epsilon)-f}{\epsilon}-f'||g(\epsilon)|+|\frac{g(\epsilon)-g}{\epsilon}-g'||f|+|f'||g(\epsilon)-g|\\
\le&|\frac{f(\epsilon)-f}{\epsilon}-f'||g(\epsilon)|+|\frac{g(\epsilon)-g}{\epsilon}-g'||f|+\epsilon(|f'|^2+\frac{|g(\epsilon)-g|^2}{\epsilon^2}).
\end{align*}
Therefore, to prove
\begin{equation}\label{d+3}
\lim_{\epsilon\downarrow0}\sup_{\alpha\in\mathfrak A}E^\alpha\sup_{t\le\gamma}|\frac{q_t(\epsilon)-q_t(0)}{\epsilon}-\xi_t^{d+3}|^p=0,
\end{equation}
it suffices to show that
$$\lim_{\epsilon\downarrow0}\sup_{\alpha\in\mathfrak A}E^\alpha\sup_{t\le\gamma}|\frac{(1+2\epsilon r_t)f^{\alpha_t}(y_t(\epsilon))-f^{\alpha_t}(x_t)}{\epsilon}-f^{\alpha_t}_{(\xi_t)}(x_t)-2r_tf^{\alpha_t}(x_t)|^p=0,$$
$$\lim_{\epsilon\downarrow0}\sup_{\alpha\in\mathfrak A}E^\alpha\sup_{t\le\gamma}|\frac{e^{-\phi_t(\epsilon)}-e^{-\phi_t(0)}}{\epsilon}+\xi_t^{d+1}e^{-\phi_t(0)}|^p=0.$$
The first equation is true due to (\ref{h}) with $h^\alpha=f^\alpha$. The second one is true by a similar argument.

Finally, observe that for any $\bar x=(x,x^{d+1},1,x^{d+3})$, $\bar y=(y,y^{d+1},y^{d+2},y^{d+3})$, $\bar \xi=(\xi,\xi^{d+1},\xi^{d+2},\xi^{d+3})\in D\times\mathbb R^-\times \mathbb R^+\times\mathbb R$, we have
\begin{align*}
\frac{V(\bar y)-V(\bar x)}{\epsilon}-V_{(\bar\xi)}(\bar x)=&\frac{v(y)e^{y^{d+1}}y^{d+2}-v(x)e^{x^{d+1}}}{\epsilon}+\frac{y^{d+3}-x^{d+3}}
{\epsilon}\\
&-e^{x^{d+1}}[v_{(\xi)}(x)+v(x)(\xi^{d+1}+\xi^{d+2})]-\xi^{d+3}.
\end{align*}
It is not hard to see (\ref{I1}) is true with (\ref{d}), (\ref{d+1}), (\ref{d+2}) and (\ref{d+3}) in hand.

To estimate $I_2(\epsilon, n,T)$, we notice that $V_{(\bar\xi^{\alpha,\xi}_t)}(\bar x^{\alpha,x}_t)$ is exactly $X^\alpha_t$ defined by (2.9) in \cite{quasi-linear}, in which $u$ is replaced by $v$. More precisely,
\begin{align*}
V_{(\bar\xi^{\alpha,\xi}_t)}(\bar x^{\alpha,x}_t)=X_t^{\alpha}:=&e^{-\phi_t^{\alpha,x}}\Big[v_{(\xi_t^{\alpha,\xi})}(x_t^{\alpha,x})+\tilde{\xi}_t^{0,\alpha}v(x_t^{\alpha,x})\Big]\\
&+\int_0^te^{-\phi_s^{\alpha,x}}\Big[f^{\alpha_s}_{(\xi_s^{\alpha,\xi})}(x_s^{\alpha,x})+\big(2r^\alpha_s+\tilde{\xi}_s^{0,\alpha}\big)f^{\alpha_s}(x_s^{\alpha,x})\Big]ds,
\end{align*}
where
$$\tilde\xi^{0,\alpha}_t=\xi^{0,\alpha}_t+\xi_t^{d+1,\alpha}.$$

It follows that
$$I_2(\epsilon,n, T)=\sup_{\alpha\in\mathfrak A}E^\alpha X_\gamma\le \sup_{\alpha\in\mathfrak A}E e^{-\phi_{\gamma^\alpha}^{\alpha,x}}v_{(\xi_{\gamma^\alpha}^{\alpha,\xi})}(x_{\gamma^\alpha}^{\alpha,x})+\sup_{\alpha\in\mathfrak A}E\Big(X^\alpha_{\gamma^\alpha}-e^{-\phi_{\gamma^\alpha}^{\alpha,x}}v_{(\xi_{\gamma^\alpha}^{\alpha,\xi})}(x_{\gamma^\alpha}^{\alpha,x})\Big).$$

We first notice that as in the proof of (3.4) in \cite{quasi-linear}, for each $\alpha$,
\begin{align*}
&E\sup_{t\le\tau^{\alpha,x}_{D_\delta^\lambda}}\Big(X^\alpha_t-e^{-\phi_{t}^{\alpha,x}}v_{(\xi_{t}^{\alpha,\xi})}(x_{t}^{\alpha,x})\Big)\\
=&E\sup_{t\le\tau^{\alpha,x}_{D_\delta^\lambda}}\bigg\{e^{-\phi_t^{\alpha,x}}\tilde{\xi}_t^{0,\alpha}v(x_t^{\alpha,x})+\int_0^te^{-\phi_s^{\alpha,x}}\Big[f^{\alpha_s}_{(\xi_s^{\alpha,\xi})}(x_s^{\alpha,x})+\big(2r^\alpha_s+\tilde{\xi}_s^{0,\alpha}\big)f^{\alpha_s}(x_s^{\alpha,x})\Big]ds\bigg\}\\
\le&(|g|_{0,D}+|f^\alpha|_{0,D})\Big(E\sup_{t\le\tau^{\alpha,x}_{D_\delta^\lambda}}|\xi_t^{0,\alpha}|+E\sup_{t\le\tau^{\alpha,x}_{D_\delta^\lambda}}|\xi_t^{d+1,\alpha}|\Big)\\
&+|f^\alpha|_{1,D}\Big(E\int_0^{\tau^{\alpha,x}_{D_\delta^\lambda}}|\xi_s^{\alpha,\xi}|+2r_s^\alpha ds+E\sup_{t\le\tau^{\alpha,x}_{D_\delta^\lambda}}|\xi_t^{0,\alpha}|+E\sup_{t\le\tau^{\alpha,x}_{D_\delta^\lambda}}|\xi_t^{d+1,\alpha}|\Big)
\end{align*}
Repeat the estimates (3.19)-(3.21) in \cite{quasi-linear}, we have
$$E\sup_{t\le\tau^{\alpha,x}_{D_\delta^\lambda}}\Big(X^\alpha_t-e^{-\phi_{t}^{\alpha,x}}v_{(\xi_{t}^{\alpha,\xi})}(x_{t}^{\alpha,x})\Big)\le N\sqrt{\mathrm B_1(x,\xi)}$$
where $N$ is independent of $\alpha$. So
$$I_2(\epsilon,n, T)\le \sup_{\alpha\in\mathfrak A}E e^{-\phi_{\gamma^\alpha}^{\alpha,x}}v_{(\xi_{\gamma^\alpha}^{\alpha,\xi})}(x_{\gamma^\alpha}^{\alpha,x})+N\sqrt{\mathrm B_1(x,\xi)}.$$

We next notice that
\begin{align*}
\sup_{\alpha\in\mathfrak{A}}Ev_{(\xi_{\gamma^\alpha}^{\alpha,\xi})}(x_{\gamma^\alpha}^{\alpha,x})=&\sup_{\alpha\in\mathfrak{A}}E\frac{v_{(\xi_{\gamma^\alpha}^{\alpha,\xi})}(x_{\gamma^\alpha}^{\alpha,x})}{\sqrt{\mathrm{B}_1(x_{\gamma^\alpha}^{\alpha,x},\xi_{\gamma^\alpha}^{\alpha,\xi})}}\cdot\sqrt{\mathrm{B}_1(x_{\gamma^\alpha}^{\alpha,x},\xi_{\gamma^\alpha}^{\alpha,\xi})}\\
\le&\sup_{\alpha\in\mathfrak{A}}E\bigg(\frac{v_{(\xi_{\gamma^\alpha}^{\alpha,\xi})}(x_{\gamma^\alpha}^{\alpha,x})}{\sqrt{\mathrm{B}_1(x_{\gamma^\alpha}^{\alpha,x},\xi_{\gamma^\alpha}^{\alpha,\xi})}}-\frac{v_{(\xi_{\gamma^\alpha}^{\alpha,\xi})}(x_{\tau^{\alpha,x}_{D_\delta^\lambda}}^{\alpha,x})}{\sqrt{\mathrm{B}_1(x_{\tau^{\alpha,x}_{D_\delta^\lambda}}^{\alpha,x},\xi_{\gamma^\alpha}^{\alpha,\xi})}}\bigg)\cdot\sqrt{\mathrm{B}_1(x_{\gamma^\alpha}^{\alpha,x},\xi_{\gamma^\alpha}^{\alpha,\xi})}\\
&+\sup_{\alpha\in\mathfrak{A}}E\frac{v_{(\xi_{\gamma^\alpha}^{\alpha,\xi})}(x_{\tau^{\alpha,x}_{D_\delta^\lambda}}^{\alpha,x})}{\sqrt{\mathrm{B}_1(x_{\tau^{\alpha,x}_{D_\delta^\lambda}}^{\alpha,x},\xi_{\gamma^\alpha}^{\alpha,\xi})}}\cdot\sqrt{\mathrm{B}_1(x_{\gamma^\alpha}^{\alpha,x},\xi_{\gamma^\alpha}^{\alpha,\xi})}\\
:=&J_1(\epsilon,n,T)+J_2(\epsilon,n,T).
\end{align*}
Notice that
$$\frac{v_{(\xi)}(x)}{\sqrt{\mathrm{B}_1(x,\xi)}}=\frac{v_{(\xi/|\xi|)}(x)}{\sqrt{\mathrm{B}_1(x,\xi/|\xi|)}}$$
is a continuous function from $D_\delta^\lambda\times S_1$ to $\mathbb{R}$, where $S_1$ is the unit sphere in $\Rd$. By Weierstrass Approximation Theorem,  there exists a polynomial $W(x,\xi): D_\delta^\lambda\times S_1\rightarrow\mathbb{R}$, such that 
$$\sup_{x\in D_\delta^\lambda,\xi\in S_1}|\frac{v_{(\xi)}(x)}{\sqrt{\mathrm{B}_1(x,\xi)}}-W(x,\xi)|\le1.$$
It follows that
\begin{align*}
J_1(\epsilon,n,T)\le&\sup_{\alpha\in\mathfrak{A}}E|W(x_{\gamma^\alpha}^{\alpha,x},\xi_{\gamma^\alpha}^{\alpha,\xi})-W(x_{\tau^{\alpha,x}_{D_\delta^\lambda}}^{\alpha,x},\xi_{\gamma^\alpha}^{\alpha,\xi})|\sqrt{\mathrm{B}_1(x_{\gamma^\alpha}^{\alpha,x},\xi_{\gamma^\alpha}^{\alpha,\xi})}\\
&+2\sup_{\alpha\in\mathfrak{A}}E\sqrt{\mathrm{B}_1(x_{\gamma^\alpha}^{\alpha,x},\xi_{\gamma^\alpha}^{\alpha,\xi})}\\
\le&N\sup_{\alpha\in\mathfrak{A}}E|x_{\gamma^\alpha}^{\alpha,x}-x_{\tau^{\alpha,x}_{D_\delta^\lambda}}^{\alpha,x}|\sqrt{\mathrm{B}_1(x_{\gamma^\alpha}^{\alpha,x},\xi_{\gamma^\alpha}^{\alpha,\xi})}+2\sqrt{\mathrm{B}_1(x,\xi)}\\
\le&N\sqrt{\mathrm{B}_1(x,\xi)}\sup_{\alpha\in\mathfrak{A}}E|x_{\gamma^\alpha}^{\alpha,x}-x_{\tau^{\alpha,x}_{D_\delta^\lambda}}^{\alpha,x}|^2+\frac{\sup_{\alpha\in\mathfrak{A}}E\mathrm{B}_1(x_{\gamma^\alpha}^{\alpha,x},\xi_{\gamma^\alpha}^{\alpha,\xi})}{\sqrt{\mathrm{B}_1(x,\xi)}}\\
&+2\sqrt{\mathrm{B}_1(x,\xi)}\\
\le&N\sqrt{\mathrm{B}_1(x,\xi)}E|x_{\gamma^\alpha}^{\alpha,x}-x_{\tau^{\alpha,x}_{D_\delta^\lambda}}^{\alpha,x}|^2+3\sqrt{\mathrm{B}_1(x,\xi)}\\
\le&N\sqrt{\mathrm{B}_1(x,\xi)}\Big(E|\tau^{\alpha,x}_{D_\delta^\lambda}-\tau^{\alpha,x}_{D_\delta^\lambda}\wedge\tau^{\alpha,x+\epsilon\xi}_{D_\delta^\lambda}|+E|{\tau^{\alpha,x}_{D_\delta^\lambda}}-{\tau^{\alpha,x}_{D_\delta^\lambda}\wedge T}|\\
&+|{\tau^{\alpha,x}_{D_\delta^\lambda}}-{\tau^{\alpha,x}_{D_\delta^\lambda}}\wedge\kappa_n^\alpha|\Big)+3\sqrt{\mathrm{B}_1(x,\xi)}.
\end{align*}
Thus
$$\varlimsup_{T\uparrow\infty}\varlimsup_{n\uparrow\infty}\varlimsup_{\epsilon\downarrow0}J_1(\epsilon,n,T)\le3\sqrt{\mathrm{B}_1(x,\xi)}.$$
Also, notice that
\begin{align*}
J_2(\epsilon,n,T)\le&\sup_{y\in \partial D_\delta^\lambda,\zeta\in \Rd\setminus\{0\}}\frac{v_{(\zeta)}(y)}{\sqrt{\mathrm{B}_1(y,\zeta)}}\sup_{\alpha\in\mathfrak{A}}E\sqrt{\mathrm{B}_1(x_{\gamma^\alpha}^{\alpha,x},\xi_{\gamma^\alpha}^{\alpha,\xi})}\\
\le&\sup_{y\in \partial D_\delta^\lambda,\zeta\in  \Rd\setminus\{0\}}\frac{v_{(\zeta)}(y)}{\sqrt{\mathrm{B}_1(y,\zeta)}}\cdot\sqrt{\mathrm{B}_1(x,\xi)}.
\end{align*}
Hence,
$$\varlimsup_{T\uparrow\infty}\varlimsup_{n\uparrow\infty}\varlimsup_{\epsilon\downarrow0}I_2(\epsilon,n,T)\le\sup_{y\in \partial D_\delta^\lambda,\zeta\in  \Rd\setminus\{0\}}\frac{v_{(\zeta)}(y)}{\sqrt{\mathrm{B}_1(y,\zeta)}}\cdot\sqrt{\mathrm{B}_1(x,\xi)}+N\sqrt{\mathrm{B}_1(x,\xi)}.$$
We conclude that
$$\frac{v_{(\xi)}(x)}{\sqrt{\mathrm{B}_1(x,\xi)}}\le\sup_{y\in \partial D_\delta^\lambda,\zeta\in  \Rd\setminus\{0\}}\frac{v_{(\zeta)}(y)}{\sqrt{\mathrm{B}_1(y,\zeta)}}+N,\ \forall x\in D_\delta^\lambda,\xi\in\Rd\setminus\{0\}.$$
Notice that $\mathrm B_1(x,\xi)=\mathrm B_1(x,-\xi)$. Replacing $\xi$ by $-\xi$, we have
$$\frac{-v_{(\xi)}(x)}{\sqrt{\mathrm{B}_1(x,\xi)}}\le\sup_{y\in \partial D_\delta^\lambda,\zeta\in  \Rd\setminus\{0\}}\frac{v_{(\zeta)}(y)}{\sqrt{\mathrm{B}_1(y,\zeta)}}+N,\ \forall x\in D_\delta^\lambda,\xi\in\Rd\setminus\{0\},$$
which implies that 
\begin{equation}\label{ind1}
\frac{|v_{(\xi)}(x)|}{\sqrt{\mathrm{B}_1(x,\xi)}}\le\sup_{y\in \partial D_\delta^\lambda,\zeta\in  \Rd\setminus\{0\}}\frac{|v_{(\zeta)}(y)|}{\sqrt{\mathrm{B}_1(y,\zeta)}}+N,\ \forall x\in D_\delta^\lambda,\xi\in\Rd\setminus\{0\}.
\end{equation}
Repeating the argument above in $D_{\lambda^2}$, we have
\begin{equation}\label{ind2}
\frac{|v_{(\xi)}(x)|}{\sqrt{\mathrm{B}_2(x,\xi)}}\le\sup_{y\in \partial D_{\lambda^2},\zeta\in  \Rd\setminus\{0\}}\frac{|v_{(\zeta)}(y)|}{\sqrt{\mathrm{B}_2(y,\zeta)}}+N,\ \forall x\in D_{\lambda^2},\xi\in\Rd\setminus\{0\}.
\end{equation}
The inequalities (\ref{ind1}) and (\ref{ind2}) are the same as (3.22) and (3.24) in \cite{quasi-linear}. So by repeating the argument after (3.24) in \cite{quasi-linear}, we get
$$v_{(\xi)}(x)\le N\bigg(|\xi|+\frac{|\psi_{(\xi)}(x)|}{\psi^{\frac{1}{2}}(x)}\bigg), \ \ \mbox{ a.e. in }D.$$
(\ref{4d}) is proved.
\end{proof}

\begin{proof}[Proof of (\ref{4dd})]

The idea is the same as the first order case. Fix $x\in D_\delta^\lambda$, $\xi\in\Rd\setminus\{{0}\}$ and sufficiently small positive $ \epsilon_0$, so that $B(x,\epsilon_0|\xi|)\subset D_\delta^\lambda$. For each $\alpha\in\mathfrak A$, let $\gamma^\alpha:=\hat\tau_{D_\delta^\lambda}^\alpha(x+ \epsilon\xi)\wedge\tau_{D_\delta^\lambda}^\alpha(x)\wedge\hat\tau_{D_\delta^\lambda}^\alpha(x- \epsilon\xi)\wedge\kappa_n^\alpha\wedge T$, where $T\in [1,\infty)$. We have
\allowdisplaybreaks\begin{align*}
&-\frac{v(x+ \epsilon\xi)-2v(x)+v(x- \epsilon\xi)}{ \epsilon^2}\\
=&\frac{1}{ \epsilon^2}\bigg\{-\sup_{\alpha\in \mathfrak{A}}E^\alpha_{x+ \epsilon\xi}\Big[v(z_{\gamma}(\epsilon))\hat p_{\gamma}(\epsilon) e^{-\hat\phi_{\gamma}(\epsilon)}+\int_0^{\gamma}(1+2\epsilon r_s+\epsilon^2\hat r_s)f^{\alpha_s}(z_s(\epsilon))\hat p_s(\epsilon)e^{-\hat\phi_s(\epsilon)}ds\Big]\\
&\qquad +2\sup_{\alpha\in \mathfrak{A}}E^\alpha_{x}\Big[v(x_{\gamma })\hat p_{\gamma } e^{-\hat\phi_{{\gamma }}}+\int_0^{{\gamma }}f^{\alpha_s}(x_s)\hat p_se^{-\hat\phi_s}ds\Big]\\
&\qquad -\sup_{\alpha\in \mathfrak{A}}E^\alpha_{x- \epsilon\xi}\Big[v(z_{\gamma}(-\epsilon))\hat p_{\gamma}(-\epsilon) e^{-\hat\phi_{\gamma}(-\epsilon)}\\
&\qquad+\int_0^{{\gamma}}(1-2\epsilon r_s+\epsilon^2\hat r_s)f^{\alpha_s}(z_s(-\epsilon))\hat p_s(-\epsilon)e^{-\hat\phi_s(-\epsilon)}ds\Big]\bigg\},
\end{align*}
in which $z_t^{\alpha,z}(\epsilon)$ is the solution to the It\^o equation (\ref{itoz}),
$$
\hat\phi_t^{\alpha,z}( \epsilon):=\int_0^t(1+2 \epsilon r_s^{\alpha}+\epsilon^2 \hat r_s^\alpha)c^{\alpha_s}(z_s^{\alpha,z}( \epsilon))ds,
$$
and
$$
\hat p_t^\alpha( \epsilon):=\exp\bigg(\int_0^t (\epsilon\pi^{\alpha}_s+\frac{\epsilon^2}{2}\hat\pi^\alpha_s)dw_s-\frac{1}{2}\int_0^t| \epsilon\pi^{\alpha}_s+\frac{\epsilon}{2}\hat \pi^\alpha_s|^2ds\bigg).
$$
with $\alpha\in\mathfrak A$, $r_s^{\alpha}, \pi_s^{\alpha}, P_s^{\alpha}, \hat r_s^{\alpha}, \hat\pi_s^{\alpha}, \hat P_s^{\alpha}$ defined in Lemma \ref{3l1}.

By intruducing
\begin{align*}
&\hat q_t^\alpha(\epsilon)=\int_0^{t}(1+2\epsilon r^\alpha_s+\epsilon^2\hat r^\alpha_s)f^{\alpha_s}(z_s( \epsilon))\hat p_s( \epsilon)e^{-\hat\phi_s( \epsilon)}ds,\\
&\bar z_t^{\alpha,z}(\epsilon)=(z_t^{\alpha,z}(\epsilon),-\hat\phi_t^\alpha(\epsilon),\hat p_t^\alpha(\epsilon),\hat q_t^\alpha(\epsilon)),\\
&\bar x_t^{\alpha,x}=(x_t^{\alpha,x},-\hat\phi_t^\alpha(0),\hat p_t^\alpha(0),\hat q_t^\alpha(0)),
\end{align*}
we get
\begin{align*}
&-\frac{v(x+ \epsilon\xi)-2v(x)+v(x- \epsilon\xi)}{ \epsilon^2}\\
=&\frac{1}{\epsilon^2}\Big(-\sup_{\alpha\in\mathfrak A}E^\alpha_{x+\epsilon\xi}V(\bar z_\gamma(\epsilon)+2\sup_{\alpha\in\mathfrak A}E^\alpha_xV(\bar x_\gamma)-\sup_{\alpha\in\mathfrak A}E^\alpha_{x-\epsilon\xi}V(\bar z_\gamma(-\epsilon)\Big)\\
\le&\sup_{\alpha\in\mathfrak A}\frac{-E^\alpha_{x+\epsilon\xi}V(\bar z_\gamma(\epsilon)+2E^\alpha_x V(\bar x_\gamma)- E^\alpha_{x-\epsilon\xi}V(\bar z_\gamma(-\epsilon)}{\epsilon^2}\\
=&\sup_{\alpha\in\mathfrak A}E\frac{-V(\bar z^{\alpha,x+\epsilon\xi}_{\gamma^\alpha}(\epsilon)+2 V(\bar x^{\alpha,x}_{\gamma^\alpha})- V(\bar z^{\alpha,x-\epsilon\xi}_{\gamma^\alpha}(-\epsilon))}{\epsilon^2}\\
\le&\sup_{\alpha\in\mathfrak A}E\bigg[\frac{-V(\bar z^{\alpha,x+\epsilon\xi}_{\gamma^\alpha}(\epsilon)+2 V(\bar x^{\alpha,x}_{\gamma^\alpha})- V(\bar z^{\alpha,x-\epsilon\xi}_{\gamma^\alpha}(-\epsilon))}{\epsilon^2}+V_{(\bar \eta^{\alpha,0}_{\gamma^\alpha})}(\bar x^{\alpha,x}_{\gamma^\alpha})+V_{(\bar \xi^{\alpha,\xi}_{\gamma^\alpha})(\bar \xi^{\alpha,\xi}_{\gamma^\alpha})}(\bar x^{\alpha,x}_{\gamma^\alpha})\bigg]\\
&+\sup_{\alpha\in\mathfrak A}E\Big[-V_{(\bar \eta^{\alpha,0}_{\gamma^\alpha})}(\bar x^{\alpha,x}_{\gamma^\alpha})-V_{(\bar \xi^{\alpha,\xi}_{\gamma^\alpha})(\bar \xi^{\alpha,\xi}_{\gamma^\alpha})}(\bar x^{\alpha,x}_{\gamma^\alpha})\Big]\\
:=&G_1(\epsilon, n,T)+G_2(\epsilon,n,T),
\end{align*}
where $V$ and $\bar\xi^{\alpha,\xi}_t$ are defined by (\ref{V}) and (\ref{xibar}), respectively, and
$$\bar\eta^{\alpha,\eta}_t:=(\eta_t^{\alpha,\eta}, \eta^{d+1,\alpha}_t, \eta^{d+2,\alpha}_t, \eta^{d+3,\alpha}_t),$$
with
\begin{align*}
\eta^{d+1,\alpha}_t:=&-\int_0^tc_{(\xi_s^{\alpha,\xi})(\xi_s^{\alpha,\xi})}(x_s^{\alpha,x})+c_{(\eta_s^{\alpha,\eta})}(x_s^{\alpha,s})+4r^\alpha_sc_{(\xi_s^{\alpha,\xi})}(x_s^{\alpha,x})+2\hat r^\alpha_sc(x_s^{\alpha,x})ds,\\
\eta^{d+2,\alpha}_t:=&\eta^{0,\alpha}_t=\Big(\int_0^t\pi_s^\alpha dw_s\Big)^2-\int_0^t|\pi^\alpha_s|^2ds+\int_0^t\hat\pi_sdw_s,\\
\eta^{d+3,\alpha}_t:=&\int_0^te^{-\phi_s^{\alpha,x}}\Big[f^{\alpha_s}_{(\xi_s^{\alpha,\xi})(\xi_s^{\alpha,\xi})}(x_s^{\alpha,x})+f_{(\eta_s^{\alpha,\eta})}(x_s^{\alpha,x})+(2\xi_s^{d+1,\alpha}+4r^\alpha_s)f^{\alpha_s}_{(\xi_s^{\alpha,\xi})}(x_s^{\alpha,x})\\
&+\big((\xi_s^{d+1,\alpha})^2+\eta_s^{d+1,\alpha}+4r^\alpha_s\xi_s^{d+1,\alpha}+2\hat r^\alpha_s\big) f^{\alpha_s}(x_s^{\alpha,x})\Big]ds.
\end{align*}

We first claim that
$$\lim_{\epsilon\downarrow0}G_1(\epsilon, n,T)=0.$$
The proof is similar as that of (\ref{I1}) with the help of the following two second-order counterparts.

First, if $h^\alpha(x)\in C^2(\bar D_\delta)$, and the derivatives of $h^\alpha(x)$ are uniformly continuous in $\alpha$, then for any $x,z,z'\in D_\delta$, $\xi,\eta\in\Rd$, $r, \hat r\in \mathbb R$ and $n\in\mathbb N$, we have
\begin{align*}
&\frac{h^\alpha(z)-2h^\alpha(x)+h^\alpha(z')}{\epsilon^2}\\
=&\frac{1}{\epsilon^2}\Big[h^\alpha_{(z-x)}(x)+\frac{1}{2}h^\alpha_{(z-x)(z-x)}(z^*)+h^\alpha_{(z'-x)}(x)+\frac{1}{2}h^\alpha_{(z'-x)(z'-x)}({z^*}')\Big]\\
=&h^\alpha_{(\frac{z-2x+z'}{\epsilon^2})}(x)+\frac{1}{2}\Big[h^\alpha_{(\frac{z-x}{\epsilon})(\frac{z-x}{\epsilon})}(z^*)+h^\alpha_{(\frac{z'-x}{\epsilon})(\frac{z'-x}{\epsilon})}({z^*}')\Big],
\end{align*}
where $z^*$ and ${z^*}'$ are on the line segments $\overline{xz}$ and $\overline{xz'}$, respectively. Hence,
\begin{align*}
&|\frac{(1+2\epsilon r+\epsilon^2\hat r)h^\alpha(z)-2h^\alpha(x)+(1-2\epsilon r+\epsilon^2\hat r)h^\alpha(z')}{\epsilon^2}\\
&-(h^\alpha_{(\xi)(\xi)}(x)+h^\alpha_{(\eta)}(x)+4rh^\alpha_{(\xi)}(x)+2\hat rh^\alpha(x))|\\
\le&|\frac{h^\alpha(z)-2h^\alpha(x)+h^\alpha(z')}{\epsilon^2}-(h^\alpha_{(\xi)(\xi)}(x)+h^\alpha_{(\eta)}(x))|\\
&+2|r||\frac{h^\alpha(z)-h^\alpha(z')}{\epsilon}-2h^\alpha_{(\xi)}(x)|+|\hat r||h^\alpha(z)+h^\alpha(z')-2h^\alpha(x)|\\
\le&|h^\alpha_{(\frac{z-2x+z'}{\epsilon^2}-\eta)}(x)|+\frac{1}{2}\Big[|h^\alpha_{(\frac{z-x}{\epsilon})(\frac{z-x}{\epsilon})}(z^*)-h^\alpha_{(\xi)(\xi)}(x)|+|h^\alpha_{(\frac{z'-x}{\epsilon})(\frac{z'-x}{\epsilon})}({z^*}')-h^\alpha_{(\xi)(\xi)}(x)|\Big]\\
&+2|r|\Big[|\frac{h^\alpha(z)-h^\alpha(x)}{\epsilon}-h^\alpha_{(\xi)}(x)|+|\frac{h^\alpha(z')-h^\alpha(x)}{-\epsilon}-h^\alpha_{(\xi)}(x)|\Big]\\
&+|\hat r|\Big[|h^\alpha(z)-h^\alpha(x)|+|h^\alpha(z')-h^\alpha(x)|\Big].
\end{align*}

Second, by noticing that
$$\frac{\hat p_t(\epsilon)-2\hat p_t(0)+\hat p_t(-\epsilon)}{\epsilon^2}=\int_0^t\Big(\frac{\hat p_s(\epsilon)-\hat p_s(-\epsilon)}{\epsilon}\pi_s+\frac{\hat p_s(\epsilon)+\hat p_s(-\epsilon)}{2}\hat \pi_s\Big)dw_s,$$
$$\eta_t^{d+2}=\eta_t^0=\int_0^t(2\xi_s^0\pi_s+\hat\pi_x)dw_s,$$
we have
\begin{align*}
&E^\alpha\sup_{t\le\gamma}|\frac{\hat p_t(\epsilon)-2\hat p_t(0)+\hat p_t(-\epsilon)}{\epsilon^2}-\eta_t^{d+2}|^p\\
\le&N(p)E^\alpha\bigg(\int_0^\gamma \Big(\frac{\hat p_t(\epsilon)-\hat p_t(-\epsilon)}{\epsilon}-2\xi_t^0\Big)^2|\pi_t|^2+\Big(\frac{\hat p_t(\epsilon)+\hat p_t(-\epsilon)}{2}-1\Big)^2|\hat\pi_t|^2dt\bigg)^{p/2}\\
\le&N(p)E^\alpha\bigg(\epsilon^{-p}\sup_{t\le\gamma}|\frac{\hat p_t(\epsilon)-\hat p_t(-\epsilon)}{\epsilon}-2\xi_t^0|^{2p}+\epsilon^{-p}\sup_{t\le\gamma}|\frac{\hat p_t(\epsilon)+\hat p_t(-\epsilon)}{2}-1|^{2p}\\
&+\epsilon^p\int_0^\gamma|\pi_t|^{2p}dt+\epsilon^p\int_0^\gamma|\hat\pi_t|^{2p}dt\bigg)\\
\le&\epsilon^pN(p)E^\alpha\bigg(\int_0^\gamma \hat p^{2p}_t(\epsilon)|\pi_t+\frac{\epsilon}{2}\hat\pi_t|^{2p}dt+\int_0^\gamma\hat p^{2p}_t(-\epsilon)|\pi_t+\frac{\epsilon}{2}\hat\pi_t|^{2p}dt\\
&+\int_0^\gamma|\pi_t+\frac{\epsilon}{2}\hat\pi_t|^{2p}dt+\int_0^\gamma|\pi_t|^{2p}dt+\int_0^\gamma|\hat\pi_t|^{2p}dt\bigg).
\end{align*}
Therefore, 
$$\lim_{\epsilon\downarrow0}\sup_{\alpha\in\mathfrak A}E^\alpha\sup_{t\le\gamma}|\frac{\hat p_t(\epsilon)-2\hat p_t(0)+\hat p_t(-\epsilon)}{\epsilon^2}-\eta_t^{d+2}|^p=0.$$

In order to estimate $G_2(\epsilon,n, T)$, we notice that $V_{(\bar\eta^{\alpha,0}_t)}(\bar x^{\alpha,x}_t)+V_{(\bar\xi^{\alpha,\xi}_t)(\bar\xi^{\alpha,\xi}_t)}(\bar x^{\alpha,x}_t)$ is exactly $Y_t^\alpha$ defined by (2.10) in \cite{quasi-linear}, in which $u$ is replaced by $v$, that is
\begin{align*}
&V_{(\bar\eta^{\alpha,0}_t)(\bar x^{\alpha,x}_t)}+V_{(\bar\xi^{\alpha,\xi}_t)(\bar\xi^{\alpha,\xi}_t)}(\bar x^{\alpha,x}_t)=Y_t^\alpha\\
:=&e^{-\phi_t^{\alpha,x}}\Big[v_{(\xi_t^{\alpha,\xi})(\xi_t^{\alpha,\xi})}(x_t^{\alpha,x})+v_{(\eta_t^{\alpha,0})}(x_t^{\alpha,x})+2\tilde{\xi}_t^0v_{(\xi_t^{\alpha,\xi})}(x_t^{\alpha,x})+\tilde{\eta}_t^0v(x_t^{\alpha,x})\Big]\\
&+\int_0^te^{-\phi_s^{\alpha,x}}\Big[f^{\alpha_s}_{(\xi_s^{\alpha,x})(\xi_s^{\alpha,\xi})}(x_s^{\alpha,x})+f^{\alpha_s}_{(\eta_s^{\alpha,0})}(x_s^{\alpha,x})+\big(4r^\alpha_s+2\tilde{\xi}_s^0\big)f^{\alpha_s}_{(\xi_s^{\alpha,\xi})}(x_s^{\alpha,x})\\
&\qquad\qquad\qquad+\big(2\hat{r}^\alpha_s+4\tilde{\xi}_s^0r^\alpha_s+\tilde{\eta}_s^0\big)f^{\alpha_s}(x_s)\Big]ds,
\end{align*}
where
\begin{align*}
\tilde{\eta}_t^0=\eta_t^{d+2}+2\xi_t^{d+2}\xi_t^{d+1}+(\xi_t^{d+1})^2+\eta_t^{d+1}.
\end{align*}

As in the proof of (3.5) in \cite{quasi-linear}, for each $\alpha$,
\begin{align*}
&E\sup_{t\le\tau^{\alpha,x}_{D_\delta^\lambda}}\big(Y^\alpha_t-e^{-\phi_t^{\alpha,x}}v_{(\xi_t^{\alpha,\xi})(\xi_t^{\alpha,\xi})}(x_t^{\alpha,x})\big)\\
=&e^{-\phi_t^{\alpha,x}}\Big[v_{(\eta_t^{\alpha,0})}(x_t^{\alpha,x})+2\tilde{\xi}_t^0v_{(\xi_t^{\alpha,\xi})}(x_t^{\alpha,x})+\tilde{\eta}_t^0v(x_t^{\alpha,x})\Big]\\
&+\int_0^te^{-\phi_s^{\alpha,x}}\Big[f^{\alpha_s}_{(\xi_s^{\alpha,x})(\xi_s^{\alpha,\xi})}(x_s^{\alpha,x})+f^{\alpha_s}_{(\eta_s^{\alpha,0})}(x_s^{\alpha,x})+\big(4r^\alpha_s+2\tilde{\xi}_s^0\big)f^{\alpha_s}_{(\xi_s^{\alpha,\xi})}(x_s^{\alpha,x})\\
&\qquad\qquad\qquad+\big(2\hat{r}^\alpha_s+4\tilde{\xi}_s^0r^\alpha_s+\tilde{\eta}_s^0\big)f^{\alpha_s}(x_s)\Big]ds\\
\le&N\Big(|g|_{0,D}+|f^\alpha|_{2,D}+\sup_{x\in\partial D_\delta^\lambda,|\zeta|=1}|v_{(\zeta)}(x)|\Big)\\
&\cdot\Big(E\sup_{t\le\tau^{\alpha,x}_{D_\delta^\lambda}}|\eta_t|+E\sup_{t\le\tau^{\alpha,x}_{D_\delta^\lambda}}|\xi_t|^2+E\sup_{t\le\tau^{\alpha,x}_{D_\delta^\lambda}}|\xi^0_t|^2+ E\sup_{t\le\tau^{\alpha,x}_{D_\delta^\lambda}}e^{-\frac{1}{2}t}|\xi_t^{d+1}|^2\\
&\ \ \ \ +E\sup_{t\le\tau^{\alpha,x}_{D_\delta^\lambda}}e^{-\frac{1}{2}t}|\eta_t^{d+1}|+E\int_0^{\tau^{\alpha,x}_{D_\delta^\lambda}}r_s^2+\hat{r}_sds\Big)
\end{align*}
where $N$ is independent of $\alpha$.
Repeat the estimates (3.30)-(3.35) in \cite{quasi-linear}, we have
$$E\sup_{t\le\tau^{\alpha,x}_{D_\delta^\lambda}}\big(Y^\alpha_t-e^{-\phi_t^{\alpha,x}}v_{(\xi_t^{\alpha,\xi})(\xi_t^{\alpha,\xi})}(x_t^{\alpha,x})\big)\le N_1\mathrm B_1(x,\xi),$$
with
$$N_1=N\Big(|g|_{2,D}+\sup_{\alpha}|f^\alpha|_{2,D}+\sup_{x\in\partial D_\delta^\lambda,|\zeta|=1}|v_{(\zeta)}(x)|\Big),$$
where $N$ is independent of $\alpha$. Hence
$$G_2(\epsilon,T)\le\sup_{\alpha\in\mathfrak A}E\Big(-e^{-\phi_t^{\alpha,x}}v_{(\xi_t^{\alpha,\xi})(\xi_t^{\alpha,\xi})}(x_t^{\alpha,x})\Big)+N_1\mathrm B_1(x,\xi).$$

By mimicking the argument in the proof of (\ref{4d}), we have
$$\varlimsup_{T\uparrow\infty}\varlimsup_{\epsilon\downarrow0}\sup_{\alpha\in\mathfrak A}E\Big(-e^{-\phi_t^{\alpha,x}}v_{(\xi_t^{\alpha,\xi})(\xi_t^{\alpha,\xi})}(x_t^{\alpha,x})\Big)\le\bigg(\sup_{y\in\partial D_\delta^\lambda, \zeta\in\Rd\setminus\{0\}}\frac{(-v)_{(\zeta)(\zeta)}(y)_+}{\mathrm B_1(x,\zeta)}+3\bigg)\mathrm B_1(x,\xi),$$
where
$$(-v)_{(\zeta)(\zeta)}(y)_+=(-v)_{(\zeta)(\zeta)}(y)\vee0.$$

So we conclude that
$$\varlimsup_{T\uparrow\infty}\varlimsup_{\epsilon\downarrow0}G_2(\epsilon, T)\le\sup_{y\in\partial D_\delta^\lambda,\zeta\in\Rd\setminus\{0\}}\frac{(-v)_{(\zeta)(\zeta)}(y)_+}{\mathrm B_1(y,\zeta)}\cdot\mathrm B_1(x,\xi)+N_1\mathrm B_1(x,\xi),$$
which implies that
\begin{equation}\label{ind1'}
\frac{(-v)_{(\xi)(\xi)}(x)_+}{\mathrm B_1(x,\xi)}\le\sup_{y\in\partial D_\delta^\lambda,\zeta\in\Rd\setminus\{0\}}\frac{(-v)_{(\zeta)(\zeta)}(y)_+}{\mathrm B_1(y,\zeta)}+N_1,\ \forall x\in D_{\delta}^\lambda,\xi\in\Rd\setminus\{0\}.
\end{equation}
Repeating the argument above for $D_{\lambda^2}$, we have
\begin{equation}\label{ind2'}
\frac{(-v)_{(\xi)(\xi)}(x)_+}{\mathrm B_1(x,\xi)}\le\sup_{y\in\partial D_{\lambda^2},\zeta\in\Rd\setminus\{0\}}\frac{(-v)_{(\zeta)(\zeta)}(y)_+}{\mathrm B_1(y,\zeta)}+N_1,\ \forall x\in D_{\lambda^2},\xi\in\Rd\setminus\{0\}.
\end{equation}
Since (\ref{ind1'}) and (\ref{ind2'}) are similar as (3.36) and (3.38) in \cite{quasi-linear}, by repeating the argument after (3.38) in \cite{quasi-linear}, we get
$$
(-v)_{(\xi)(\xi)}(x)_+\le N\bigg(|\xi|^2+\frac{\psi_{(\xi)}^2(x)}{\psi(x)}\bigg), \ \ \mbox{ a.e. in }D.
$$
The inequality (\ref{4dd}) is proved.
\end{proof}

\begin{proof}[Proof of (\ref{4ddd})]

Fix an $x\in D$. For simplicity of notation we will drop the argument $x$ through the proof below.

From (\ref{4dd}) we have
$$v_{(\xi)(\xi)}+N\bigg(|\xi|^2+\frac{\psi_{(\xi)}^2}{\psi}\bigg)\ge 0, \forall\xi\in\Rd.$$
It follows that
$$v_{(\xi)(\xi)}+\frac{N}{\psi}|\xi|^2\ge0, \forall\xi\in\Rd.$$
Let
$$
V=v_{xx}+(\frac{N}{\psi}+1)I,
$$
where $I$ is the identity matrix of size $d\times d$.

Then we have
$$
\big(V\xi,\xi\big)\ge|\xi|^2>0, \forall\xi\in\Rd\setminus\{0\}.
$$
Fix a $\xi\in\Rd$ such that $\mu(\xi)>0$. Introduce
$$
\kappa=\sqrt{V}\xi, \qquad \theta=|\kappa|^{-2}\kappa,\qquad \zeta=\sqrt{V}\theta.
$$
Then
\begin{align*}
\mathrm{tr}(a^\alpha V)&=\mathrm{tr }(\sqrt{V}a^\alpha \sqrt{V})\\
&\ge|\theta|^{-2}(\sqrt{V}a^\alpha \sqrt{V}\theta,\theta)=|\kappa|^2(a^\alpha\zeta,\zeta)=(V\xi,\xi)(a^\alpha\zeta,\zeta).
\end{align*}
Taking the supremum and noticing that $(\xi,\zeta)=(\kappa,\theta)=1$, we get
$$\sup_{\alpha\in A}\mathrm{tr}(a^\alpha V)\ge (V\xi,\xi)\sup_{\alpha\in A}(a^\alpha\zeta,\zeta)\ge(V\xi,\xi)\mu(\xi).$$
It follows that
\begin{align*}
v_{(\xi)(\xi)}\le (V\xi,\xi)&\le \mu^{-1}(\xi)\sup_{\alpha\in A}\mathrm{tr}(a^\alpha V)\\
&\le\mu^{-1}(\xi)\Big[\sup_{\alpha\in A}\mathrm{tr}(a^\alpha v_{xx})+\frac{N}{\psi}\sup_{\alpha\in A}\mathrm{tr}(a^\alpha)\Big].
\end{align*}
Notice that
$$\mu(\xi)=|\xi|^{-2}\mu(\xi/|\xi|),$$
so it remains to estimate $\sup_{\alpha\in A}\mathrm{tr}(a^\alpha v_{xx})$ from above.
The equation
$$\sup_{\alpha\in A}\big[L^\alpha v-c^\alpha v+f^\alpha\big]=0$$
implies that
$$L^\alpha v-c^\alpha v+f^\alpha\le0, \forall \alpha\in A.$$
Thus
$$\mathrm{tr}(a^\alpha v_{xx})=(a^\alpha)^{ij}v_{x^ix^j}\le|(b^\alpha)^i|_{0,D}|v_{x^i}|_{0,D}+|c^\alpha|_{0,D}|v|_{0,D}+|f^\alpha|_{0,D}\le K.$$

\end{proof}

\begin{proof}[Proof of the existence and uniqueness of (\ref{bellmanae})]
The fact that $v$ given by (\ref{v}) and (\ref{vax}) satisfies (\ref{bellmanae}) follows from Theorem 1.3 in \cite{MR617995}.

To proof the uniqueness, assume that $v_1, v_2\in C_{loc}^{1,1}(D)\cap C^{0,1}(\bar D)$ are solutions of (\ref{bellmanae}).  Let $\Lambda=|v_1|_{0,D}\vee|v_2|_{0,D}$. For constants $\delta$ and $\ve$ satisfying $0<\delta<\ve<1$, define
$$\Psi(x,t)=\ve(1+\psi(x))\Lambda e^{-\delta t},\ V(x,t)=v(x) e^{-\ve t}\mbox{ in }\bar D\times(0,\infty),$$
$$F[V]=\sup_{\alpha\in A}(V_t+L^\alpha V-c^\alpha V+f^\alpha) \mbox{ in } D\times (0,\infty).$$

Notice that a.e. in $D$, we have
$$F[V_1-\Psi]\ge-\ve e^{-\ve t}v_1+\delta\Psi-\ve\Lambda e^{-\delta t}\sup_{\alpha}L^\alpha\psi+\inf_{\alpha}c^\alpha\Psi\ge\ve\Lambda(e^{-\delta t}-e^{-\ve t})\ge0, $$
$$F[V_2+\Psi]\le\ve e^{-\ve t}v_2-\delta\Psi+\ve\Lambda e^{-\delta t}\sup_{\alpha}L^\alpha\psi-\inf_\alpha c^\alpha\Psi\le\ve\Lambda(e^{-\ve t}-e^{-\delta t})\le0.$$
On $\partial D\times (0,\infty)$, we have
$$V_1-V_2-2\Psi=-2\Psi\le0.$$
On $\bar D\times T$, where $T=T(\ve,\delta)$ is a sufficiently large constant, we have
$$V_1-V_2-2\Psi=(v_1-v_2)e^{-\ve T}-2\ve(1+\psi)\Lambda e^{-\delta T}\le2\Lambda(e^{-\ve T}-\ve e^{-\delta T})\le 0.$$

Applying Theorem 1.1 in \cite{MR538554}, we get
$$V_1-V_2-2\Psi\le 0 \mbox { a.e. in } \bar D\times(0,T).$$
It follows that
$$v_1-v_2\le 2\ve(1+\psi)\Lambda e\rightarrow0, \mbox{ as } \ve\rightarrow0, \mbox{ a.e. in }D.$$
Similarly, $v_2-v_1\le0$ a.e. in $D$. The uniqueness is proved.

\end{proof}

\section*{Acknowledgement} The author is sincerely grateful to his advisor, N. V. Krylov, for giving many useful suggestions on the improvements. The author also would like to thank the referee for pointing out several misprints and mistakes and giving comments on the manuscript of this article.


\end{document}